\newcolumntype{P}[1]{>{\centering\arraybackslash}p{#1}}
\newcolumntype{N}{@{}m{0pt}@{}}
\numberwithin{equation}{section}
\let\equation=\gather
\let\endequation=\endgather
\renewcommand*{\@fnsymbol}[1]{\ensuremath{\ifcase#1\or 1\or 2\or
   3\else\@ctrerr\fi}}
\newtheorem{theorem}{Theorem}[section]
\newtheorem{lemma}[theorem]{Lemma}
\newtheorem{proposition}[theorem]{Proposition}
\theoremstyle{definition}
\newtheorem{definition}[theorem]{Definition}
\newtheorem{example}[theorem]{Example}
\theoremstyle{remark}
\newtheorem{remark}[theorem]{Remark}
\DeclareMathOperator*{\esssup}{ess\,sup}
\DeclareMathOperator*{\essinf}{ess\,inf}
\newcommand{\R}{\mathbb{R}}
\newcommand{\X}{{\mathbb{R}^d}}
\newcommand{\N}{\mathbb{N}}
\newcommand{\eps}{\varepsilon}
\newcommand{\la}{\lambda}
\newcommand{\La}{\Lambda}
\newcommand{\ga}{\gamma}
\newcommand{\ka}{\varkappa}
\newcommand\F[1][]{\mathcal{F}_{#1}}
\newcommand{\tomega}{{\widetilde{\upomega}}}
\newcommand\Dt{{\mathcal{D}_{d}}}
\newcommand\m[1]{{\langle #1\rangle}}
\newcommand\Et[1][d]{\mathcal{E}_{#1}}
\newcommand{\1}{1\!\!1}
\newcommand{\normom}[1]{\lVert #1\rVert_\tomega}
\newcommand\dt{{\dfrac{\partial}{\partial t}}}
\newcommand{\locun}{\xRightarrow{\,\mathrm{loc}\ }}
\let\emptyset=\varnothing
\newcounter{assum}
\newenvironment{assum}{\refstepcounter{assum}\let\myabovedisplayskip=\abovedisplayskip\abovedisplayskip=8pt plus 2pt minus 2pt\let\mybelowdisplayskip=\belowdisplayskip\belowdisplayskip=8pt plus 2pt minus 2pt\equation\tag{\ensuremath{\mathrm{A}\theassum}}}{\endequation\let\abovedisplayskip=\myabovedisplayskip\let\belowdisplayskip=\mybelowdisplayskip}
\title{Accelerated front propagation for monostable equations with~nonlocal~diffusion: Multidimensional case}
\author{Dmitri Finkelshtein\thanks{Department of Mathematics,
Swansea University, Singleton Park, Swansea SA2 8PP, U.K. ({\tt d.l.finkelshtein@swansea.ac.uk}).} \and Yuri Kondratiev\thanks{Fakult\"{a}t
f\"{u}r Mathematik, Universit\"{a}t Bielefeld, Postfach 110 131, 33501 Bielefeld,
Germany ({\tt kondrat@math.uni-bielefeld.de}).} \and Pasha Tkachov\thanks{Gran Sasso Science Institute, Viale Francesco Crispi, 7, 67100 L'Aquila AQ, Italy ({\tt pasha.tkachov@gssi.it}).}}
\begin{document}

\maketitle

\begin{abstract}
We describe acceleration of the front propagation for solutions to a class of monostable nonlinear equations with a nonlocal diffusion in $\X$, $d\geq1$. We show that the acceleration takes place if either the diffusion kernel or the initial condition has `regular' heavy tails in $\X$ (in particular, decays slower than exponentially). Under general assumptions which can be verified for particular models, we present sharp estimates for the time-space zone which separates the region of convergence to the unstable zero solution with the region of convergence to the stable positive constant solution.  We show the variety of different possible rates of the propagation starting from a little bit faster than a linear one up to the exponential rate.  The paper generalizes to the case $d>1$ our results for the case $d=1$ obtained early~in~\cite{FT2017c}.

\textbf{Keywords: }
nonlocal diffusion, reaction-diffusion equation, front propagation, acceleration, monostable equation, nonlocal nonlinearity, long-time behavior, integral equation

\textbf{2010 Mathematics Subject Classification:} 35B40, 35K57, 47G20, 45G10  
\end{abstract}

\tableofcontents

\section{Introduction}

\subsection{Object of study}
The present paper is aimed to study the accelerated propagation of the front for a non-negative solutions $u:\X\times\R_+\to\R_+:=[0,\infty)$, $d\geq1$, to the equation
\begin{equation}\label{eq:basicequation}
\begin{cases}
\dt u(x,t) = \ka \displaystyle \int_\X a(x-y)u(y,t)\,dy-m u(x,t)-u(x,t) (Gu)(x,t), \\[2mm]
u(x,0)=u_0(x)
\end{cases}
\end{equation}
in the space $E:=L^\infty(\X,dx)$ with the standard $\esssup$-norm.
Here $\ka, m >0$ are constants; $a$ is an (essentially) bounded probability kernel on $\X$, i.e. 
\begin{equation*}
  0\leq a\in L^1(\X,dx)\cap E, \qquad \int_\X a(x)\,dx=1;
\end{equation*}
and $G$ is a nonnegative continuous mapping on $E$ which is acting in $x$, i.e. 
$ (Gu)(x,t):=\bigl(Gu(\cdot,t)\bigr)(x)\geq0$ for $u\geq0$. 
 Here and below, we write $v\leq w$ for $v,w\in E$, if $v(x) \leq w(x)$ for almost all (a.a. in the sequel) $x\in \X$. Moreover, we will often just write $x\in\X$ omitting `for a.a.' before this.

By a solution to \eqref{eq:basicequation} on $\R_+$, we will understand the so-called classical solution, that is a mapping $u:\R_+\to E$ which is continuous in $t\in\R_+$ and continuously differentiable (in the sense of the norm in $E$) in $t\in(0,\infty)$.

We will assume that:
\begin{assum}\label{assum:kappa>m}
        \beta:=\ka-m>0;
\end{assum}
\vspace{-1.35\baselineskip}
\begin{assum}\label{assum:Gpositive}
        \begin{gathered}
        \textsl{there exists $\theta>0$ such that, for each } 0\leq v\leq\theta,\\
                0=G0 \leq Gv\leq G\theta=\beta.
        \end{gathered} 
\end{assum} 

As a result, $u\equiv0$ and $u\equiv\theta$ are stationary solutions to \eqref{eq:basicequation}. 
We will work under assumptions which ensure that
\begin{enumerate}[label=(\roman*)]
  \item there are not constant stationary solutions to \eqref{eq:basicequation} between $0$ and $\theta$; 
  \item $u\equiv0$ is an asymptotically unstable solution to \eqref{eq:basicequation}, whereas $u\equiv\theta$ is an asymptotically stable one;
  \item for a given $u_0\in E$ with $0\leq u_0\leq\theta$, there exists a unique solution to \eqref{eq:basicequation} such that 
    \begin{equation}\label{eq:intube}
        0\leq u(x,t)\leq\theta, \quad x\in\X, \ t>0;
    \end{equation} 
  \item the solution $u$ satisfies the comparison principle (see Section~\ref{sec:assumandresults} for details).
  \end{enumerate}
In particular, the equation \eqref{eq:basicequation} belongs to the class of the so-called monostable equations, see e.g. \cite{BH2002}.

The function $u(x,t)$ may be interpreted as the local density of an evolving in time system of entities which reproduce themselves, compete, and die. The~reproduction appears according to the dispersion, which is realized via the fecundity rate $\ka$ and the density $a$ of a probability dispersion  distribution. The~death may appear due the constant inner mortality $m>0$ within the system, as well as due to the density dependent rate $Gu$, which describes a competition within the system. 

One can also rewrite the equation \eqref{eq:basicequation} in the reaction-diffusion form
\begin{gather}\label{eq:RDequation}
\dfrac{\partial}{\partial t} u(x,t)=(Lu)(x,t)+(Fu)(x,t),\\
\shortintertext{where}\label{eq:Markovgenerator}
(Lu)(x,t):=\ka\int_\R a(x-y)\bigl(u(y,t)-u(x,t)\bigr)\,dy
\end{gather}
is the generator of a nonlocal diffusion in $\X$ (see e.g. \cite{AMRT2010,BCF2011,KMPZ2016,FT2017b}), and the reaction $F$ is given by
\begin{equation}
Fv:=v(\beta -Gv), \quad v\in E.		\label{eq:FthroughG}
\end{equation}
Then, under assumptions \eqref{assum:kappa>m}--\eqref{assum:Gpositive},
\begin{equation}\label{eq:propertiesofF}
F\theta=F0=0\leq Fv \leq \beta v, \quad 0\leq v\leq \theta.
\end{equation}

On the other hand, to rewrite a given equation in the form \eqref{eq:RDequation} to an equation of the form \eqref{eq:basicequation} with a continuous $G$ given by $Gv=\beta-\frac{Fv}{v}$, $v\in E$  (or, at least, a continuous $G$ at $0$ on $\{v\in E: 0\leq v\leq\theta\}$), we will need to require that the reaction $F$ in \eqref{eq:RDequation} is such that 
\begin{equation}\label{eq:nondegenerate}
  \frac{Fv}{v}\to \beta>0\quad \text{as}\quad v\to0+
\end{equation}
(both convergences are in $E$). Because of $F0=0$, we get then that the Fr\'echet derivative of $F$ must be a (strictly positive) constant mapping. In particular, we do not allow the degenerate reaction $F'(0)=0$, see e.g. \cite{AC2016}. Therefore, we consider a sub-class of monostable reaction-diffusion equations \eqref{eq:RDequation}. 

The solution $u$ to the equation \eqref{eq:RDequation} may be interpreted as a density of a species which invades according to a nonlocal diffusion within the space $\X$ meeting a reaction $F$, see e.g.
\cite{Fif1979,Mur2003}.

\subsection{Description of results}

We will say that sets $\{\La(t)\subset \X,\ t>0\}$ describe the propagation of the front for a solution $u=u(x,t)$ to \eqref{eq:basicequation} if, for all (small enough) $\eps>0$, the following convergences hold:
\begin{align}
   &\lim\limits_{t\to\infty}\essinf\limits_{x\in\La(t-\eps t)} u(x,t)=\theta,\label{eq:whatweareproving1}\\
   &\lim\limits_{t\to\infty}\esssup_{{x\in\X\setminus\La(t+\eps t)}} u(x,t)=0.\label{eq:whatweareproving2}
   \end{align}

Informally speaking, for large times, the solution $u$ becomes arbitrary close to $\theta$ inside the set $\La(t)$ and $u$ becomes arbitrary close to $0$ out of this set. The~intermediate zone $\La (t+\eps t)\setminus \La (t-\eps t)$ in \eqref{eq:whatweareproving1}--\eqref{eq:whatweareproving2} is said to be \emph{the front}, 
or \emph{the transition zone}.
It~can also expand as $t\to\infty$; moreover, in the accelerated case considered in the present article, it will be even with necessity, see \cite{HGR2017}.

The propagation \eqref{eq:whatweareproving1}--\eqref{eq:whatweareproving2} of the front, is said to have a constant speed (or just is linear in time), if $\La(t)=t\, \La(1) $. Here and below $tB:=\{tx:x\in B\}$ for a $B\subset\X$.
In contrast, the effect of an infinite speed of propagation, see~\cite{Yag2009,Gar2011,HGR2017}, is called sometimes in literature an acceleration of the propagation, having in mind, for example, that then $\La(t)=\eta(t)\, \La(1) $ with $\frac{\eta(t)}{t}\to\infty$, $t\to\infty$.

In the present paper the propagation will be described by the sets
\begin{equation}\label{eq:defLa}
    \La(t) = \La(t,c):=\bigl\{x\in\X\bigm\vert c(x)\geq e^{-\beta t}\bigr\}, \quad t>0,
 \end{equation}
where the function $c:\X\to(0,\infty)$ will be appropriately chosen below. 

We start the explanation from a more demonstrative radially symmetric case. Namely, let the kernel be a radially symmetric function:
\begin{equation}\label{eq:radkernel}
a (x)=b(|x|), \quad x\in\X;
\end{equation}
and let the initial condition take either of forms:
\begin{alignat}{2}\label{eq:intu0}
 u_0(x)&=q(|x|), && \quad x\in\X\\
\shortintertext{or}
\label{eq:monu0}
   u_0(x)&=\int_{\Delta(x)}q(|y|)dy, &&\quad x\in\X.
\end{alignat}
Here and in the sequel, $|x|$ denotes the Euclidean norm of an $x\in\X$, and 
\begin{equation}
    \Delta(x):=\bigl\{y\in\X: y_j\geq x_j, \ 1\leq j\leq d\bigr\}, \quad x\in\X. \label{eq:bigDelta}
\end{equation}

We suppose also that either of functions $b,q:\R_+\to\R_+$ (or both) in \eqref{eq:radkernel}--\eqref{eq:monu0} have regular heavy tails at $\infty$. Namely, we describe in Definition~\ref{def:super-subexp} below a class $\Et$ of functions $p:\R_+\to\R_+$ such that, in particular, 
\begin{equation*}
  \lim_{s\to\infty} e^{ks}p(s)=\infty, \quad k>0,
\end{equation*}
i.e. each $p\in\Et$ decays slowly than any exponential function.
The class $\Et $ contains any function which is decreasing at $\infty$ to $0$ and is 
asymptotically proportional at $\infty$ to either of
\begin{equation*}
\begin{aligned}
&(\log s)^\la s^{-(d+\delta)},\quad   && s^\nu (\log s)^\mu \exp\bigl(-c(\log s)^{1+\delta}\bigr), \\
&s^\nu (\log s)^\mu\exp\bigl(-c s^\gamma\bigr), \quad   
&& s^\nu (\log s)^\mu \exp\Bigl(-c\frac{s}{(\log s)^{1+\delta}}\Bigr),
\end{aligned}
\end{equation*}
provided that $\delta,c>0$, $\gamma\in(0,1)$, $\mu, \nu\in\R$, and $\la\in\R$ for $d=1$, however, $\la=0$ for $d>1$.

Moreover, to simplify the formulation of our results, we consider firstly
a particular choice of the reaction $F$ in the equation \eqref{eq:RDequation}.
Namely, for a fixed $\theta>0$, we consider the reaction $F$, given, for $u\in E$, by
\begin{equation}\label{eq:partciularF}
  (Fu)(x)=\alpha f\bigl(u(x)\bigr) + (1- \alpha)\,\frac{\beta}{\theta^k}\, u(x)\biggl(\theta - \int_\X a^-(x-y)u(y)\,dy\biggr)^k,
\end{equation}
with $\alpha\in[0,1]$, $f:\R\to\R$, $k\in\N$, $0\leq a^-\in L^1(\R)$, $\int_\R a^-(x)\,dx=1$. 
We will assume that
\begin{equation}\label{eq:coninlocalcase}
\begin{gathered}
        \frac{f(r)}{r} \textsl{ is Lipschitz continuous on } r\in[0,\theta]; \\
        \lim\limits_{r\to 0 +} \frac{f(r)}{r} = \beta;\\ 
        f(0)=f(\theta)=0;\qquad 0<f(r) \leq \beta r, \ \ r\in(0,\theta),
\end{gathered}
\end{equation}
and also that, for some $\varrho>0$,
\begin{equation}\label{eq:sepfromzerobeta}
            \ka a(x) \geq (1-\alpha) k\beta a^-(x) + \varrho\1_{B_\varrho(0)}(x),\qquad x\in\X.
\end{equation}
Note that \eqref{eq:partciularF}--\eqref{eq:coninlocalcase} ensure, in particular, that \eqref{eq:propertiesofF}--\eqref{eq:nondegenerate} hold.
\begin{theorem} \label{thm:radsym}
Let $b,q:\R_+\to\R_+$ be such that, for some \mbox{$M,r,\delta,\rho>0$},
 \begin{equation*}
b(s)+q(s)\leq \frac{M}{(1+s)^{d+1+\delta}} \quad \text{for a.a.~} s \geq r,
 \end{equation*}
and let $q(s)\geq \rho$ for a.a. $s\in[0,\rho]$. 
We assume also that either of the following conditions holds
\begin{align}
&\sup\limits_{s\in\R_+}\dfrac{q(s)}{b(s)}<\infty, \label{eq:alt1}
\\
&\sup\limits_{s\in\R_+}\dfrac{b(s)}{q(s)}<\infty. \label{eq:alt2}
\end{align}
Let $\ka,m >0$ and \eqref{assum:kappa>m} hold. Let $a=a(x)$ be given by \eqref{eq:radkernel}. Consider, for some $\theta>0$, a~reaction $F:E\to E$ given by \eqref{eq:partciularF}, and suppose that \eqref{eq:coninlocalcase}--\eqref{eq:sepfromzerobeta} hold.  
\begin{enumerate}
  \item Let $q:\R\to[0,\theta]$ and let $u_0=u_0(x)$ be given by \eqref{eq:intu0}.
 Then \eqref{eq:whatweareproving1}--\eqref{eq:whatweareproving2} hold with $\La(t)=\La(t,c)$ given by \eqref{eq:defLa}, where
  \begin{enumerate}[label=(\roman*)]
    \item $c= a $, if $b\in\Et $ and \eqref{eq:alt1} holds;
    \item $c=u_0$, if $q\in\Et $ and \eqref{eq:alt2} holds.
  \end{enumerate}
  \item Let $\displaystyle\int_0^\infty q(s) s^{d-1}ds\in(0,\theta]$ and let $u_0=u_0(x)$ be given by \eqref{eq:monu0}. 
  Then \eqref{eq:whatweareproving1}--\eqref{eq:whatweareproving2} hold with $\La(t)=\La(t,c)$ given by \eqref{eq:defLa}, where
  \begin{enumerate}[label=(\roman*)]
    \item $c(x):=\int_{\Delta(x)} a (y)dy$, $x\in\X$, if $b\in\Et $ and \eqref{eq:alt1} holds;
    \item $c=u_0$, if $q\in\Et $ and \eqref{eq:alt2} holds.
  \end{enumerate}
\end{enumerate}

If both \eqref{eq:alt1}--\eqref{eq:alt2} hold, then \eqref{eq:whatweareproving1}--\eqref{eq:whatweareproving2} take place with $\La(t)=\La(t,c)$ given by \eqref{eq:defLa} and constructed by either of the corresponding functions $c=c(x)$.
\end{theorem}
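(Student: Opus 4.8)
The plan is to derive this concrete result from the general machinery that the paper develops later (the general theorems referenced implicitly through the list (i)--(iv) in the introduction and the abstract). First I would verify the structural hypotheses: by \eqref{eq:coninlocalcase}--\eqref{eq:sepfromzerobeta} the reaction $F$ of the form \eqref{eq:partciularF} satisfies \eqref{eq:propertiesofF}--\eqref{eq:nondegenerate}, so the equation can be cast in the form \eqref{eq:basicequation} with a continuous $G$, assumptions \eqref{assum:kappa>m}--\eqref{assum:Gpositive} hold, and the comparison principle together with existence/uniqueness of a solution in the tube $0\le u\le\theta$ are available. The lower bound $\varrho\1_{B_\varrho(0)}$ in \eqref{eq:sepfromzerobeta} is exactly what guarantees the nonlocal diffusion "spreads mass" locally, which drives the convergence to $\theta$ in \eqref{eq:whatweareproving1}; I would isolate that as the ingredient feeding the lower estimate.

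Next I would treat the two propagation bounds separately. For the \emph{upper} bound \eqref{eq:whatweareproving2} the strategy is the classical one for nonlocal monostable equations: drop the competition term, so that $u$ is a subsolution of the linear equation $\dt v = \ka a * v - m v$, whose solution is $v(x,t)=e^{-\beta t}\sum_{n\ge0}\frac{(\ka t)^n}{n!}e^{-\ka t}a^{*n}*u_0$ (a Poissonian sum of convolution powers). One then shows that the function $c$ in each case (i.e. $c=a$, $c=u_0$, or the $\Delta$-integrated versions) is, up to constants and the regular-heavy-tail structure of the class $\Et$, \emph{reproduced} by convolution with $a$ (a submultiplicativity / "$a*c\le \mathrm{const}\cdot c$" type inequality valid precisely because $b$ or $q$ lies in $\Et$), so that $v(x,t)\le C\,e^{-\beta t}c(x)/\,(\text{small})$, whence $v\to0$ uniformly outside $\La(t+\eps t)=\{c\ge e^{-\beta(t+\eps t)}\}$. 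The condition \eqref{eq:alt1} or \eqref{eq:alt2} is used to compare $a$ and $u_0$ so that whichever of $b,q$ carries the heavy tail dominates the other; the polynomial majorant $(1+s)^{-(d+1+\delta)}$ ensures integrability and that the "light" partner does not spoil the tail asymptotics. For part 2, where $u_0$ is the $\Delta$-integral \eqref{eq:monu0}, the same argument is run with $c(x)=\int_{\Delta(x)}a(y)\,dy$; the key point is that $\Delta$-integration commutes appropriately with convolution and that the tail class $\Et$ is stable under this operation (this is presumably a lemma established earlier for the $d=1$ case and extended here).

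For the \emph{lower} bound \eqref{eq:whatweareproving1} I would build an explicit subsolution concentrated on a growing set. Using \eqref{eq:sepfromzerobeta}, on a ball of radius $\varrho$ the diffusion dominates, so a small bump grows and, by iterating the comparison principle with the nonlocal kernel, the region where $u$ is bounded below by a fixed positive constant expands at least as fast as $\La(t-\eps t)$; here one again invokes the heavy-tail reproduction property of $c$ to match the geometry of $\La(t,c)$. Then a standard ODE-comparison in the tube $[0,\theta]$ (the logistic-type lower bound coming from $Fv\ge (\text{const})v(\theta-v)$ near where $u$ is already bounded below) upgrades "bounded below by a constant" to "converges to $\theta$", giving \eqref{eq:whatweareproving1}. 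Finally, if both \eqref{eq:alt1} and \eqref{eq:alt2} hold then $a$ and $u_0$ are comparable, so the sets $\La(t,a)$ and $\La(t,u_0)$ (and their $\Delta$-versions) differ only by a bounded time shift, which is absorbed into the $\eps t$ slack; hence either choice of $c$ works. The main obstacle I anticipate is the reproduction estimate $a*c\le \mathrm{const}\cdot c$ (equivalently, controlling $a^{*n}*u_0$ uniformly in $n$) in dimension $d>1$: the heavy-tail class $\Et$ must be closed under $d$-dimensional convolution and under $\Delta$-integration, and the radial-to-Euclidean passage plus the restriction $\lambda=0$ for $d>1$ suggests this is exactly the delicate computation the paper's later lemmas are designed to handle; I would lean on those lemmas rather than redo the tail analysis from scratch.
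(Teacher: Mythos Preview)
Your plan is correct and matches the paper's approach: Theorem~\ref{thm:radsym} is deduced from the general Theorem~\ref{thm:combined} after checking, via Lemma~\ref{le:GisOK}, that the reaction \eqref{eq:partciularF} under \eqref{eq:coninlocalcase}--\eqref{eq:sepfromzerobeta} satisfies the abstract hypotheses \eqref{assum:Gpositive}--\eqref{assum:improved_sufficient_for_comparison}; the upper and lower bounds for Theorem~\ref{thm:combined} are then obtained exactly along the lines you sketch (linearisation plus a convolution-reproduction estimate for \eqref{eq:whatweareproving2}, an explicit sub-solution for \eqref{eq:whatweareproving1}).

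Two small corrections worth noting. First, in the Poissonian representation the prefactor is $e^{-mt}$ (equivalently $e^{\beta t}e^{-\ka t}$), not $e^{-\beta t}e^{-\ka t}$; and the reproduction estimate is not run with $c$ itself but with $c_\alpha(x)=b(|x|)^\alpha$ for $\alpha<1$ close to $1$, truncated at a small level~$\la$ (Propositions~\ref{prop:ba:suff_cond1}--\ref{prop:mainabove}), the $\alpha$-loss being absorbed into the $\eps t$ slack via Proposition~\ref{prop:sassaf}. Second, and more substantively, the step upgrading ``$u\ge\la$ on $\La(t-\eps t)$'' to ``$u\to\theta$'' is \emph{not} an ODE comparison: because the equation is genuinely nonlocal, a pointwise logistic bound $Fv\ge \mathrm{const}\cdot v(\theta-v)$ does not by itself force convergence to $\theta$. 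The paper instead covers $\La(t-\eps t)$ by translated unit balls and applies the hair-trigger Theorem~\ref{thm:hairtrigger} on each, which in turn relies on the additional structural assumptions \eqref{assum:first_moment_finite}--\eqref{assum:approx_of_basic}; you should invoke that result rather than an ODE argument.
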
 

Informally speaking, $c$ in \eqref{eq:whatweareproving1}--\eqref{eq:defLa} is  
\begin{center}
  either $a$ or $u_0$, whichever decays slower, if \eqref{eq:intu0} holds;\\[2mm]
  either $\int_{\Delta(x)}a$ or $u_0$, whichever decays slower, if \eqref{eq:monu0} holds.
\end{center} 

On the sketches below, we shade the sets $\La(t-\eps t)$ and $\X\setminus\La(t+\eps t)$ for both types of initial conditions \eqref{eq:intu0}--\eqref{eq:monu0}.
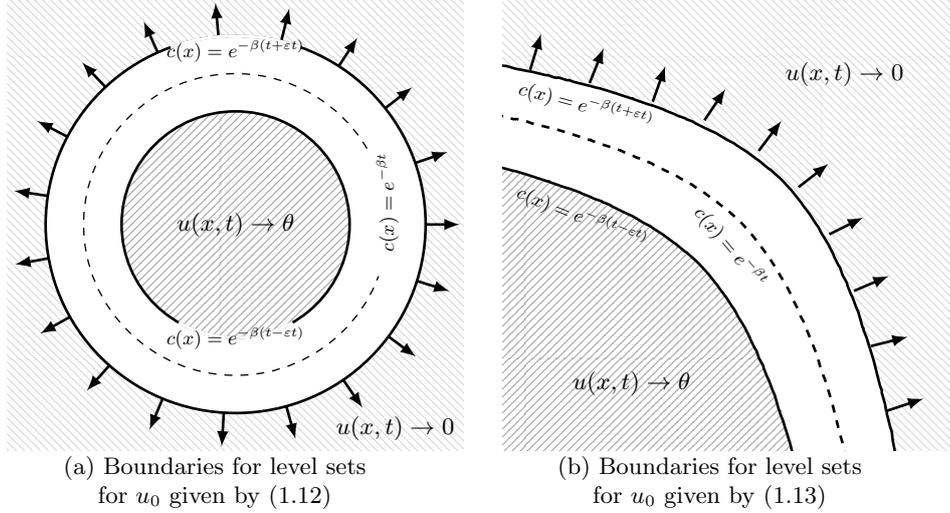
\begin{figure}[!htbp]
\vspace{-0.75\baselineskip}
\centering
\begin{subfigure}[t]{.45\linewidth}
  \centering 
    \begin{tikzpicture}[line width=1pt,every node/.style={font=\small},>=stealth]
        \fill[pattern=north west lines, pattern color=gray!30!white] (-3,-3) rectangle (3,3);
        \fill[white] (0,0) circle (2.5);
        \draw (0,0) circle (2.5);
        \draw[decorate,decoration={arrowborder,amplitude=0.45cm,angle=90,segment length=9.8mm}] (0,0) circle (2.95);
        \draw[dashed, line width=0.5pt] (0,0) circle (2);
        \fill[pattern=north east lines, pattern color=gray!70!white] (0,0) circle (1.5);
        \draw (0,0) circle (1.5);
        \draw[line width=1.2pt,white] ({2.5*cos(70)},{2.5*sin(70)}) arc (70:110:2.5);
        \draw[line width=1.2pt,white] ({1.5*cos(240)},{1.5*sin(240)}) arc (240:300:1.5);
        \draw[line width=1.2pt,white] ({2*cos(-20)},{2*sin(-20)}) arc (-20:30:2);
        \node at (2,0.3) {\rotatebox{90}{\scalebox{0.8}{$c(x)=e^{-\beta t}$}}};
        \node at (0,-1.5) {\scalebox{0.8}{$c(x)=e^{-\beta (t-\eps t)}$}};
        \node at (0,2.3) {\scalebox{0.8}{$c(x)=e^{-\beta (t+\eps t)}$}};
        \node at (0,0) {$u(x,t)\to\theta$};
        \node at (2.1,-2.7) {$u(x,t)\to0$};
    \end{tikzpicture}
    \caption{Boundaries for level sets\\ for $u_0$ given~by~\eqref{eq:intu0}}
\end{subfigure}\qquad\quad
\begin{subfigure}[t]{.45\linewidth}
\centering  
\begin{tikzpicture}[line width=1pt,every node/.style={font=\small},>=stealth]
\newcommand\cpath{(2.97, -2.26) -- (2.97, -2.22) -- (2.94, -2.07) -- (2.93, -2.01) -- (2.91, -1.91) -- (2.90, -1.86) -- (2.88, -1.76) -- (2.88, -1.75) -- (2.86, -1.65) -- (2.85, -1.60) -- (2.84, -1.50) -- (2.82, -1.44) -- (2.78, -1.28) -- (2.78, -1.25) -- (2.77, -1.24) -- (2.76, -1.17) -- (2.72, -1.03) -- (2.72, -1.00) -- (2.70, -0.95) -- (2.67, -0.83) -- (2.65, -0.75) -- (2.62, -0.63) -- (2.61, -0.60) -- (2.58, -0.50) -- (2.56, -0.44) -- (2.54, -0.37) -- (2.50, -0.27) -- (2.50, -0.25) -- (2.50, -0.25) -- (2.49, -0.24) -- (2.44, -0.06) -- (2.41, 0.01) -- (2.37, 0.12) -- (2.35, 0.16) -- (2.31, 0.26) -- (2.30, 0.30) -- (2.25, 0.39) -- (2.22, 0.48) -- (2.20, 0.51) -- (2.15, 0.61) -- (2.14, 0.64) -- (2.08, 0.76) -- (2.05, 0.81) -- (2.00, 0.89) -- (1.96, 0.97) -- (1.93, 1.01) -- (1.86, 1.12) -- (1.77, 1.24) -- (1.76, 1.26) -- (1.75, 1.27) -- (1.75, 1.27) -- (1.64, 1.41) -- (1.54, 1.52) -- (1.52, 1.54) -- (1.50, 1.56) -- (1.41, 1.64) -- (1.39, 1.66) -- (1.37, 1.67) -- (1.25, 1.77) -- (1.25, 1.77) -- (1.25, 1.77) -- (1.22, 1.79) -- (1.10, 1.88) -- (0.99, 1.95) -- (0.95, 1.98) -- (0.87, 2.02) -- (0.74, 2.10) -- (0.63, 2.15) -- (0.59, 2.17) -- (0.49, 2.22) -- (0.46, 2.24) -- (0.38, 2.27) -- (0.37, 2.28) -- (0.28, 2.31) -- (0.24, 2.33) -- (0.14, 2.37) -- (0.10, 2.39) -- (-0.01, 2.43) -- (-0.08, 2.45) -- (-0.25, 2.51) -- (-0.26, 2.52) -- (-0.27, 2.52) -- (-0.28, 2.52) -- (-0.46, 2.58) -- (-0.52, 2.60) -- (-0.62, 2.63) -- (-0.65, 2.64) -- (-0.70, 2.65) -- (-0.77, 2.67) -- (-0.85, 2.69) -- (-0.97, 2.72) -- (-1.02, 2.74) -- (-1.05, 2.74) -- (-1.14, 2.77) -- (-1.19, 2.77) -- (-1.25, 2.79) -- (-1.27, 2.80) -- (-1.30, 2.80) -- (-1.46, 2.84) -- (-1.52, 2.85) -- (-1.62, 2.87) -- (-1.67, 2.88) -- (-1.77, 2.91) -- (-1.88, 2.92) -- (-1.93, 2.93) -- (-2.03, 2.95) -- (-2.09, 2.96) -- (-2.24, 2.99) -- (-2.28, 2.99)}
   \begin{scope}   
        \clip (-2,-1.9) rectangle (4,4.1);  
        \begin{scope}[scale=1.1]
         \fill[pattern=north west lines, pattern color=gray!30!white] (-3,-3) rectangle (4,4);
        \fill[white] \cpath -- (-3,-3) -- cycle;
        \draw \cpath; 
        \draw[decorate,decoration={arrowborder,amplitude=0.5cm,angle=90,segment length=8.5mm},xshift=0.38cm,yshift=0.4cm] \cpath; 
        \fill[white,xshift=-0.5cm,yshift=-0.5cm] \cpath -- (-3,-3) -- cycle;
        \draw[xshift=-0.5cm,yshift=-0.5cm,dashed] \cpath;  
        \fill[pattern=north east lines, pattern color=gray!70!white,xshift=-1cm,yshift=-1cm] \cpath -- (-3,-3) -- cycle;
        \draw[xshift=-1cm,yshift=-1cm] \cpath;
         \end{scope} 
        \node at (1.0,0.8) {\rotatebox{-50}{\scalebox{0.8}{$c(x)=e^{-\beta t}$}}};
        \node at (-0.95,1.2) {\rotatebox{-22}{\scalebox{0.8}{$c(x)=e^{-\beta (t-\eps t)}$}}};
        \node at (-0.9,2.68) {\rotatebox{-15}{\scalebox{0.8}{$c(x)=e^{-\beta (t+\eps t)}$}}};
        \node at (-0.3,-1.0) {$u(x,t)\to\theta$};
        \node at (2.5,3.1) {$u(x,t)\to0$};
    \end{scope}
    \end{tikzpicture}
      \caption{Boundaries for level sets\\ for $u_0$ given~by~\eqref{eq:monu0}}
\end{subfigure}

\vspace{-0.75\baselineskip}
\caption{Boundaries for level sets $\La(t)$, $\La(t-\eps t)$, $\La(t+\eps t)$\\ for two classes of~initial~conditions~$u_0$ (in $\R^2$);\\ the arrows show the directions of the propagation}\label{fig:frontsimple}
\end{figure} 

In Section~\ref{sec:assumandresults} below, we generalize Theorem~\ref{thm:radsym} by considering, instead of \eqref{eq:partciularF}, a class of reactions in \eqref{eq:RDequation} which satisfy \eqref{eq:propertiesofF}--\eqref{eq:nondegenerate}. Moreover, we weaken the assumption that the kernel and the initial condition in \eqref{eq:radkernel}--\eqref{eq:monu0} are constructed by radially symmetric functions, by allowing that each of $a=a(x)$ and $u_0=u_0(x)$ may fluctuate in an appropriate way. To demonstrate possible fluctuations, we present the  examples below, which correspond to different growths of $\eta(t)$ in $\La(t)=\eta(t)\La(1)$ for \eqref{eq:whatweareproving1}--\eqref{eq:whatweareproving2}.

\begin{example}
We start with the case when $u_0\in L^1(\X)$, cf.~\eqref{eq:intu0}. Then $c$ in \eqref{eq:whatweareproving1}--\eqref{eq:defLa} will be chosen in the form $c(x)=b(|x|)$, $x\in\X$, with a decreasing at $\infty$ function $b:\R_+\to\R_+$. In this case, the set \eqref{eq:defLa} is given by
\begin{equation*}\label{eq:explicitLa}
  \La(t,c)=\bigl\{x\in\X \bigm\vert |x|\leq \eta(t)\}, \qquad \eta(t):=b^{-1}\bigl(e^{-\beta t}\bigr)
\end{equation*}
for large enough $t$ (to invert $b$). We assume that $b_{\max{}},b_{\min{}}:\R_+\to\R_+$ and $r>0$ are such that the following two conditions hold:
\begin{gather*}
\max\{a (x),u_0(x)\}\leq b_{\max{}}(|x|), \ \ |x|\geq r;\\
\text{either} \ \  a (x)\geq b_{\min{}}(|x|), \ \ |x|\geq r, \qquad \text{or}\ \ 
u_0 (x)\geq b_{\min{}}(|x|), \ \ |x|\geq r.
\end{gather*}
Then \eqref{eq:whatweareproving1}--\eqref{eq:whatweareproving2} hold with $\La(t)=\La(t,c)$ given by \eqref{eq:explicitLa}, where $\eta(t)$ can be found from the following table:
\begin{center}
\begin{tabular}{|P{0.35\textwidth}|P{0.35\textwidth}|P{0.17\textwidth}|N} 
\hline
$b_{\min{}}(|x|)$ & $b_{\max{}}(|x|)$ & $\eta(t)$ & \\[12pt]\hline
$\dfrac{1}{(\log |x|)^{\nu}} \dfrac{1}{|x|^{d+\mu}}$ & $(\log |x|)^{\nu} \dfrac{1}{|x|^{d+\mu}}$ & $\exp\Bigl(\dfrac{\beta t}{d+\mu}\Bigr)$& \\[23pt]\hline
$\dfrac{1}{|x|^{\nu}} \exp\bigl(-(\log |x|)^{\la}\bigr)$ & $|x|^\nu \exp\bigl(-(\log |x|)^{\la}\bigr)$ & $\exp\bigl((\beta t)^{\frac{1}{\la}}\bigr) $ & \\[23pt]\hline
$\dfrac{1}{|x|^{\nu}} \exp(-|x|^\gamma)$ & $|x|^{\nu} \exp(-|x|^\gamma)$ & 
$(\beta t)^{\frac{1}{\gamma}}$ & \\[23pt]\hline
$\dfrac{1}{|x|^{\nu}} \exp\Bigl(-\dfrac{|x|}{(\log |x|)^\la}\Bigr)$ & $|x|^\nu \exp\Bigl(-\dfrac{|x|}{(\log |x|)^\la}\Bigr)$ & $\substack{\displaystyle\sim \beta t (\log t)^\la\\[1mm]\displaystyle t\to\infty}$ & \\[23pt]\hline
\end{tabular} 
\end{center}
Here $\nu\geq 0$, $\mu>0$, $\la>1$, $\gamma\in(0,1)$. For the first three cases, the calculation of $\eta(t)$ is straightforward; in the last case, the asymptotic of $\eta(t)$ is shown in the Appendix, Lemma~\ref{le:almostlinear}.
\end{example}

In the following two examples, we consider the case of a non-integrable $u_0$, cf.~\eqref{eq:monu0}. 

\begin{example}
Let $d=1$. Then  \eqref{eq:whatweareproving1}--\eqref{eq:defLa} hold with
\[
  \La(t)= [\eta(t),\infty), \qquad \eta(t):=c^{-1}\bigl(e^{-\beta t}\bigr), \qquad c(x)=\int_x^\infty b(y)dy
\]
for large $t$ and $x$, and for a decreasing at $\infty$ function $b\in L^1(\R_+\to\R_+)$.

For example, let, for some $\mu>0$ and $\nu\geq0$,
\begin{equation*}
\dfrac{1}{(\log |x|)^{\nu}} \dfrac{1}{|x|^{1+\mu}}\leq a(x) \leq
  (\log |x|)^{\nu} \dfrac{1}{|x|^{1+\mu}}
\end{equation*}
for large $|x|$; and suppose, for simplicity, that $u_0$ is monotone on $\R$ and, for some $\zeta\in(0,\theta)$,
\[
  \zeta\1_{\R_-}(x)\leq u_0(x)\leq \zeta \int_x^\infty  a (y)dy\leq \theta, \quad x\in\R. 
\]
Then one can choose $b(s)=s^{-1-\mu}$ (for large $s$), and then, for large $t$, 
\begin{equation*}
  \eta(t)=\exp\Bigl(\frac{\beta t}{\mu}\Bigr),
\end{equation*}
i.e. the front propagates a bit faster than in the case of $u_0\in L^1(\R)$.
\end{example}

\begin{example}
Let $d=2$. Then the boundary of $\La(t)$ is described, cf.~Figure~\ref{fig:frontsimple}, for each direction $(\xi_1,\xi_2)\in S^1$ (the unit circle in $\R^2$), by two functions, $X_1(t)=X_1(t,\xi_1,\xi_2)$ and $X_2(t)=X_2(t,\xi_1,\xi_2)$, such that
\begin{equation}\label{eq:frontinR2}
\int_{X_1(t)}^\infty\int_{X_2(t)}^\infty b(|y|)\,dy_1\,dy_2=e^{-\beta t}, \qquad |y|=\sqrt{y_1^2+y_2^2},
\end{equation}
for a decreasing at $\infty$ function $b:\R_+\to\R_+$, such that $\int_{\R_+} b(r)r\,dr<\infty$. Consider the acceleration of the front propagation along the diagonal direction $\xi_1=\xi_2=\frac{1}{\sqrt{2}}$; we set then $X(t):=X_1(t)=X_2(t)$.

Let, for example, for $\nu\geq0$,
\begin{equation}\label{eq:weibexmon1}
\dfrac{1}{|x|^{\nu}} \exp(-\sqrt{|x|})\leq a(x) \leq |x|^\nu  \exp(-\sqrt{|x|}),
\end{equation}
for large $|x|$; and suppose that, for some $\zeta\in(0,\theta)$ and a.a. $(x_1,x_2)\in\R^2$,
\begin{equation}\label{eq:weibexmon2}
  \zeta\1_{\R_-^2}(x_1,x_2)\leq u_0(x_1,x_2)\leq \zeta \int_{x_1}^\infty\int_{x_2}^\infty  a (y_1,y_2)\,dy_1\,dy_2\leq \theta.
\end{equation}
Then, for each $\eps\in(0,1)$ and for all large enough $t$,
\begin{equation}\label{eq:weibestforeta}
  \frac{1}{2\sqrt{2}}\beta^2(1-\eps)^2t^2\leq X(t)\leq \frac{1}{\sqrt{2}}\beta^2t^2,
\end{equation}
see Lemma~\ref{le:2dim} and Remark~\ref{rem:forweibR2ex} in the Appendix for details.
\end{example}

\subsection{Overview of literature}
In the recent decades, there is a growing interest to the study of nonlocal monostable reaction-diffusion equations: for a pure local reaction \eqref{eq:partciularF} with $\alpha=1$, see e.g. \cite{BCV2016,BCGR2014,CDM2008,CD2005,Yag2009,Gar2011,AGT2012,SLW2011,SZ2010,XLR2018}; for a pure nonlocal reaction \eqref{eq:partciularF} with $\alpha=0$ and $k=1$, see \cite{Dur1988,PS2005,FM2004,FKK2011a,FKT100-1,FKT100-2,FKT100-3}; and for the origins of the topic, see also \cite{Sch1980,Die1978a,Aro1977,Wei1978,Mol1972a,Mol1972}. 

Two classical examples which satisfy \eqref{eq:coninlocalcase} are $f(s)=\nu s(\theta-s)$, cf. \cite{Fis1937}, and $f(s)=\nu s(\theta-s)^2$, cf. \cite{KPP1937}; for some $\nu>0$. Note that, if $f$ in \eqref{eq:coninlocalcase} is differentiable at $0$, then we require $f'(0)=\beta>0$ and $f(u)\leq f'(0)u$ for all $0\leq u\leq \theta$. 
The importance of the latter assumption 
for the front propagation was pointed out in e.g. \cite{CD2005,BCGR2014}, it leads to the possibility to describe the front using the linearized version of the corresponding equations \eqref{eq:basicequation} and \eqref{eq:RDequation} about $0$, that is just \eqref{eq:majorequation} below; note also that this assumption can be weaken, see \cite{Wei2012}. The degenerate case $f'(0)=0$ was considered in e.g. \cite{ZLW2012,AC2016}.

Now we discuss the existing results for both linear in time and accelerated propagations.

Mollison \cite{Mol1972,Mol1972a} studied, for the dimension $d=1$, a local reaction $F$ given by \eqref{eq:partciularF} with $\alpha=0$, $k=1$, $a^-=a$, and $\beta=\theta=\ka$ (that corresponds to $m=0$), for a monotone initial condition $u_0$, cf.~\eqref{eq:monu0}. He has shown that
 the~property of the corresponding propagation front to have an `averaged' constant speed is necessary and sufficient with the existence of a $\la>0$, such that
\[
  \int_\R a(s)e^{\la s}ds<\infty,  \qquad \sup_{s\in\R}u_0(s)e^{\la s}<\infty.
\]
Note that such $u_0$ gives an unbounded set $\La(t)=(-\infty,\ga t)$ 
 in \eqref{eq:whatweareproving1}--\eqref{eq:whatweareproving2}.

For $d\geq1$, we have shown in \cite[Proposition 3.1]{FKT100-3} that similar restrictions 
\begin{equation}\label{eq:lighttails}
\exists\,\mu>0:\ \int_\X a(x)e^{\mu |x|}dx<\infty, \qquad \forall\,\la>0:\ \esssup_{x\in\X}u_0(x)e^{\la |x|}<\infty
\end{equation}
yield that the solution $u$ to the equation \eqref{eq:basicequation} propagates at most linearly in any direction $\xi\in\X$. Note that \eqref{eq:lighttails} implies $u_0\in L^1(\X)$, cf.~\eqref{eq:intu0}. Moreover, for the reaction $F$ given by \eqref{eq:partciularF} with $k=1$ and under \eqref{eq:sepfromzerobeta},
we have proved in \cite[Propositions~4.7 and 4.2]{FKT100-3}  
that the assumption \eqref{eq:lighttails} implies that the convergences \eqref{eq:whatweareproving1}--\eqref{eq:whatweareproving2} hold with $\La(t)=t\La(1)$, where $\La(1)$ is a bounded convex subset of $\X$. 
For the particular case of $\alpha=0$, $k=1$, $a=a^-$ in \eqref{eq:partciularF}, a similar result was obtained in \cite{PS2005}.

The conditions \eqref{eq:lighttails} are closed to the necessary ones, cf.~\cite{Yag2009,Gar2011}.
We have proved in \cite[Proposition 1.4]{FKT100-3} (cf. also \cite{Gar2011}) that, if a bit weaker form of \eqref{eq:lighttails} fails for $a$ (roughly, if $a$ is `heavier' than any exponent at infinity), then the convergence \eqref{eq:whatweareproving1} holds with $\La(t)=tK$ for an {\em arbitrary} compact set $K\subset\X$.
Therefore, the propagation of the front is faster than linear.

For the dimension $d=1$ and for the local reaction $F$ given by \eqref{eq:partciularF} with $\alpha=1$, the acceleration was known in mathematical biology, see e.g. the results and references in \cite{MK2003,HF2014,LPP2016}.
The first rigorous result in this direction was done by Garnier~\cite{Gar2011}, who proved an analogue of \eqref{eq:whatweareproving1}--\eqref{eq:whatweareproving2} for $d=1$ and a compactly supported initial condition $u_0$.
However, in his approach, the set $\La\bigl(t+\eps t,c\bigr)$ in \eqref{eq:whatweareproving2} given by \eqref{eq:defLa} was replaced by $\La\bigl(\ga t,c\bigr)$ with some (unknown) $\ga>1$, i.e. the result was not sharp. Note that the technique in \cite{Gar2011} was inspired by \cite{HR2010}, where an acceleration was shown for the  classical KPP-equation with a slowly decreasing initial condition; see also a recent paper \cite{Hen2016}.

Further progress in the study of the acceleration in the dimension $d=1$, for a local reaction $F$ (with $\alpha=1$ in \eqref{eq:partciularF}), was done recently. In \cite{BGHP2017}, both $a$ and $u_0$ are supposed to be symmetric, with a heavy-tailed $a$; the technique used there goes back to \cite{ES1989}.  In \cite{AC2016}, the case of $f'(0)=0$ was considered (that does not covered by the present paper, because of \eqref{eq:nondegenerate}), then $a$ does not need to be symmetric, and $u_0$ is separated from $0$ at $-\infty$, however, $u_0(x)=0$ for large $x$. 

In \cite{FT2017c}, we considered, for the case $d=1$, a general reaction $F$ which satisfies \eqref{eq:propertiesofF}--\eqref{eq:nondegenerate} such that the corresponding $G$ fulfills the assumptions of Section~\ref{sec:assumandresults} below. In this case, the result of Theorem~\ref{thm:radsym} was extended to functions $a$ and $u_0$ which have different orders of decreasing at $\pm\infty$; for example, $a(x)=\exp(-x^\gamma)$, $x>r$, and $a(x)=(1-x)^{-1-\delta}$, $x<-r$, for some $\gamma\in(0,1)$, $\delta,r>0$.

Therefore, up to our knowledge, the present paper is the first one which contains results about the acceleration in \eqref{eq:whatweareproving1}--\eqref{eq:whatweareproving2} for the multidimensional case $d>1$.
 
Note also that analogous results were obtained in \cite{CR2013,CR2012} for the equation of the type \eqref{eq:RDequation} with a local reaction $F$, where $L$ in \eqref{eq:Markovgenerator}, was replaced by a~fractional Laplacian (in particular, the kernel $a$ was singular and non-integrable); cf.~also \cite{FY2013,MM2015,CCR2012}.

The present paper is organized as follows.
In~Subsection~\ref{subsec:assum}, we formulate further assumptions on $a$ and $G$ and known results about solutions to \eqref{eq:basicequation}. Note that these assumptions are fulfilled for the reaction \eqref{eq:partciularF} in conditions of Theorem~\ref{thm:radsym}, see Lemma~\ref{le:GisOK}. In Subsection~\ref{subsec:statements}, we describe the mentioned class $\Et$ of regular heavy-tailed functions and formulate the main result, Theorem~\ref{thm:combined}, which generalizes Theorem~\ref{thm:radsym} (cf.~Remark~\ref{rem:radsym}). In Section~\ref{sec:stratofproof}, we present a scheme of the proof for Theorem~\ref{thm:combined}. Section~\ref{sec:techtools} contains technical tools, mainly about the properties of sets $\La(t)$ in \eqref{eq:whatweareproving1}--\eqref{eq:defLa}. In~Section~\ref{sec:proof}, we present a detailed proof of Theorem~\ref{thm:combined}. Finally, the Appendix contains, in particular, the proof of the mentioned Lemma~\ref{le:GisOK}.

\section{Assumptions and general results}\label{sec:assumandresults}
\subsection{Further assumptions}\label{subsec:assum}
Consider further assumptions to \eqref{assum:kappa>m}--\eqref{assum:Gpositive}.
Set, for an $r>0$,
\[
	E^+:=\{v\in E\mid v\geq0\}, \qquad E_r^+:=\{v\in E\mid 0 \leq v \leq r\}.
\] 

We assume that $G$ is (locally) Lipschitz continuous in $E_\theta^+$, namely, 
\begin{assum}\label{assum:Glipschitz}
	\begin{gathered}
		\textsl{there exists $l_\theta>0$ such that}\\
		\|Gv-Gw\| \leq l_\theta\|v-w\|,\quad v,w \in E_\theta^+.
	\end{gathered}
\end{assum}

We restrict ourselves to the case when the comparison principle for \eqref{eq:basicequation} holds.
Namely, we assume that the right hand side (r.h.s. in the sequel) of \eqref{eq:basicequation} is a~(quasi-) monotone operator:
\begin{assum}\label{assum:sufficient_for_comparison}
	\begin{gathered}
		\textsl{for some $p\geq0$ and for any $v,w\in E_\theta^+$ with $v\leq w$},\\
  	\ka a*v -v\, Gv + pv \leq \ka a*w -w\, Gw + pw.
	\end{gathered}
\end{assum}
Here and below $*$ means the classical convolution over $\X$, i.e., for $u\in E$,
\begin{equation}\label{eq:defconv}
  (a*u)(x):=\int_\X a(x-y)u(y)\,dx, \quad x\in\X.
\end{equation}

\begin{theorem}[\!\!{\cite[Theorems 2.2, 2.3, Proposition 4.2]{FT2017a}}]\label{thm:existandcompared}
Let \eqref{assum:kappa>m}--\eqref{assum:sufficient_for_comparison} hold. Let $u_0\in E^+_\theta$. Then, for each $T>0$, there exists a unique solution $u=u(x,t)$ to \eqref{eq:basicequation} for $t\in[0,T]$; and \eqref{eq:intube} holds for all $t>0$. Moreover, if $v_0\in E^+_\theta$, $v=v(x,t)$ is the corresponding solution to $\eqref{eq:basicequation}$, and if $u_0\leq v_0$, then $0\leq u(\cdot,t)\leq v(\cdot,t)\leq \theta$ for all $t>0$.
\end{theorem}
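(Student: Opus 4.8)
The plan is to treat \eqref{eq:basicequation} as an abstract semilinear equation in $E=L^\infty(\X)$ and solve it by a fixed-point argument adapted to the order structure. First I would rewrite the right-hand side in the form $\dt u=-\la u+H(u)$, where $\la:=m+p$ with $p$ the constant from \eqref{assum:sufficient_for_comparison}, and $H(v):=\ka\,a*v-v\,Gv+pv$. Two facts are then immediate. First, $v\mapsto a*v$ is a bounded linear operator on $E$ with $\|a*v\|\le\|v\|$, so combined with \eqref{assum:Glipschitz} and the bound $\|Gw\|\le\beta$ on $E_\theta^+$, the map $H$ is Lipschitz on $E_\theta^+$ with a constant $L$ depending only on $\ka,m,p,\theta,l_\theta$ (and not on $u_0$). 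Second, by \eqref{assum:sufficient_for_comparison} the map $H$ is order-preserving on $E_\theta^+$, and since $H0=0$ while $H\theta=\ka\theta-\theta\beta+p\theta=\la\theta$ (here $\int_\X a\,dx=1$, $G\theta=\beta$ and $\ka-m=\beta$ are used), monotonicity forces $0\le Hv\le\la\theta$ for every $v\in E_\theta^+$.

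Next I would pass to the mild formulation $u(t)=e^{-\la t}u_0+\int_0^t e^{-\la(t-s)}H(u(s))\,ds$ and run Picard iteration from the constant path $u^{(0)}\equiv u_0$. The second fact above makes the tube invariant under the iteration: if $u^{(n)}(s)\in E_\theta^+$ for all $s$, then $0\le H(u^{(n)}(s))\le\la\theta$, hence $e^{-\la t}u_0\le u^{(n+1)}(t)\le e^{-\la t}\theta+\theta(1-e^{-\la t})=\theta$, so $u^{(n+1)}\in C([0,T],E_\theta^+)$ as well. On this complete invariant set, the Lipschitz bound gives a contraction whenever $LT<1$; since $L$ is independent of $u_0$, the unique local mild solution extends to any prescribed horizon $T>0$ by finitely many continuation steps, and it satisfies \eqref{eq:intube} by construction. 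A standard bootstrap upgrades it to a classical solution: $s\mapsto H(u(s))$ is continuous, so $t\mapsto\int_0^t e^{-\la(t-s)}H(u(s))\,ds$ is $C^1$ and $u$ solves \eqref{eq:basicequation} in the stated sense; uniqueness follows from a Gronwall estimate on the difference of two solutions.

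For the comparison statement I would rerun the same Picard scheme simultaneously from $u_0$ and from $v_0$. Because $H$ is order-preserving on $E_\theta^+$ and the kernels $e^{-\la(t-s)}$ are nonnegative, the inequality $u_0\le v_0$ propagates through the iteration, giving $u^{(n)}(t)\le v^{(n)}(t)$ for all $n$ and $t$; passing to the limit yields $u(\cdot,t)\le v(\cdot,t)$, and together with \eqref{eq:intube} this gives the two-sided bound $0\le u(\cdot,t)\le v(\cdot,t)\le\theta$.

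The main obstacle, I expect, is not any single estimate but the bookkeeping needed to make the three requirements — contractivity, invariance of $E_\theta^+$, and order preservation — compatible within one iteration scheme, in particular using the localized Lipschitz constant $l_\theta$ and the shift $p$ consistently and keeping the contraction time bounded below so that global existence follows by iteration. A secondary point is that $a*v$ need not be continuous in $x$, so the whole argument must genuinely be carried out in $L^\infty$ rather than in a space of continuous functions; this is harmless here, since convolution, $G$, pointwise multiplication, and the scalar semigroup $e^{-\la t}$ all act boundedly on $E$.
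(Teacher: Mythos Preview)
The paper does not contain a proof of this theorem: it is quoted from \cite{FT2017a} (Theorems~2.2, 2.3 and Proposition~4.2 there), so there is no argument in the present paper to compare your proposal against.

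Your approach is nonetheless sound and is the standard route to such results. The key device---shifting by the constant $p$ from \eqref{assum:sufficient_for_comparison} so that $H(v)=\ka\,a*v-v\,Gv+pv$ is simultaneously Lipschitz on $E_\theta^+$ (via \eqref{assum:Glipschitz} and the bound $Gw\le\beta$ from \eqref{assum:Gpositive}) and order-preserving (via \eqref{assum:sufficient_for_comparison}), with $H0=0$ and $H\theta=\la\theta$---is precisely what makes the Picard map preserve the order interval $[0,\theta]$ while contracting, and it yields existence, the tube bound \eqref{eq:intube}, and the comparison $u\le v$ in one stroke. Since the ``semigroup'' here is the scalar $e^{-\la t}$, mild solutions are automatically classical once $t\mapsto H(u(t))$ is continuous, as you note.

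One point to tighten: your uniqueness step invokes a Gronwall estimate that relies on the Lipschitz constant $l_\theta$, which is only assumed on $E_\theta^+$. To conclude uniqueness among \emph{all} classical solutions you should first argue that any classical solution starting in $E_\theta^+$ must remain there (e.g.\ by a continuity-in-time argument: on the maximal interval where it stays in the tube it coincides with your constructed solution, hence never reaches the boundary of the tube in finite time), or else state uniqueness within the class of solutions taking values in $E_\theta^+$. This is a routine refinement, not a gap in the strategy.
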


Note also that if $u_0$ is (uniformly) continuous function on $\X$, then $u(\cdot,t)$ will be also (uniformly) continuous function on $\X$ for all $t>0$. The comparison between solutions in Theorem~\ref{thm:existandcompared} was a part of a more general result, which we will also use.

\begin{theorem}[\!\!{\cite[Theorems 2.2]{FT2017a}}]\label{thm:comparison}
	Let \eqref{assum:kappa>m}--\eqref{assum:sufficient_for_comparison} hold.  Let $T>0$ be fixed. Suppose that  $u_1,u_2:[0,T]\to E$ are continuous mappings, continuously differentiable in $t\in(0,T]$, and such that, for $(x,t)\in\X \times(0,T]$,
    \begin{gather*}
		\frac{\partial u_1}{\partial t} - \ka a*u_1 +mu_1 +u_1Gu_1 \leq \frac{\partial u_2}{\partial t} - \ka a*u_2 +mu_2 +u_2Gu_2,\\
u_1(x,t)\geq0, \qquad u_2(x,t)\leq \theta,\qquad
      0\leq u_{1}(x,0)\leq u_{2}(x,0)\leq \theta.
    \end{gather*}
  	Then $u_{1}(x,t)\leq u_{2}(x,t)$ for $(x,t)\in\X \times[0,T]$. 
\end{theorem}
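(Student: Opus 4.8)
The plan is to reduce the comparison to a linear Gronwall estimate via the mild (integral) formulation of \eqref{eq:basicequation}, exploiting two structural facts: first, by subtracting a multiple of the identity the zeroth-order part of the right-hand side becomes dissipative, so the associated linear evolution is positivity-preserving; second, by \eqref{assum:sufficient_for_comparison} the operator
\[
\Psi v:=\ka\,a*v-v\,Gv+pv,\qquad v\in E_\theta^+,
\]
is order-preserving, while its non-monotone part is controlled by the Lipschitz bound \eqref{assum:Glipschitz}.

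First I would rewrite \eqref{eq:basicequation} as $\partial_t u+(m+p)u=\Psi u$ and use the variation-of-constants formula: any mapping $u$ as in the statement satisfies, with $g_u:=\partial_t u-\ka\,a*u+mu+uGu$,
\[
u(t)=e^{-(m+p)t}u(0)+\int_0^t e^{-(m+p)(t-s)}\bigl(\Psi u(s)+g_u(s)\bigr)\,ds .
\]
Applying this to $u_1$ and to $u_2$, subtracting, and discarding the nonpositive contributions (here $g_{u_1}\le g_{u_2}$, $u_1(0)\le u_2(0)$, and the kernel $e^{-(m+p)(t-s)}$ is nonnegative), I obtain for $v:=u_1-u_2$ and $v^+:=\max(v,0)$ the estimate
\[
v(t)\le\int_0^t e^{-(m+p)(t-s)}\bigl(\Psi u_1(s)-\Psi u_2(s)\bigr)\,ds .
\]

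The key step is then to split $\Psi u_1-\Psi u_2=\bigl(\Psi u_1-\Psi(u_1\wedge u_2)\bigr)+\bigl(\Psi(u_1\wedge u_2)-\Psi u_2\bigr)$. Since $u_1\wedge u_2\le u_2$, the second bracket is $\le0$ by \eqref{assum:sufficient_for_comparison}; since $u_1-u_1\wedge u_2=v^+$, the first bracket is bounded in $E$-norm by $L\|v^+(\cdot,s)\|$, where $L$ depends only on $\ka$, $p$, $\theta$ and $l_\theta$ (one uses $\|a*w\|\le\|w\|$, the bound $0\le Gw\le\beta$ on $E_\theta^+$ from \eqref{assum:Gpositive}, and \eqref{assum:Glipschitz}). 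Bounding $e^{-(m+p)(t-s)}\le1$, this gives $v(x,t)\le L\int_0^t\|v^+(\cdot,s)\|\,ds$ for a.a.\ $x$; taking the essential supremum in $x$ and then the positive part yields $\|v^+(\cdot,t)\|\le L\int_0^t\|v^+(\cdot,s)\|\,ds$ on $[0,T]$, and Gronwall's lemma forces $v^+\equiv0$, that is $u_1\le u_2$ on $\X\times[0,T]$.

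The hard part will be a bookkeeping issue: the split above invokes \eqref{assum:Gpositive}, \eqref{assum:Glipschitz} and \eqref{assum:sufficient_for_comparison}, which are available only on the order interval $E_\theta^+$, so one must know that $u_1$, $u_2$ and $u_1\wedge u_2$ remain in $E_\theta^+$ throughout $[0,T]$, whereas the hypotheses supply only $u_1\ge0$ and $u_2\le\theta$. I would deal with this by first extending $G$ off $E_\theta^+$ in a globally Lipschitz way that does not affect the argument on $E_\theta^+$, and/or by running the estimate first against the constant solutions $0$ and $\theta$ to secure the missing one-sided bounds before comparing $u_1$ and $u_2$ directly; this is the delicate point carried out in \cite{FT2017a}. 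A second point worth flagging is that $E=L^\infty(\X)$ carries no compactness, so one cannot realise the maximum principle by evaluating $v$ at an extremal point — passing through the mild formulation (equivalently, an approximate-maximum argument in $t$) is exactly what circumvents this.
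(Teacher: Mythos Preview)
The paper does not prove this statement: it is quoted from \cite{FT2017a} (Theorem~2.2 there) and used as a black box throughout, so there is no in-paper argument to compare your proposal against.

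As an outline, your approach is the standard and correct one for comparison under a quasi-monotonicity hypothesis such as \eqref{assum:sufficient_for_comparison}: pass to the mild formulation with the shift $p$, use that $\Psi v=\ka\,a*v-vGv+pv$ is order-preserving on $E_\theta^+$, split $\Psi u_1-\Psi u_2$ through $u_1\wedge u_2$ so that the monotone part has a sign and the remainder is controlled by the Lipschitz bound \eqref{assum:Glipschitz}, and finish with Gronwall on $t\mapsto\|v^+(\cdot,t)\|$. You are also right that in $E=L^\infty(\X)$ there is no extremal point to evaluate at, which is precisely why the integral formulation is the natural vehicle.

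The one place that is more than bookkeeping is exactly the one you flag. Your split needs $u_1$, $u_2$ and $u_1\wedge u_2$ to lie in $E_\theta^+$, whereas the hypotheses supply only $u_1\ge0$ and $u_2\le\theta$; in particular $u_1\wedge u_2$ need not be $\ge0$. Neither of your two proposed fixes closes this on its own: the truncation $\tilde\Psi v:=\Psi\bigl((v\vee0)\wedge\theta\bigr)$ is globally Lipschitz and order-preserving, but the assumed inequality $g_{u_1}\le g_{u_2}$ is written for the \emph{original} $G$ and does not automatically transfer to $\tilde G$ off $E_\theta^+$; and comparing first against the constants $0$, $\theta$ does not run directly either, since you have no sign information on $g_{u_1}$ or $g_{u_2}$ individually, only on their difference. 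The actual resolution combines both ideas with a continuity-in-$t$ argument, and that is indeed where the substance of the proof in \cite{FT2017a} lies; it should be written out rather than deferred.
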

Note that here and in the sequel, with an abuse of notations, the symbol $*$ stands for the convolution in $x$ variable only, when $u=u(x,t)$ in \eqref{eq:defconv}.

We assume also that the kernel $a$ is not degenerate at the origin, namely,
\begin{assum}\label{assum:a_nodeg}
  \textsl{there exists $\rho>0$ such that} \ a(x)\geq\rho \text{ for a.a. } x\in B_\rho(0).
\end{assum}

Consider on $E$ the topology of the locally uniform convergence: a sequence
 $(v_n)_{n\in\N}\subset E$ is said to be convergent to $v \in E$ locally uniformly, which we denote $v_n \locun v$, $n\to \infty$, if, for each compact set $\La\subset\X$, the sequence $(\1_\La v_n)_{n\in\N}$ converges to $\1_\La v$ in $E$ as $n\to \infty$; here and below $\1_B$ denotes the indicator-function of a $B\subset\X$. 

Stability of the solution to \eqref{eq:basicequation} with respect to the initial condition in the topology of the locally uniform convergence requires continuity of $G$ in this topology. Namely, we assume that
\begin{assum}\label{assum:G_locally_continuous}
	\begin{gathered}	
		\textsl{for any $v_n, v \in E_\theta^+$ such that $v_n \locun v$, $n \to \infty$, one has}\\
		Gv_n \locun Gv, \ n \to \infty.
	\end{gathered}
\end{assum}

We will consider the translation invariant case only. Namely, for each $y\in\X$, we denote by $T_y:E\to E$ the translation operator, that is $(T_y v)(x):=v(x-y)$, $x\in\X$. Then we assume that
\begin{assum}\label{assum:G_commute_T}
		(T_y G v)(x) = (G T_y v)(x),\quad v \in E_\theta^+,\  x\in\X.
\end{assum}

Under \eqref{assum:G_commute_T}, for any $r\equiv const\in (0,\theta)$, $Gr\equiv const$. In this case, we assume also that
\begin{assum}\label{assum:G_increas_on_const}
  Gr < \beta, \qquad r\in(0,\theta).	
\end{assum}

Finally, we will distinguish two cases. If the condition
\begin{assum}\label{assum:first_moment_finite}
	\int_\X \lvert y\rvert a(y)dy<\infty
\end{assum}
holds, then we assume, additionally to \eqref{assum:sufficient_for_comparison}, that
\begin{assum}\label{assum:improved_sufficient_for_comparison}
	\begin{gathered}
		\textsl{there exist $p\geq 0$, $b\in C^\infty(\X)\cap L^\infty(\X)$, $\delta>0$ such that}\\ 
		a(x)-b(x)\geq \delta\1_{B_\delta(0)}(x), \quad x\in\X,\\
  	w\, Gw\leq \ka b*w + pw, \quad w\in E_\theta^+,
	\end{gathered}
\end{assum}
and also
\begin{assum}\label{firstfullmoment}
  \int_\X x a(x)\,dx=0\in\X.
\end{assum}
Otherwise, if \eqref{assum:first_moment_finite} does not hold, then we assume that,
\begin{assum}\label{assum:approx_of_basic}
 \begin{gathered}
    \textsl{let \eqref{assum:kappa>m}--\eqref{assum:sufficient_for_comparison} hold, and for each $n\in\N$, there exist}\\
    0\leq a_n\in L^1(\X), \quad \ka_n>0, \quad G_n:E\to E, \quad \theta_n\in(0,\theta]\\ 
    \textsl{which satisfy \eqref{assum:kappa>m}--\eqref{firstfullmoment} instead of $a$, $\ka$, $G$, $\theta$, respectively,}\\
    \textsl{such that } \theta_n\to\theta, \ n\to\infty, \ \text{and}\\ 
    \ka_n a_n*w -wG_nw \leq \ka a*w - wGw,\quad w\in E_{\theta_n}^+, \ n\in\N.
  \end{gathered} 
\end{assum}

The following statement describes the so-called \emph{hair-trigger} effect. 
\begin{theorem}[cf. {\cite[Theorems 2.5, 2.7]{FT2017a}}]\label{thm:hairtrigger}
	Let either \eqref{assum:kappa>m}--\eqref{firstfullmoment} hold or \eqref{assum:approx_of_basic} hold.
Let $u_0\in E_\theta^+$, $u_0\not\equiv0$, and let $u=u(x,t)$ be the corresponding solution to~\eqref{eq:basicequation}.
Then, for each compact set $K\subset \X$,
  \begin{equation*}
    \lim_{t\to\infty} \essinf_{x\in K} u(x,t)=\theta.
  \end{equation*}
\end{theorem}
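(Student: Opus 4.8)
The plan is to reduce everything to a local-in-space sub-solution argument combined with the comparison principle of Theorem~\ref{thm:comparison} and the asymptotic stability of $u\equiv\theta$. First I would observe that it suffices to prove the statement for the unit ball $K=B_R(0)$ with $R>0$ arbitrary, and — using translation invariance \eqref{assum:G_commute_T} together with $u_0\not\equiv0$ — that we may assume $u_0\geq\varepsilon_0\1_{B_{r_0}(0)}$ for some small $\varepsilon_0,r_0>0$; indeed, since $u_0\not\equiv0$ there is a ball on which $u_0$ is bounded below by a positive constant on a set of positive measure, and a short time step using the nonlocal term $\ka a*u_0$ together with \eqref{assum:a_nodeg} (the kernel is bounded below by $\rho$ on $B_\rho(0)$) spreads this positivity to an honest ball; by the comparison principle we may thus restrict to such $u_0$.

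The core of the argument is to build, for each target level $\theta'<\theta$, a stationary or time-increasing sub-solution. Consider the ``linearized at zero'' equation obtained by replacing $Gu$ with $0$, i.e. $\partial_t v=\ka a*v-mv=Lv+\beta v$; by \eqref{eq:propertiesofF} and \eqref{assum:Gpositive} the reaction term $Fu=u(\beta-Gu)$ dominates a genuinely monostable nonlinearity on $E_\theta^+$, so one expects a family of compactly generated sub-solutions. Concretely, I would look for a principal-eigenfunction-type object: for a large ball $B_\rho$, solve (or estimate) the nonlocal eigenvalue problem for $L$ restricted to functions supported near $B_\rho$, obtaining $\psi_\rho\geq0$, $\psi_\rho\not\equiv0$, with $\ka a*\psi_\rho-m\psi_\rho\geq \lambda_\rho\psi_\rho$ on $B_\rho$, where $\lambda_\rho\uparrow\beta$ as $\rho\to\infty$ (this monotone convergence of the principal eigenvalue to $\beta=\ka-m$ is exactly where \eqref{assum:a_nodeg} — nondegeneracy of $a$ at the origin — is used, to guarantee $\lambda_\rho>0$ for $\rho$ large). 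Then for $\rho$ large enough that $\lambda_\rho>Gr'$ for the relevant small constant $r'$ (possible by \eqref{assum:G_increas_on_const}, which says $Gr<\beta$ for every $r\in(0,\theta)$), the function $w(x)=\delta\psi_\rho(x)$ with $\delta$ small is a stationary sub-solution of \eqref{eq:basicequation}: on $B_\rho$ one has $\ka a*w-mw-wGw\geq\lambda_\rho w-wG(\|w\|)\geq 0$ using Lipschitz/monotonicity of $G$ from \eqref{assum:Glipschitz}--\eqref{assum:Gpositive} and the smallness of $\|w\|$, while outside $B_\rho$ we take $w=0$ so the inequality is trivial because $a*w\geq0$. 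Starting the evolution from $u_0\geq w$ (arrange $\delta$ small), Theorem~\ref{thm:comparison} gives $u(x,t)\geq w(x)$, hence $\liminf_t u(x,t)\geq w(x)>0$ on $B_\rho$; iterating, the solution with initial datum bounded below by a positive constant on a ball produces, after finite time, a datum bounded below by a positive constant on a strictly larger ball, and one bootstraps positivity on all of $K$. In the case where \eqref{assum:first_moment_finite} fails, the same scheme is run for the approximating data $(a_n,\ka_n,G_n,\theta_n)$ of \eqref{assum:approx_of_basic} — each approximate problem has the hair-trigger property with limit $\theta_n$ — and then one passes to the limit $n\to\infty$ using $\theta_n\to\theta$ and the domination inequality in \eqref{assum:approx_of_basic}.

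Having secured a uniform-in-$x$-on-$K$ lower bound $\liminf_t u(x,t)\geq\sigma>0$, the final step upgrades $\sigma$ to $\theta$. Here I would use that $u\equiv\theta$ is asymptotically stable together with a second comparison: let $\bar t$ be large enough that $u(\cdot,\bar t)\geq\sigma\1_{B_{R'}(0)}$ with $R'$ as large as we like (by the spreading already proved), and compare $u(\cdot,\bar t+\cdot)$ from below with the solution $\underline u$ of \eqref{eq:basicequation} started from $\sigma\1_{B_{R'}(0)}$. On the one hand $\underline u\leq u(\cdot,\bar t+\cdot)$ by Theorem~\ref{thm:existandcompared}; on the other hand, by the monotone-in-$R'$ sub-solution machinery above, as $R'\to\infty$ the corresponding sub-solutions $w=\delta\psi_{\rho(R')}$ can be taken with $\delta$ as close to the value making $\|w\|$ near $\theta'$ as desired (since $\lambda_\rho\to\beta>G r$ for all $r<\theta$), forcing $\liminf_t\underline u(x,t)\geq\theta'$ for every $\theta'<\theta$ on every fixed $K$. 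Combined with the a priori upper bound $u\leq\theta$ from \eqref{eq:intube}, this yields $\lim_t\essinf_{x\in K}u(x,t)=\theta$.

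I expect the main obstacle to be the construction and the limiting behavior of the principal eigenvalue $\lambda_\rho$ for the nonlocal operator $L$ on bounded domains: unlike the Laplacian, $L$ has no compact resolvent and the ``Dirichlet restriction'' must be interpreted carefully (e.g. as $\ka a*\cdot-m$ acting on $L^\infty(B_\rho)$ extended by zero, whose spectral bound must be shown to increase to $\beta$). Establishing $\lambda_\rho\uparrow\beta$ — equivalently, that truncating the kernel to a large ball recovers almost all of its mass in the relevant quadratic-form sense — is the crux; once that monotone convergence is in hand, assumptions \eqref{assum:a_nodeg}, \eqref{assum:G_increas_on_const}, \eqref{assum:Glipschitz} and the comparison principle assemble the conclusion mechanically, and \eqref{assum:approx_of_basic} handles the heavy-tailed case by approximation. (In fact this is precisely the content of the cited \cite[Theorems 2.5, 2.7]{FT2017a}, so in the paper proper one may simply invoke it; the above is how one would prove it from scratch.)
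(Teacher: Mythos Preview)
The paper does not prove Theorem~\ref{thm:hairtrigger} at all: it is stated with the attribution ``cf.~\cite[Theorems 2.5, 2.7]{FT2017a}'' and then used as a black box. You recognise this yourself in your last paragraph, so in that sense your proposal is already aligned with what the paper does --- namely, cite and move on.

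As for the sketch you give of the underlying argument, the broad architecture (localised principal-eigenvalue sub-solutions, $\lambda_\rho\uparrow\beta$, comparison, bootstrap) is indeed the standard route and is close in spirit to how \cite{FT2017a} proceeds. Two points where your outline is loose compared to the actual proof: first, you do not use assumptions \eqref{assum:improved_sufficient_for_comparison} and \eqref{firstfullmoment}, yet they appear explicitly in the hypotheses --- in \cite{FT2017a} the structural inequality $w\,Gw\leq \ka b*w+pw$ from \eqref{assum:improved_sufficient_for_comparison} is what makes the sub-solution construction go through for a \emph{nonlocal} $G$, where your estimate $Gw\leq G(\|w\|)$ is not available (there is no pointwise monotonicity of $G$, only \eqref{assum:Gpositive} and \eqref{assum:Glipschitz}). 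Second, the upgrade step from a positive floor $\sigma$ to the full limit $\theta$ is more delicate than ``take $\delta$ close to $\theta'$'': since $G$ acts on functions rather than constants, one cannot simply invoke $G(\theta')<\beta$ from \eqref{assum:G_increas_on_const} on a non-constant sub-solution; the actual argument builds a sequence of nearly-flat sub-solutions on expanding balls and uses \eqref{assum:G_locally_continuous} to control $G$ on them. These are refinements rather than fatal gaps, and your identification of the eigenvalue limit $\lambda_\rho\to\beta$ as the crux is accurate.
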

\begin{remark}\label{rem:agreementnonzero}
For a brevity of notations, we will treat $u_0\not\equiv0$ as follows: 
there exist $\rho>0$ and $x_0\in\X$ such that $u_0(x)\geq \rho$ for a.a.~$x\in B_\rho(x_0)$. 
\end{remark}

The following lemma shows that the mapping $G$ corresponding to the reaction \eqref{eq:partciularF} satisfies the assumptions above. Its proof is provided in the Appendix.
\begin{lemma}\label{le:GisOK}
Let \eqref{assum:kappa>m}, \eqref{assum:first_moment_finite} and \eqref{firstfullmoment} hold. Let $F$ be given by \eqref{eq:partciularF}.
Suppose that, for some $\varrho>0$, \eqref{eq:sepfromzerobeta} holds, and, if $\alpha\neq 0$, suppose also that 
\eqref{eq:coninlocalcase} holds. Set, for a.a. $x\in\X$,
\begin{equation}\label{eq:defGbyF}
(Gu)(x):=\beta-\frac{(Fu)(x)}{u(x)}, \ \  u(x)\neq0, \qquad (Gu)(x):=0, \ \ u(x)=0.
\end{equation}
Then \eqref{assum:Gpositive}--\eqref{assum:G_increas_on_const} and \eqref{assum:improved_sufficient_for_comparison} hold.
\end{lemma}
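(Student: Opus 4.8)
\textbf{Proof plan for Lemma~\ref{le:GisOK}.}
The plan is to verify each assumption \eqref{assum:Gpositive}--\eqref{assum:G_increas_on_const} and \eqref{assum:improved_sufficient_for_comparison} separately for the $G$ defined by \eqref{eq:defGbyF} out of the reaction \eqref{eq:partciularF}. First I would record the explicit form of $Gu$: for $u(x)\neq0$ one has
\[
(Gu)(x)=\alpha\Bigl(\beta-\frac{f(u(x))}{u(x)}\Bigr)+(1-\alpha)\frac{\beta}{\theta^k}\Bigl(\theta^k-\bigl(\theta-(a^-*u)(x)\bigr)^k\Bigr),
\]
which already exhibits $G$ as a sum of a purely local Nemytskii-type term and a nonlocal term. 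The nonnegativity $0=G0\leq Gu\leq G\theta=\beta$ in \eqref{assum:Gpositive} then follows from \eqref{eq:coninlocalcase} for the first summand (since $0<f(r)\leq\beta r$ forces $0\leq\beta-f(r)/r<\beta$, with the endpoint values giving $0$ and $\beta$) and, for the second summand, from $0\leq(a^-*u)(x)\leq\theta$ on $E_\theta^+$, which makes $\theta-(a^-*u)(x)\in[0,\theta]$ so that $\theta^k-(\theta-(a^-*u)(x))^k\in[0,\theta^k]$; the two extreme values $u\equiv0$ and $u\equiv\theta$ are checked directly (for $u\equiv\theta$, $a^-*u\equiv\theta$ by $\int a^-=1$, giving $G\theta=\beta$).

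Next I would treat the Lipschitz and continuity properties. For \eqref{assum:Glipschitz}: the map $v\mapsto f(v(\cdot))/v(\cdot)$ is Lipschitz on $E_\theta^+$ because $r\mapsto f(r)/r$ is Lipschitz on $[0,\theta]$ by \eqref{eq:coninlocalcase} (the singularity at $0$ being removable with limit $\beta$); the nonlocal term is Lipschitz because $v\mapsto a^-*v$ is a bounded operator on $E$ with norm $\|a^-\|_{L^1}=1$, and $s\mapsto(\theta-s)^k$ is Lipschitz on the bounded set $s\in[0,\theta]$. For \eqref{assum:G_locally_continuous}: the local term obviously respects $\locun$ convergence (it acts pointwise through a continuous function), and for the nonlocal term one uses that $a^-\in L^1$, so that if $v_n\locun v$ in $E_\theta^+$ then $a^-*v_n\to a^-*v$ locally uniformly --- split $a^-$ into a compactly supported part (handled by the local uniform convergence of $v_n$) and a small $L^1$-tail (handled by the uniform bound $\|v_n\|\leq\theta$); then compose with the continuous $s\mapsto(\theta-s)^k$. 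Translation invariance \eqref{assum:G_commute_T} is immediate since both $f(\cdot)$ acting pointwise and convolution with $a^-$ commute with $T_y$. For \eqref{assum:G_increas_on_const}: if $r\equiv\mathrm{const}\in(0,\theta)$ then $a^-*r\equiv r$, so $Gr=\alpha(\beta-f(r)/r)+(1-\alpha)\frac{\beta}{\theta^k}(\theta^k-(\theta-r)^k)$; since $f(r)<\beta r$ strictly on $(0,\theta)$ the first term is $<\alpha\beta$, and since $(\theta-r)^k>0$ strictly the second is $<(1-\alpha)\beta$, giving $Gr<\beta$ (when $\alpha\in\{0,1\}$ only one term is present, but the strict inequality still holds).

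The step I expect to be the main obstacle is verifying \eqref{assum:improved_sufficient_for_comparison}, i.e. producing the smooth kernel $b$ and the constant $p$ with $a(x)-b(x)\geq\delta\1_{B_\delta(0)}(x)$ and $w\,Gw\leq\ka b*w+pw$ for all $w\in E_\theta^+$. Here one uses the separation hypothesis \eqref{eq:sepfromzerobeta}: $\ka a(x)\geq(1-\alpha)k\beta a^-(x)+\varrho\1_{B_\varrho(0)}(x)$. The idea is to bound $w\,Gw$ from above: $w\,Gw=\alpha(\beta w-f(w))+(1-\alpha)\frac{\beta}{\theta^k}w(\theta^k-(\theta-a^-*w)^k)$; the local part $\alpha(\beta w-f(w))\geq0$ is bounded by $\alpha\beta w$ (so it contributes to $pw$), while for the nonlocal part one estimates $\theta^k-(\theta-s)^k\leq k\theta^{k-1}s$ for $s\in[0,\theta]$, giving $(1-\alpha)\frac{\beta}{\theta^k}w\cdot k\theta^{k-1}(a^-*w)\leq(1-\alpha)k\beta\frac{\theta}{\theta}(a^-*w)=(1-\alpha)k\beta(a^-*w)$ after using $w\leq\theta$; so $w\,Gw\leq(1-\alpha)k\beta(a^-*w)+\alpha\beta w$. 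Now choose $b$ to be a smooth, bounded, nonnegative function with $\ka b\leq\ka a-\frac{\varrho}{2}\1_{B_\varrho(0)}$ pointwise and $\ka b*w\geq(1-\alpha)k\beta(a^-*w)$ for all $w\in E_\theta^+$; such a $b$ exists by mollifying $a-\frac{\varrho}{2\ka}\1_{B_\varrho}$ slightly (staying below $a$ while still dominating $\frac{(1-\alpha)k\beta}{\ka}a^-$, which is possible precisely by the strict surplus $\varrho\1_{B_\varrho}$ in \eqref{eq:sepfromzerobeta}, enough of which is retained after mollification to yield $a-b\geq\delta\1_{B_\delta(0)}$ for small $\delta>0$). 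Then $w\,Gw\leq\ka b*w+\alpha\beta w\leq\ka b*w+\beta w$, so \eqref{assum:improved_sufficient_for_comparison} holds with $p=\beta$. The delicate point requiring care is choosing the mollification scale small enough that both inequalities (the pointwise domination $b\leq a$ with a residual bump, and the convolution domination $\ka b*w\geq(1-\alpha)k\beta a^-*w$) survive simultaneously; the slack $\varrho\1_{B_\varrho(0)}$ is exactly what makes this possible. Assumption~\eqref{assum:sufficient_for_comparison} then follows from \eqref{assum:improved_sufficient_for_comparison} by a standard argument (absorbing $-w\,Gw$ via $w\,Gw\leq\ka b*w+pw\leq\ka a*w+pw$ and monotonicity of $w\mapsto\ka a*w+pw$), or may be quoted as already known for this reaction.
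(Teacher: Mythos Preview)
Your verification of \eqref{assum:Gpositive}, \eqref{assum:Glipschitz}, \eqref{assum:a_nodeg}--\eqref{assum:G_increas_on_const}, and of \eqref{assum:improved_sufficient_for_comparison} itself, is correct and essentially the same as the paper's (which declares the first group ``straightforward'' and obtains \eqref{assum:improved_sufficient_for_comparison} from the bound $w\,Gw\le(1-\alpha)k\beta\,a^-\!*w+\mathrm{const}\cdot w$ combined with \eqref{eq:sepfromzerobeta}, exactly as you do).

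The gap is your treatment of \eqref{assum:sufficient_for_comparison}. The parenthetical argument --- ``absorbing $-wGw$ via $wGw\le\ka b*w+pw\le\ka a*w+pw$ and monotonicity of $w\mapsto\ka a*w+pw$'' --- does not give \eqref{assum:sufficient_for_comparison}: it shows $\ka a*w-wGw+pw\ge0$ for each $w$, but \eqref{assum:sufficient_for_comparison} asks that $w\mapsto\ka a*w-wGw+pw$ be \emph{monotone} on $E_\theta^+$, which is a different statement. An upper bound on $wGw$ alone cannot deliver this; one also needs to compare $wGw$ with $vGv$ for $v\le w$. The paper does this by writing $wGw-vGv=(w-v)Gw+v(Gw-Gv)$ and estimating $Gw-Gv$ \emph{pointwise}: the local part contributes $\alpha K(w-v)$ (Lipschitz constant of $r\mapsto f(r)/r$), and the nonlocal part, via $q^k-r^k\le k(q-r)$ for $0\le r\le q\le1$, contributes $(1-\alpha)k\beta\theta^{-1}\,a^-\!*(w-v)$. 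Multiplying by $v\le\theta$ and invoking \eqref{eq:sepfromzerobeta} then absorbs the convolution term into $\ka a*(w-v)$, with a residual $\varrho\1_{B_\varrho(0)}*(w-v)$ to spare; this yields \eqref{assum:sufficient_for_comparison} with $p=\beta+\alpha\theta K$, and the leftover residual is precisely what lets one squeeze a smooth $b$ between $a-\tfrac{\varrho}{\ka}\1_{B_\varrho(0)}$ and $a-\delta\1_{B_\delta(0)}$ for \eqref{assum:improved_sufficient_for_comparison}. So the paper derives both \eqref{assum:sufficient_for_comparison} and \eqref{assum:improved_sufficient_for_comparison} from a single reinforced inequality, whereas your route reaches only the latter; to close the gap you need the pointwise estimate on $Gw-Gv$, which is the one computation your sketch omits.
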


\subsection{General results}\label{subsec:statements}
In this Subsection, we present a general result, Theorem~\ref{thm:combined}, which generalizes Theorem~\ref{thm:radsym} to a non radially symmetric case and to the case of a reaction $F$ in \eqref{eq:RDequation} which corresponds to a mapping $G:E\to E$ in \eqref{eq:basicequation} which fulfills the assumptions above. We start with a description of the class $\Et $ of functions on $\R_+$ which is used in the both Theorems.

\begin{definition}\label{def:right-sideclasses}
 Let a measurable function $b:\R_+\to\R_+$ be such that $b(s)>0$ for all $s\geq\rho$ with some $\rho=\rho_b\geq0$. Then the function $b$ is said to be (right-side)
 \begin{description}
 	\item[--] {\em long-tailed}, if, for any $\tau\geq 0$,
                \begin{equation}\label{eq:longtaileddef}
                        \lim_{s\to\infty}\frac{b(s+\tau)}{b(s)}=1;
                \end{equation}
   \item[--] {\em tail-decreasing}, if $b=b(s)$ is strictly decreasing on $[\rho,\infty)$ to $0$;
   \item[--] {\em tail-log-convex}, if the function $\log b$ is convex (and hence continuous) on $(\rho,\infty)$.
 \end{description}
\end{definition}

\begin{definition}\label{def:super-subexp}
Let $d\in\N$.
  \begin{enumerate}[label={\arabic*})]
    \item Let $\Dt$ be the set of all bounded tail-decreasing functions $b:\R_+\to(0,\infty)$ such that $\int_0^\infty b(s) s^{d-1}\,ds<\infty$ and $\inf\limits_{|s|<r}b(s)>0$ for each $r>0$;
    \item Let $\Et \subset\Dt$ be the set of all functions $b\in\Dt$ such that  
   \begin{description}
    \item[--] $b$ is long-tailed and tail-log-convex, and $b(\rho)\leq 1$  (without loss of generality);
  \item[--]  there exist $\delta=\delta_b\in(0,1)$ and an increasing function $h=h_b:(0,\infty)\to(0,\infty)$, with
$h(s)<\dfrac{s}{2}$ and $\lim\limits_{s\to\infty}h(s)=\infty$ such that
\[
\lim_{s\to\infty}\frac{b(s\pm h(s))}{b(s)}=1, \qquad
	\lim_{s\to\infty} b\bigl(h(s)\bigr)s^{1+\delta}=0;
\]
\item[--] and, for the case $d>1$, we assume, additionally, that \emph{either}, for some $\mu,M>0$,
\begin{equation}\label{eq:polynomialb}
b(s)=\frac{M}{(1+s)^{d+\mu }}, \quad s\in\R_+,
\end{equation}
\emph{or}, for all $\nu\geq1$,
\begin{equation}\label{eq:quicklydecreasing}
\lim_{s\to\infty} b(s)s^\nu =0.
\end{equation}
\end{description}
\end{enumerate}
\end{definition}

The peculiarities of functions from $\Et $ can be found in \cite{FT2017b}.

As it was mentioned before, we are going to weaken the radially symmetric conditions of Theorem~\ref{thm:radsym}. Namely, we assume that there exist bounded measurable functions $b^+,b_+:\R_+\to\R_+$ and $v^\circ, v_\circ:\X\to[0,\theta]$ such that
\begin{alignat}{2}
b_+(|x|)&\leq  a (x)\leq b^+(|x|), &&\quad\qquad x\in \X;\tag{B1}\label{condB1}\\
v_\circ(x)&\leq u_0(x)\leq v^\circ(x), &&\quad\qquad  x\in \X.\tag{B2}\label{condB4}
\end{alignat}
In order to control the allowed range of values for $a$ or $u_0$ in \eqref{condB1}--\eqref{condB4}, we consider also the following definition.

\begin{definition}
Let $b_1,b_2:\R\to\R_+$ and, for some $\rho\geq0$, $b_i(s)>0$ for all $s\in[\rho,\infty)$, $i=1,2$. The functions $b_1$ and $b_2$ are said to be  
 {\em (asymptotically) log-equivalent}, if
 \begin{equation}\label{eq:log-equiv}
   \log b_1(s) \sim \log b_2(s), \quad s\to\infty.
 \end{equation}
\end{definition}

We will assume also that there exist constants $M,\delta,r>0$ and a point $x_0\in\X$ such that
\begin{alignat}{2}
b^+(s)&\leq \frac{M}{(1+s)^{d+\delta}}, \quad\qquad && s \geq r;\tag{B2}\label{condB2}\\
b_+(s)&\geq \delta, \quad\qquad &&s\in[0,\delta];\tag{B3}\label{condB3}\\
v_\circ(x)&\geq \delta, \quad\qquad && x\in B_{\delta}(x_0).\tag{B5}\label{condB5}
\end{alignat}

Recall that our objective is to show \eqref{eq:whatweareproving1}--\eqref{eq:whatweareproving2} with $\La(t)=\La(t,c)$ given by \eqref{eq:defLa}. 
The choice of the `shape' for the function $c=c(x)$ will be determined by the initial condition $u_0:\X\to[0,\theta]$. We will distinguish two cases. Namely, if the initial condition is integrable,
\begin{equation}
u_0\in L^1(\X),\label{eq:intcase}
\end{equation}
then we will choose, for a suitable $b\in\Et$, 
\begin{equation}
  c(x)=b(|x|), \quad x\in\X. \tag{C1} \label{eq:radialc}
\end{equation}
On contrary, if the initial condition is separated from $0$ at $\bigtimes\limits_{j=1}^d\{-\infty\}$, i.e. if
\begin{equation}
\begin{gathered}
\textsl{there exist $\zeta\in(0,\theta]$ and $z\in\X$ such that}\\
u_0(x)\geq \zeta \1_{\nabla(z)}(x), \quad x\in\X,
\end{gathered}\label{eq:moncase}
\end{equation}
where
\begin{equation}
    \nabla(z):=\{y\in\X: y_j\leq z_j, \ 1\leq j\leq d\}, \quad z\in\X, \label{eq:nabla}
\end{equation}
then we will choose (again, for some $b\in\Et$)
\begin{equation}
c(x)=\int_{\Delta(x)}b(|y|)\,dy, \quad x\in\X. \tag{C2} \label{eq:integralc}
\end{equation}

We~will refer to the examples of $c(x)$ in \eqref{eq:radialc} and \eqref{eq:integralc} as to integrable and `monotone' case, respectively. Note that, in the `monotone' case, $c$ decays to $0$ along all coordinate axes in~$\X$. It is worth noting also that we have shown in \cite[Proposition~5.7]{FT2017a}, that if $u_0$ decays along a direction in $\X$, then the corresponding solution decays along this direction as well.

We are ready to formulate now the general result.
\begin{theorem}\label{thm:combined}
Let either \eqref{assum:kappa>m}--\eqref{firstfullmoment} hold or \eqref{assum:approx_of_basic} hold. Let \eqref{condB1}--\eqref{condB5} hold. Suppose that $u_0\in E_\theta^+$, $u_0\not\equiv0$, $\theta-u_0\not\equiv0$ (cf.~Remark~\ref{rem:agreementnonzero}); and let $u$ be the corresponding solution to \eqref{eq:basicequation}. 
\begin{enumerate}
  \item Suppose that both functions $b_+$ and $b^+$ in \eqref{condB1} belong to $\Dt$ and are log-equivalent to a function $b\in\Et $. Let, additionally, $b_+$ be long-tailed and tail-log-convex.
  \begin{enumerate}
    \item Suppose that  
    \begin{equation}\label{eq:asdsdps}
 u_0(x)\leq b^+(|x|),  \quad x\in\X.
 \end{equation}
Then \eqref{eq:whatweareproving1}--\eqref{eq:whatweareproving2} hold with $\La(t)=\La(t,c)$ given by \eqref{eq:defLa} where $c(x)$ is given by \eqref{eq:radialc}.
 \item Suppose that 
 \eqref{eq:moncase} holds and
  \begin{equation}\label{eq:hevisidelb}
  u_0(x)\leq \int_{\Delta(x)}b^+(|y|)\,dy, \quad x\in\X,
  \end{equation}  
where $\Delta(x)$ is given by \eqref{eq:bigDelta}. 
  Then \eqref{eq:whatweareproving1}--\eqref{eq:whatweareproving2} hold with $\La(t)=\La(t,c)$ given by \eqref{eq:defLa} where $c(x)$ is given by \eqref{eq:integralc}. 
  \end{enumerate}
  \item Consider functions $b_\circ,b^\circ\in\Dt$ which are both log-equivalent to a function $b\in\Et $. Let,~additionally, $b_\circ$ be long-tailed and tail-log-convex, and assume that, cf.~\eqref{condB1},
\begin{equation}
  a(x)\leq b^\circ(|x|), \quad x\in\X.
\label{eq:asdsdps2-mod}
\end{equation}
\begin{enumerate}
  \item Suppose that 
    \[
      v^\circ(x)=b^\circ(|x|), \quad v_\circ(x)=b_\circ(|x|), \quad x\in\X. 
    \]
    Then \eqref{eq:whatweareproving1}--\eqref{eq:whatweareproving2} hold with $\La(t)=\La(t,c)$ given by \eqref{eq:defLa} where $c(x)$ is given by \eqref{eq:radialc}.
   \item Suppose that 
    \[
      v^\circ(x)=\int_{\Delta(x)}b^\circ(|y|)\,dy, \quad v_\circ(x)=\int_{\Delta(x)}b_\circ(|y|)\,dy, \quad x\in\X.
    \]
      Then \eqref{eq:whatweareproving1}--\eqref{eq:whatweareproving2} hold with $\La(t)=\La(t,c)$ given by \eqref{eq:defLa} where $c(x)$ is given by \eqref{eq:integralc}. 
\end{enumerate}
\end{enumerate}
\end{theorem}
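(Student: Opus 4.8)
The plan is to reduce Theorem~\ref{thm:combined} to a pair of one-sided statements about the nonlocal \emph{linear} equation $\partial_t w = \ka a*w - mw$ (i.e.\ \eqref{eq:majorequation} as referred to in the text), and then to transfer these back to the nonlinear problem via the comparison principle of Theorem~\ref{thm:comparison} and the hair-trigger effect of Theorem~\ref{thm:hairtrigger}. The skeleton is: (i) the \emph{upper} bound \eqref{eq:whatweareproving2} follows from $Fv\le\beta v$ (equivalently $Gv\ge 0$ from \eqref{assum:Gpositive}), so $u$ is dominated by the solution $\bar u$ of the linear equation with the same initial datum; one then shows $\esssup_{x\notin\La(t+\eps t)}\bar u(x,t)\to 0$. (ii) the \emph{lower} bound \eqref{eq:whatweareproving1}: here one cannot linearize globally because $F$ is only monostable, but one can linearize near $0$ — for $v$ small, $Fv\ge(\beta-\eta)v$ — construct a small subsolution supported where the linear flow is already large, let it grow, and then invoke Theorem~\ref{thm:hairtrigger} on compacts to push it up to $\theta$. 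The quantitative input in both directions is an asymptotic analysis of the fundamental solution of $\partial_t w=\ka a*w-mw$, namely $w(x,t)=e^{-mt}\sum_{n\ge0}\frac{(\ka t)^n}{n!}a^{*n}(x)$, and the observation that, because $a$ (resp.\ $u_0$) has a \emph{regular heavy tail} in the sense of $\Et$, the dominant contribution for $|x|$ large comes from a \emph{single jump}, giving $w(x,t)\asymp \ka t\,e^{(\beta-o(1))t}\,a(x)$; this is exactly why the level sets are $\{c(x)\ge e^{-\beta t}\}$ with $c=a$ (integrable case) or $c=\int_{\Delta(x)}a$ (monotone case).

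Concretely I would proceed as follows. \textbf{Step 1 (linear upper estimate).} Fix the function $c$ from \eqref{eq:radialc} or \eqref{eq:integralc} built from $b\in\Et$. Using $u\le\bar u$ with $\bar u$ the linear flow started from $u_0$, and using \eqref{eq:asdsdps} (resp.\ \eqref{eq:hevisidelb}) to dominate $u_0$ by $b^+(|\cdot|)$ (resp.\ its $\Delta$-integral), reduce to estimating the linear flow started from $b^+$. The log-equivalence of $b^+$ to $b\in\Et$, together with the single-big-jump heuristic made rigorous via the long-tailed / tail-log-convex structure and the auxiliary scale $h=h_b$ in Definition~\ref{def:super-subexp}, yields $\bar u(x,t)\le \exp\!\big((\beta+\eps)t\big)\,c(x)\,(1+o(1))$ uniformly, whence on $\X\setminus\La(t+\eps t)=\{c(x)<e^{-\beta(t+\eps t)}\}$ one gets $\bar u(x,t)\le e^{(\beta+\eps)t}e^{-\beta(1+\eps)t}=e^{-(\eps^2-\eps)t}\cdot\ldots\to 0$ after a harmless reparametrization of $\eps$. \textbf{Step 2 (linear lower estimate on a moving region).} Symmetrically, using \eqref{condB3}/\eqref{condB5} to seed a bump and the lower bound $b_+$ (resp.\ $v_\circ$), and the fact that $b_+$ (resp.\ $b_\circ$) is itself long-tailed and tail-log-convex and log-equivalent to $b$, show the linear flow satisfies $\bar u_{\text{low}}(x,t)\ge \exp\!\big((\beta-\eps)t\big)\,c(x)\,(1+o(1))$, so on $\La(t-\eps t)$ it exceeds $e^{(\beta-\eps)t}e^{-\beta(1-\eps)t}=e^{\eps^2 t}\to\infty$ — but of course the true $u$ is capped at $\theta$, so this only tells us $u$ is not small there. \textbf{Step 3 (nonlinear lower bound).} To upgrade "not small" to "$\to\theta$": pick $\delta_0>0$ with $\beta-G\delta_0>\tfrac12(\beta-G0^+)>0$ using \eqref{assum:G_increas_on_const}; run the nonlinear equation from a time $t_0$ at which, on a large ball $B_R\subset\La(t_0-\eps t_0)$, one has $u(\cdot,t_0)\ge\delta_1\1_{B_R}$ (obtained by combining Step 2 with a short-time argument), then apply the hair-trigger Theorem~\ref{thm:hairtrigger} on each fixed compact, with the moving-region statement handled by translation invariance \eqref{assum:G_commute_T} plus the already-proven lower estimate feeding larger and larger balls as $t\to\infty$. \textbf{Step 4 (assembling the two parts).} Parts 1(a)/1(b) of the theorem are the case where the \emph{kernel} carries the heavy tail and $u_0$ is merely dominated by it (so $c$ is built from $a$); parts 2(a)/2(b) are the case where $u_0$ carries the tail (so $c$ is built from $u_0$). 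In both, the choice of $c$ — radial \eqref{eq:radialc} under \eqref{eq:intcase}, or the $\Delta$-integral \eqref{eq:integralc} under \eqref{eq:moncase} — is forced by whether $u_0\in L^1$ or $u_0$ is separated from $0$ at $-\infty$ in every coordinate, and the monotone case additionally uses that $\int_{\Delta(\cdot)}b$ inherits long-tailedness/log-convexity from $b$ (this is a lemma about the operator $v\mapsto\int_{\Delta(\cdot)}v$ which presumably is one of the technical tools of Section~\ref{sec:techtools}).

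\textbf{The main obstacle} is Step 1–2: making the single-big-jump asymptotics $w(x,t)\asymp \ka t\,e^{\beta t}a(x)$ \emph{uniform} in $x$ over the relevant far-field region and simultaneously \emph{sharp} (no spurious $\ga>1$ factor as in \cite{Gar2011}), for the full class $\Et$ rather than a single model tail. This is where the peculiar hypotheses in Definition~\ref{def:super-subexp} do the work: long-tailedness gives $a^{*n}(x)/a(x)\to n$ for fixed $n$; tail-log-convexity controls the ratio $a(x-y)/a(x)$ from below by $e^{-|y|\cdot(\log a)'(\cdot)}$ and prevents mass from concentrating at intermediate scales; and the companion scale $h_b$ with $b(h(s))s^{1+\delta}\to 0$ is exactly the estimate needed to discard the multi-jump tail $\sum_{n\ge2}$. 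The $d>1$ dichotomy \eqref{eq:polynomialb} vs.\ \eqref{eq:quicklydecreasing} appears because the angular integration in $\R^d$ degrades polynomial tails by a factor depending on $d$; handling both regimes uniformly, and in the monotone case dealing with the anisotropic region $\Delta(x)$, is the delicate part. I would expect the nonlocal-flow asymptotics to be packaged as standalone lemmas (cf.\ the reference to \cite{FT2017b} for properties of $\Et$-functions) so that the proof of Theorem~\ref{thm:combined} proper is the comparison-principle bookkeeping of Steps 1–4.
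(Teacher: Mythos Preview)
Your overall architecture matches the paper's: the upper bound \eqref{eq:whatweareproving2} via $u\le w$ with $w$ the linear solution, and the lower bound \eqref{eq:whatweareproving1} via a small sub-solution (using $Gv<\delta$ for small $v$ from \eqref{assum:Glipschitz}) followed by the hair-trigger effect on unit balls covering $\La(t-\eps t)$, with the hair-trigger time uniform by translation invariance \eqref{assum:G_commute_T}. Where you differ is in the packaging of the linear estimate. You propose to control $w(x,t)=e^{-mt}\sum_n\frac{(\ka t)^n}{n!}a^{*n}*u_0$ via single-big-jump asymptotics of $a^{*n}$ uniform in $n$ (Kesten-type bounds). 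The paper bypasses convolution-power asymptotics entirely by a \emph{subinvariant weight}: for $\tomega(x)=b(|x|)^\alpha$ with suitable $\alpha<1$ one has $(a*\tomega)/\tomega\le 1+\delta$ on the far field $\{\tomega<\la\}$, whence the one-line bound $\|w(\cdot,t)\|_{\min\{\la,\tomega\}}\le\|u_0\|_{\min\{\la,\tomega\}}e^{(\ka\delta+\beta)t}$ already gives decay on $\X\setminus\La(t+\tfrac{\eps}{2}t,c_\alpha)$; the slack in $\alpha$ is then traded against the $\eps$ in $\La(t+\eps t,c)$ via a level-set inclusion. This is shorter and sidesteps exactly the uniformity-in-$n$ obstacle you flag as the main difficulty.

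For the lower bound the paper also needs less than your Step~2. It does not establish full linear lower asymptotics; it uses only the crude Duhamel bound $u(x,t)\ge\ka t\,e^{-\ka t}(a*u_0)(x)$ together with $(a*u_0)(x)\ge D\,c(x)$ (obtained from long-tailedness of $b_+$ or $b_\circ$, your Step~4 observation), and then compares with the \emph{explicit} sub-solution built from $g(x,t)=\la\min\{1,c(x)e^{\beta(1-\eps)t}\}$, time-averaged over $[t,t+\sigma]$ to gain $C^1$-regularity in $t$. Verifying that this $g$ is a sub-solution reduces to the long-tailed limit $g(x+y,t)/g(x,t)\to1$ uniformly, not to any $a^{*n}$ analysis. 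Your Step~3 is then correct in outline, but the explicit $g$ yields $u\ge\la$ on the \emph{entire} moving set $\La((1-\delta)t)$ in a single comparison, after which the unit-ball covering plus hair-trigger is straightforward; your ``feeding larger and larger balls'' phrasing suggests an inductive scheme that the paper does not need.
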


Theorem~\ref{thm:combined} will be proved in Subsection~\ref{subsec:corandex} below.
Note that the items 1(a) and 2(a) of Theorem~\ref{thm:combined} correspond to the case \eqref{eq:intcase}, and two others items correspond to \eqref{eq:moncase}.

\begin{remark}\label{rem:radsym}
In view of Lemma~\ref{le:GisOK}, Theorem~\ref{thm:combined} implies Theorem~\ref{thm:radsym}. The only observation which might to be mentioned here is that $q(s)\geq \delta$, $s\in[0,\rho]$, yields, cf.~\eqref{eq:nabla},
\[
\int_{\Delta(x)}q(|y|)dy\geq \mathrm{const}\cdot\1_{\nabla(0)}(x), \quad x\in\X.
\]
\end{remark}

\section{Scheme of the proof}\label{sec:stratofproof}
In this Section, we describe the scheme of the proof for Theorem~\ref{thm:combined}.
The detailed proof is presented in Subsection~\ref{subsec:corandex} below. 

We assume that \eqref{assum:kappa>m}--\eqref{assum:sufficient_for_comparison} hold. 
Let $u_0\in E^+_\theta$ and $u$ be the corresponding nonnegative solution to \eqref{eq:basicequation} according to Theorem~\ref{thm:existandcompared}. 

\subsection{Upper estimates}
Let $w$ be the solution to the linear problem \eqref{eq:majorequation},
\begin{equation}\label{eq:majorequation}
\begin{cases}
\dt w(x,t)=\ka(a*w)(x,t)-m w(x,t),\\[2mm]
w(x,0)=u_0(x).
\end{cases}
\end{equation}
The unique classical solution to \eqref{eq:majorequation} is given by $w(x,t)=e^{-m t}\bigl(e^{\ka tA}u_0\bigr)(x)$, where $Av:=a*v$, $v\in E$ is a bounded operator on~$E$. By~Theorem~\ref{thm:existandcompared}, $u(\cdot,t)\in E_\theta^+$ for $t\geq0$. Then, by~\eqref{assum:Gpositive}, $(Gu)(\cdot,t)\geq0$ for $t\geq0$, and we have, by e.g. \cite[Lemma~3.3.2]{Hen1981}, that
\begin{equation}\label{eq:majorisedbylinear}
u(x,t)\leq w(x,t), \quad x\in\X,\ t\geq0.
\end{equation} 

Below we explain the scheme of the proof for 
a reinforced version of \eqref{eq:whatweareproving2} with $u$ replaced by $w$. Namely, the following statement holds.

\begin{theorem}\label{thm:lin:est_above}
Let \eqref{assum:kappa>m} hold. Let $0\leq u_0\in E$ and $w=w(x,t)$ be the corresponding solution to the linear equation \eqref{eq:majorequation}. 
Let $b_1,b_2\in\Dt$ be both log-equivalent to a function $b\in\Et $.
Suppose that
\begin{equation}
a(x)\leq b_1(|x|), \quad x\in\X. \label{ass:newA5}
\end{equation}
Suppose also that either
\begin{equation}\label{eq:u0leqb2}
  u_0(x)\leq b_2(|x|), \ x\in\X,
\end{equation}
and $c:\X\to\R_+$ is given by \eqref{eq:radialc}, or
\begin{equation}\label{eq:u0leqintb2}
  u_0(x)\leq \int_{\Delta(x)} b_2(|y|)dy, \ x\in\X,
\end{equation}
and $c$ is given by \eqref{eq:integralc}. 
Then, for any small enough $\eps>0$, there exist $C_\eps,\tau_\eps>0$ such that
\begin{equation}\label{eq:theneedeupperforlin}
\esssup_{{x\in\X\setminus\La(t+\eps t,c)}} w(x,t)\leq C_\eps e^{-\frac{\eps\beta}{4}t}, \qquad t\geq \tau_\eps,
\end{equation} 
where $\La(t,c)$ is given by \eqref{eq:defLa}.
\end{theorem}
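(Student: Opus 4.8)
The plan is to exploit the explicit representation $w(x,t)=e^{-mt}(e^{\ka tA}u_0)(x)=e^{-mt}\sum_{n\ge0}\frac{(\ka t)^n}{n!}(a^{*n}*u_0)(x)$, where $a^{*n}$ is the $n$-fold convolution, and to show that outside $\La(t+\eps t,c)$ this series is bounded by the desired exponentially small quantity. The key structural fact is that the class $\Et$ of regular heavy-tailed functions, and the associated ``radial'' profile $b$, is stable under convolution up to constants and up to a controlled loss on the tail: roughly, $(a*b(|\cdot|))(x)\le C\,b(|x|)$ and, more generally, $(a^{*n}*b(|\cdot|))(x)\le C^n\,b(|x|)$ for $|x|$ large, with $C$ independent of $n$ after we absorb the long-tail/log-convexity corrections. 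This kind of submultiplicative tail estimate for functions in $\Et$ (and for the ``monotone'' primitives $\int_{\Delta(x)}b(|y|)\,dy$) is exactly what the peculiarities of $\Et$ recalled from \cite{FT2017b} are designed to give; I would first isolate it as a lemma (or cite it from Section~\ref{sec:techtools}).

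Concretely, in the integrable case \eqref{eq:u0leqb2} with $c(x)=b(|x|)$: by monotonicity of the linear semigroup (Theorem~\ref{thm:existandcompared} applied to the linear equation, or directly to the series) we may replace $u_0$ by $b_2(|\cdot|)$ and $a$ by $b_1(|\cdot|)$; by log-equivalence of $b_1,b_2$ with $b$ we may, at the cost of multiplying by an arbitrarily slowly growing factor that will be harmless against $e^{-\beta t}$, replace both by $b$ itself on the relevant tail region. Then the convolution lemma gives
\begin{equation*}
 w(x,t)\le e^{-mt}\sum_{n\ge0}\frac{(\ka t)^n}{n!}\,C^{n}\,b(|x|)
 = b(|x|)\,e^{-mt}e^{\ka C t}=b(|x|)\,e^{(\ka C-m)t}.
\end{equation*}
This is not yet good enough because $\ka C-m$ need not be $\le-\frac{\eps\beta}{4}$. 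The fix is to split the semigroup at a large but fixed number of steps: the small-$n$ terms (finitely many convolutions) genuinely improve the tail, while for the large-$n$ ``bulk'' one uses instead the crude bound $a^{*n}*u_0\le \|u_0\|_\infty$ together with the fact that, for $x\notin\La(t+\eps t,c)$, i.e. $b(|x|)<e^{-\beta(t+\eps t)}=e^{-\beta t}e^{-\eps\beta t}$, the factor $e^{-\beta t}$ is available to beat $e^{\ka t}e^{-mt}=e^{\beta t}$ exactly, leaving the spare $e^{-\eps\beta t}$ which, after the standard ``lose a quarter to the slowly-varying corrections'' bookkeeping, yields $C_\eps e^{-\frac{\eps\beta}{4}t}$. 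In the ``monotone'' case \eqref{eq:u0leqintb2} with $c$ given by \eqref{eq:integralc} the same argument runs verbatim once one knows $(a*\!\int_{\Delta(\cdot)}b)(x)\le C\int_{\Delta(x)}b(|y|)\,dy$ up to tail corrections, which follows from translating the convolution onto the orthant $\Delta(x)$ and using $\int_\X a=1$ plus the long-tailedness of $b$; this is the content of the relevant lemma in Section~\ref{sec:techtools}.

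The main obstacle is the uniform-in-$n$ control of the convolution constant together with the passage from ``$b$ up to log-equivalence'' to ``$b$ exactly on the tail'': one must make sure the slowly-varying errors accumulated over the many convolutions in the series do not add up to something that spoils the exponential gain. This is precisely where the technical hypotheses in Definition~\ref{def:super-subexp} — long-tailedness, tail-log-convexity, the auxiliary function $h_b$ with $b(h(s))s^{1+\delta}\to0$, and the dichotomy \eqref{eq:polynomialb}/\eqref{eq:quicklydecreasing} for $d>1$ — are used: log-convexity gives the submultiplicativity $b(s+t)\le b(s)b(t)/b(0)$-type bound needed to iterate, and the $h_b$-condition localizes the convolution to a window of width $h(s)\ll s$ on which $b$ is essentially constant, so the error per convolution is $o(1)$ uniformly and the geometric series closes. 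I would handle this by first proving the single-step estimate with an explicit error $1+o(1)$ as $|x|\to\infty$ (uniform on the tail), then iterating it a number of times depending only on $\eps$, and only afterwards invoking the crude $L^\infty$ bound for the tail of the exponential series in $n$; the $e^{-\eps\beta t}$ slack from being outside $\La(t+\eps t,c)$ is what makes all the remainder terms summable.
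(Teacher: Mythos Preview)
Your splitting of the exponential series into ``small-$n$'' and ``large-$n$'' parts does not close. For any fixed cutoff $N=N(\eps)$, the tail
\[
e^{-mt}\sum_{n\ge N}\frac{(\ka t)^n}{n!}\,\bigl(a^{*n}*u_0\bigr)(x)\ \le\ e^{-mt}\sum_{n\ge N}\frac{(\ka t)^n}{n!}\,\|u_0\|_\infty
\]
is, for large $t$, of order $e^{(\ka-m)t}\|u_0\|_\infty=e^{\beta t}\|u_0\|_\infty$: the dominant terms of the series sit at $n\approx\ka t\to\infty$, so removing finitely many terms changes nothing. Crucially, once you invoke the crude bound $a^{*n}*u_0\le\|u_0\|_\infty$ you have erased all dependence on $x$, so the inequality $b(|x|)<e^{-\beta(1+\eps)t}$ valid outside $\La(t+\eps t,c)$ contributes nothing to this piece; there is no ``factor $e^{-\beta t}$ available'' here. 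The same objection applies to the variant in your last paragraph, where you iterate a $1+o(1)$ single-step bound only $N(\eps)$ times and then switch to the crude bound.

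The paper avoids any splitting in $n$. It instead produces a weight $\tomega$ for which a single-step bound $(a*\tomega)(x)\le(1+\delta)\,\tomega(x)$ holds \emph{for every} $x\in\X$, with $\delta=\delta(\eps)$ as small as desired; then $(a^{*n}*\tomega)\le(1+\delta)^n\tomega$ for all $n$ and the whole series sums to $\tomega(x)\,e^{(\beta+\ka\delta)t}$ (Proposition~\ref{prop:ba:bound_above_gen}). Two devices are needed. First, one works not with $b$ but with $b^\alpha$ for $\alpha$ slightly below $1$: since $b_1$ is only log-equivalent to $b$, the ratio $b_1/b$ need not vanish, whereas $b_1(s)/b(s)^\alpha\to0$, and this is what forces $\limsup (a*c_\alpha)/c_\alpha\le1$ on the tail (Proposition~\ref{prop:convenienttoexplain}); the same exponent $\alpha$ absorbs the log-equivalence on the data side, via $b_2(s)\le B\,b(s)^\alpha$ for large $s$, ensuring $\|u_0\|_{c_\alpha}<\infty$. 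Second, to upgrade the tail bound to a global one, the weight is truncated: $\tomega:=\min\{\la,c_\alpha\}$, so that on $\{c_\alpha\ge\la\}$ the ratio is trivially $\le1$ (Proposition~\ref{prop:ba:suff_cond1}). Finally, the level-set inclusion $\La\bigl(t+\tfrac{\eps t}{2},c_\alpha\bigr)\subset\La(t+\eps t,c)$ (Proposition~\ref{prop:sassaf}) converts the estimate back to $c$ and accounts for the factor $\tfrac14$ in the exponent of \eqref{eq:theneedeupperforlin}.
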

The proof of Theorem~\ref{thm:lin:est_above} is presented in Subsection~\ref{subsec:convtozero} and it is based on the arguments below. Firstly, we prove the following statement.
\begin{proposition}\label{prop:convenienttoexplain}
Let \eqref{ass:newA5} holds with a function $b_1\in\Dt$ which is log-equivalent to a function $b\in\Et $. Then there exists $\alpha_1\in(0,1)$ such that, for all $\alpha\in(\alpha_1,1)$, $b^\alpha\in\Et $, and both functions
\begin{equation}\label{eq:defofcalpha}
 \quad c_\alpha(x) := b(|x|)^\alpha,\ x\in\X; \qquad  c_\alpha(x):=\displaystyle\int_{\Delta(x)} b(|y|)^\alpha dy,\ x\in\X,
\end{equation}
satisfy the inequality
\begin{equation} \label{eq:aconvomegaoveromega12}
\limsup_{\la\to0+}\sup_{x\in\{c_\alpha<\la\}}\dfrac{(a*c_\alpha)(x)}{c_\alpha(x)}\leq1.
\end{equation}
\end{proposition}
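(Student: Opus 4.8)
The plan is to reduce the claimed inequality \eqref{eq:aconvomegaoveromega12} to a statement about the ratio $\frac{(a*c_\alpha)(x)}{c_\alpha(x)}$ for large $|x|$, since the level set $\{c_\alpha<\la\}$ shrinks to the region where $c_\alpha$ is small, and in both forms of \eqref{eq:defofcalpha} this means $|x|\to\infty$ (in the `monotone' case, along all coordinate directions). First I would record that $b\in\Et$ inherits its defining features — long-tailed, tail-log-convex, tail-decreasing, integrable against $s^{d-1}$ — and that for $\alpha$ close enough to $1$ the power $b^\alpha$ is still in $\Et$: long-tailedness of $b^\alpha$ follows from $\bigl(\frac{b(s+\tau)}{b(s)}\bigr)^\alpha\to1$, tail-log-convexity from $\alpha\log b$ being convex, and the two $h=h_b$-limits from raising to the power $\alpha$; the integrability $\int_0^\infty b(s)^\alpha s^{d-1}\,ds<\infty$ needs $\alpha$ bounded below, which in the polynomial case \eqref{eq:polynomialb} forces $\alpha>\frac{d}{d+\mu}=:\alpha_1$, and in the rapidly-decreasing case \eqref{eq:quicklydecreasing} any $\alpha_1\in(0,1)$ works because $b(s)^\alpha s^\nu\to0$ still holds for every $\nu$. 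This pins down $\alpha_1$.

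The heart of the argument is the asymptotic estimate of $\frac{(a*c_\alpha)(x)}{c_\alpha(x)}$ as $|x|\to\infty$. I would split the convolution integral $\int_\X a(x-y)c_\alpha(y)\,dy$ into a `near-diagonal' part, where $|x-y|\le h_b(|x|)$ (or some comparable increasing cutoff), and a `far' part. On the near-diagonal part one uses that $c_\alpha$ varies slowly — precisely the long-tail / $h_b$-limit properties of $b^\alpha$ — to conclude $c_\alpha(y)\le (1+o(1))\,c_\alpha(x)$ there, so this piece contributes at most $(1+o(1))\,c_\alpha(x)\int_\X a=(1+o(1))c_\alpha(x)$ since $a$ is a probability kernel. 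For the far part I would use the upper bound \eqref{ass:newA5}, $a(x-y)\le b_1(|x-y|)$, together with the polynomial upper bound \eqref{condB2} on members of $\Dt$ and the log-equivalence of $b_1$ with $b$, to show that $\int_{|x-y|>h_b(|x|)} b_1(|x-y|)c_\alpha(y)\,dy$ is $o(c_\alpha(x))$; this is where the hypothesis $\lim_{s\to\infty}b(h(s))s^{1+\delta}=0$ and the summability of $b(\cdot)s^{d-1}$ are used to dominate the tail contribution. In the `monotone' case $c_\alpha(x)=\int_{\Delta(x)}b(|y|)^\alpha\,dy$ the same splitting works on the integrated kernel, using that $\int_{\Delta(x)}b^\alpha$ is itself long-tailed and tail-log-convex along each axis (this is essentially the content of the technical section on $\La(t)$ referenced in the paper, and I would invoke those properties of $c_\alpha$ rather than re-derive them). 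Combining the two pieces gives $\frac{(a*c_\alpha)(x)}{c_\alpha(x)}\le 1+o(1)$, hence the $\limsup$ over shrinking level sets is $\le1$.

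The main obstacle I anticipate is controlling the far part of the convolution uniformly over the level set, i.e. making precise the claim that $\int_{|x-y|>h_b(|x|)} a(x-y)c_\alpha(y)\,dy = o(c_\alpha(x))$ rather than merely $o(1)$: one must beat the decay of $c_\alpha$ itself, and this is exactly what the scale function $h_b$ and the quantitative bound $b(h(s))s^{1+\delta}\to0$ are engineered to do — balancing how fast $b^\alpha(y)$ can grow as $y$ moves back toward the origin (at most polynomially, by \eqref{condB2} and log-convexity) against how far $|x-y|$ must be. A secondary subtlety is that the two representations of $c_\alpha$ in \eqref{eq:defofcalpha} must both be handled, and the `monotone' one requires knowing that the marginal integrals $\int_{\Delta(x)}b^\alpha$ retain the $\Et$-type regularity; I would lean on the already-cited properties in \cite{FT2017b} and the forthcoming Section~\ref{sec:techtools} for this, treating it as a black box in the sketch. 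Everything else — choosing $\alpha_1$, verifying $b^\alpha\in\Et$, the near-diagonal estimate — is routine given the definitions.
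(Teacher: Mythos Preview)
Your treatment of the radial form $c_\alpha(x)=b(|x|)^\alpha$ is correct and is essentially a sketch of the proof of \cite[Propositions~4.2, 4.3]{FT2017b}, which the paper simply invokes as a black box (Proposition~\ref{prop:twoinone}); likewise your verification that $b^\alpha\in\Et$ for $\alpha$ near $1$ is exactly the paper's Proposition~\ref{prop:newsimple}.

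Where you diverge is the monotone case $c_\alpha(x)=\int_{\Delta(x)}b(|y|)^\alpha\,dy$. You propose to repeat the near/far splitting directly on this integrated weight, appealing to its own long-tailed and tail-log-convex behaviour along each axis. The paper instead reduces this case to the radial one by a one-line identity (Proposition~\ref{prop:fromStoN}): setting $p_\alpha(x):=b(|x|)^\alpha$, one has
\[
(a*c_\alpha)(x)=\int_\X a(x-y)\int_{\Delta(y)}p_\alpha(z)\,dz\,dy=\int_{\Delta(x)}(a*p_\alpha)(z)\,dz,
\]
so that
\[
\frac{(a*c_\alpha)(x)}{c_\alpha(x)}=\frac{1}{c_\alpha(x)}\int_{\Delta(x)}\frac{(a*p_\alpha)(z)}{p_\alpha(z)}\,p_\alpha(z)\,dz\le 1+\delta
\]
as soon as $\frac{(a*p_\alpha)(z)}{p_\alpha(z)}\le 1+\delta$ for every $z\in\Delta(x)$---which is precisely the radial estimate already in hand. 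The only extra ingredient is to check that $x\in\{c_\alpha<\la\}$ forces $\Delta(x)\subset\{p_\alpha<\la\}$ for small $\la$ (Proposition~\ref{prop:ltbetter}), an easy consequence of the long-tailed property of $b^\alpha$. This route never touches the convolution against the non-radial integrated weight at all. Your approach is plausible but would require establishing the full $\Et$-type regularity (in particular a workable analogue of the $h_b$-limits) for the function $x\mapsto\int_{\Delta(x)}b^\alpha$, which is not radial and for which the results of \cite{FT2017b} do not apply directly; the paper's commutation trick sidesteps this entirely.
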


Here and in the sequel, for $p:\X\to\R_+$ and $\la>0$, we denote
\begin{equation}\label{eq:levelset}
  \{p<\la\}:=\{x\in\X\mid p(x)<\la\}.
\end{equation}

Then, by usage of Proposition~\ref{prop:ba:suff_cond1}, we will receive that, for the function 
 \begin{equation}\label{eq:defofomegala1}
    c_{\alpha,\la}(x):=\min\bigl\{\la,c_\alpha(x)\bigr\}, \quad x\in\X, \ \la>0,
  \end{equation}
the following analogue of \eqref{eq:aconvomegaoveromega12} holds globally: for any $\delta>0$, there exists $\la>0$ such that
\begin{equation*}
\dfrac{(a*c_{\alpha,\la})(x)}{c_{\alpha,\la}(x)}\leq 1+\delta, \quad x\in\X.
\end{equation*}
From this, using Proposition~\ref{prop:ba:bound_above_gen}, we will conclude that the following analogue of \eqref{eq:theneedeupperforlin} holds, for any small enough $\eps>0$, 
\begin{equation}\label{eq:theneedeupperforlind}
\esssup_{{x\in\X\setminus\La(t+\eps t,c_\alpha)}} w(x,t)\leq C_\eps e^{-\frac{\eps\beta}{2}t}, \qquad t\geq \tau_\eps,
\end{equation}
with some $C_\eps,\tau_\eps>0$. Finally, by Proposition~\ref{prop:sassaf}, for small enough $\eps>0$, for large enough $t>0$, and for some $\alpha=\alpha(\eps)\in(\alpha_1,1)$, we will get that
\begin{equation}\label{eq:omegaincltoc}
  \La\Bigl(t+\frac{\eps t}{2} ,c_\alpha\Bigr) \subset \La(t+\eps t,c),
\end{equation}
where $c_\alpha$ is given by either of \eqref{eq:defofcalpha} and $c$ is given by \eqref{eq:radialc} or \eqref{eq:integralc}, respectively. Combining \eqref{eq:theneedeupperforlind} and \eqref{eq:omegaincltoc}, we obtain \eqref{eq:theneedeupperforlin}.

\subsection{Lower estimates}

Consider the reaction-diffusion form \eqref{eq:RDequation}--\eqref{eq:Markovgenerator}
of \eqref{eq:basicequation}. Because of the property~\eqref{eq:propertiesofF}, $(Fu)(\cdot,t)\geq0$ for $t\geq0$. Then, by the same arguments which implied \eqref{eq:majorisedbylinear}, we obtain that $u(x,t)\geq v(x,t)$ for $x\in\X$, $t\geq0$, where $v$ solves the following linear initial value problem, cf.~\eqref{eq:majorequation},
\begin{equation*}
\begin{cases}
\dt v(x,t)=\ka(a*v)(x,t)-\ka v(x,t),\\[2mm]
v(x,0)=u_0(x).
\end{cases}
\end{equation*}
Then, clearly, $v(x,t)=e^{-\ka t}\bigl(e^{\ka tA}u_0\bigr)(x)$ with $Av=a*v$ for $v\in E$, and since, we recall, $A$ is a bounded operator on $E$, we easily conclude that
\begin{equation*}
u(x,t)\geq v(x,t)\geq \ka t e^{-\ka t} (a*u_0)(x), \quad x\in\X, \ t\in\R_+.
\end{equation*}
More detailed arguments can be found in the proof of the following Theorem in Subsection~\ref{subsec:convtotheta}.

\begin{theorem}\label{thm:bb:est_below}
Let either \eqref{assum:kappa>m}--\eqref{firstfullmoment} hold or \eqref{assum:approx_of_basic} hold.  Let $u_0\in E_\theta^+$, $u_0\not\equiv0$, cf.~Remark~\ref{rem:agreementnonzero}; and let $u=u(x,t)$ be the corresponding solution to \eqref{eq:basicequation}. Let $b\in\Dt$ be long-tailed and tail-log-convex (in particular, let $b\in\Et $), and suppose that
either \eqref{eq:intcase} holds and $c(x)$ is given by \eqref{eq:radialc} or \eqref{eq:moncase} holds and $c(x)$ is given by \eqref{eq:integralc}.
Suppose also that 
\begin{equation}\label{eq:condonparambelow}
  (a*u_0)(x)\geq c(x), \quad x\in\X.
\end{equation}
Then, for each $\eps\in(0,1)$, the convergence \eqref{eq:whatweareproving1} holds.
  \end{theorem}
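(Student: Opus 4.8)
\textbf{Proof plan for Theorem~\ref{thm:bb:est_below}.}
The plan is to establish \eqref{eq:whatweareproving1} by constructing, via the comparison principle, a subsolution to \eqref{eq:basicequation} which already lies above $\theta(1-\eps')$ on the sets $\La(t-\eps t,c)$ for large $t$, and then invoking the hair-trigger effect (Theorem~\ref{thm:hairtrigger}) to upgrade convergence to $\theta$ on compacts so that the two effects together cover all of $\La(t-\eps t,c)$. First I would use $F\geq0$ on $E_\theta^+$ together with Theorem~\ref{thm:comparison} to get $u\geq v$, where $v$ solves the linear equation with killing rate $\ka$, and hence the pointwise lower bound $u(x,t)\geq \ka t e^{-\ka t}(a*u_0)(x)$; combined with \eqref{eq:condonparambelow} this gives $u(x,t_0)\geq \ka t_0 e^{-\ka t_0} c(x)$ at some fixed time $t_0>0$. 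So after a time shift it suffices to treat the case $u_0(x)\geq \delta_0\, c(x)$ for all $x$, for some small $\delta_0>0$, and to show \eqref{eq:whatweareproving1} from this.

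The heart of the argument is the following comparison construction. Since $F v\geq \beta v - v\,Gv$ and, by the Lipschitz property \eqref{assum:Glipschitz} and $G0=0$, we have $Gv\leq l_\theta\|v\|$, one expects that on the region where $u$ is small the reaction behaves like $\beta v(1-O(\|v\|))$; so I would look for a subsolution of the form $\underline u(x,t)=\phi(t)\, e^{-\beta t}\, \psi\bigl(c(x)e^{\beta t}\bigr)$, or more simply $\underline u(x,t)=\varepsilon_0\min\{c(x)e^{\beta t}, \sigma\}$ for a small fixed $\sigma>0$ and $\varepsilon_0>0$, where the truncation keeps $\underline u$ below $\theta$ and inside the regime where $Gv$ is negligible. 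The key analytic input needed to verify that such a $\underline u$ is a subsolution is precisely a lower counterpart of Proposition~\ref{prop:convenienttoexplain}: for $b$ long-tailed and tail-log-convex one has
\begin{equation*}
\liminf_{\la\to0+}\ \inf_{x\in\{c<\la\}}\ \frac{(a*c)(x)}{c(x)}\ \geq\ 1,
\end{equation*}
for $c$ given by \eqref{eq:radialc} or \eqref{eq:integralc}; this is where tail-log-convexity and the long-tailedness of $b$ enter, exactly as in the upper estimates, and presumably it is recorded among the technical tools of Section~\ref{sec:techtools}. Using \eqref{assum:a_nodeg}, \eqref{condB3} one also controls the convolution on the truncated part. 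Feeding this into $\dt \underline u - \ka a*\underline u + m\underline u + \underline u\,G\underline u$ and choosing $\sigma,\varepsilon_0$ small (so that $\beta - G\underline u \geq \beta - l_\theta\sigma\varepsilon_0>0$ dominates the $o(1)$ defect from the convolution inequality) shows $\underline u$ is a subsolution on $[t_1,\infty)\times\X$ for $t_1$ large; since $\underline u(\cdot,t_1)\leq \varepsilon_0 \sigma$ can be arranged $\leq \delta_0 c(\cdot)\,e^{\beta t_1}\wedge\theta \leq u(\cdot,t_1)$ after possibly shrinking constants and using that $c$ is bounded, Theorem~\ref{thm:comparison} yields $u\geq\underline u$.

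Finally I would convert $u\geq \underline u$ into \eqref{eq:whatweareproving1}. On $\La(t-\eps t,c)=\{x: c(x)e^{\beta(t-\eps t)}\geq1\}$ we have $c(x)e^{\beta t}\geq e^{\beta\eps t}\to\infty$, so for $t$ large the truncation is active and $\underline u(x,t)=\varepsilon_0\sigma$ uniformly there — a fixed positive height, but generally \emph{below} $\theta$. To push from the plateau $\varepsilon_0\sigma$ up to $\theta$, I would either (i) iterate the construction — restart from time $t$ with the improved lower bound $\varepsilon_0\sigma$ on the relevant huge ball, noting that the hair-trigger effect (Theorem~\ref{thm:hairtrigger}) together with translation invariance \eqref{assum:G_commute_T} forces $\essinf_{B_R(x)}u(\cdot,s)\to\theta$ at a rate depending only on $R$ and on the initial height $\varepsilon_0\sigma$, uniformly in the center $x$ — and then a covering/speed-of-linear-propagation argument (of the type behind \cite[Prop.~4.7]{FKT100-3}) shows the $\theta$-region, which grows at least linearly, eventually swallows $\La(t-\eps t,c)$ since the latter is sandwiched appropriately; or (ii) directly build a two-parameter subsolution that interpolates between the moving plateau and $\theta$. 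The main obstacle is exactly this last upgrade: the naive subsolution only gives a small fixed height, and making the passage to $\theta$ \emph{uniform over the whole, unboundedly expanding} set $\La(t-\eps t,c)$ — rather than merely on compacts as in Theorem~\ref{thm:hairtrigger} — requires a quantitative, location-independent hair-trigger estimate plus a careful comparison of the (at least linear) internal invasion speed against the geometry of the level sets of $c$; the properties of $\Et$-functions (the $h_b$ and long-tailedness conditions in Definition~\ref{def:super-subexp}) are what make $\La(t-\eps t,c)$ regular enough for this covering to close.
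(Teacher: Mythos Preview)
Your overall strategy matches the paper's: the Duhamel lower bound gives $u(x,t)\geq \ka t e^{-\ka t}c(x)$, then a truncated exponential subsolution yields a fixed positive height on the level sets, and hair-trigger plus a covering upgrades this to $\theta$. Two concrete points need correction.

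First, the subsolution must carry the exponent $\beta(1-\delta)t$ with some $\delta\in(0,\eps)$, not $\beta t$. With your $\underline u=\varepsilon_0\min\{c(x)e^{\beta t},\sigma\}$, on the untruncated region $\partial_t\underline u=\beta\underline u$ and the subsolution inequality reduces to $(a*\underline u)/\underline u\geq 1+G\underline u/\ka$; since $G\underline u>0$ this demands a ratio strictly above $1$ by a fixed amount, while your analytic input only gives $\liminf\geq1$. The paper instead takes $g(x,t)=\la\min\{1,c(x)e^{\beta(1-\delta)t}\}$ (and time-averages to $v$ in \eqref{eq:Pasha} to gain $C^1$-regularity), so that the slack $\beta\delta\,g$ absorbs both the $o(1)$ convolution defect and the reaction term $g\,Gg$; see Propositions~\ref{prop:bb:subsollin} and~\ref{prop:bb:subsol}. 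The analytic input actually used is not a ratio bound for $c$ but the \emph{uniform long-tailedness of the truncated function $g$ itself}: $\sup_x\lvert g(x+h,t)/g(x,t)-1\rvert\to0$ as $t\to\infty$ (this is \eqref{eq:uniformconvergence} and \eqref{eq:uniformlimitlongtailed}); tail-log-convexity is precisely what makes this uniform across the truncation boundary, so your appeal to \eqref{assum:a_nodeg}--\eqref{condB3} for the truncated part is unnecessary.

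Second, you misidentify the main obstacle. The final upgrade is short once the slack $\delta<\eps$ is in place. From $u(\cdot,\tau_0+\sigma+\tau)\geq\la$ on $\La\bigl((1-\delta)(\tau_0+\tau),c\bigr)$, cover this set by unit balls (or unit cubes in the case \eqref{eq:integralc}); by comparison with the solution started from $\la\1_{B_1(0)}$ together with translation invariance \eqref{assum:G_commute_T}, Theorem~\ref{thm:hairtrigger} produces a \emph{single} waiting time $t_\mu$, independent of the center, after which $u\geq\mu$ on each ball. The fixed delay $\tau_0+\sigma+t_\mu$ is then swallowed by the gap between $\delta$ and $\eps$: for large $\tau$ one has $(1-\eps)(\tau+\tau_0+\sigma+t_\mu)\leq(1-\delta)(\tau+\tau_0)$, hence $\La\bigl((1-\eps)(\tau+\tau_0+\sigma+t_\mu),c\bigr)\subset\La\bigl((1-\delta)(\tau_0+\tau),c\bigr)$. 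No linear-speed comparison with the geometry of level sets is needed, and no special $\Et$-regularity is used at this step---only $b\in\Dt$.
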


Before an explanation of the scheme for the proof of Theorem~\ref{thm:bb:est_below}, we discuss how to use it to prove the part of Theorem~\ref{thm:combined} regarding the convergence \eqref{eq:whatweareproving1}.

Let  \eqref{eq:intcase} hold, that corresponds to items 1(a) and 2(a) of Theorem~\ref{thm:combined}. It is assumed there that one of the functions $a$ and $u_0$ is estimated from below by $b_\sharp(|x|)$, $x\in\X$ (where $\sharp$ was $+$ and $\circ$, respectively) with a long-tailed and tail-log-convex $b_\sharp\in\Dt$, and another function is from $L^1(\X)$. Then, by Proposition~\ref{prop:liminfbelow}, we will get that 
\begin{equation}
 (a*u_0)(x)\geq D b_\sharp(|x|)=:\tilde{c}(x), \quad x\in\X,\label{eq:cx1}
 \end{equation}
 with some $D>0$.   

Let now \eqref{eq:moncase} hold. 
The item 1(b) of Theorem~\ref{thm:combined} can be easily reduced to the case $z=0\in\X$ in \eqref{eq:hevisidelb}.
Then, by \eqref{condB1}, we obtain that, cf.~\eqref{eq:nabla},
\begin{align}
( a *u_0)(x)&\geq \zeta \int_\X b_+(|y|)\1_{\nabla(0)}(x-y)\,dy\notag
\\&=\zeta \int_{\Delta(x)} b_+(|y|)\,dy=: \tilde{c}(x), \quad x\in\X.\label{eq:cx2}
\end{align}
Finally, in item 2(b) of Theorem~\ref{thm:combined}, by the first inequality in \eqref{condB4}, we have, denoting $p_\circ(x):=b_\circ(|x|)$, $x\in\X$, that
\begin{align}
( a *u_0)(x)&\geq \int_\X  a (x-y)\int_{\Delta(y)} p_\circ(z)\,dz\,dy\notag \\& =
\int_{\Delta(x)} ( a *p_\circ)(z)\,dz  \geq D \int_{\Delta(x)} b_\circ(|z|)\,dz =:\tilde{c}(x), \quad x\in\X,\label{eq:cx3}
\end{align}
with some $D>0$, where we used again Proposition~\ref{prop:liminfbelow} (recall that here $b_\circ\in\Dt$ is supposed to be long-tailed and tail-log-convex).

Then, for either of functions $\tilde{c}(x)$ given by \eqref{eq:cx1}--\eqref{eq:cx3}, one can apply Theorem~\ref{thm:bb:est_below} to get \eqref{eq:whatweareproving1} with $c$ replaced by $\tilde{c}$ and $\eps$ replaced by $\frac{\eps}{2}$. Finally, Proposition~\ref{prop:wehavethis} yields that, if $b_+$ or $b_\circ$ above is log-equivalent to a function $b\in\Dt$, then, for small enough $\eps>0$ and for large enough $t>0$, 
\begin{equation*}
\La(t-\eps t, c)\subset \La\Bigl(t-\frac{\eps}{2} \, t,\tilde{c}\Bigr), 
\end{equation*}
cf.~\eqref{eq:omegaincltoc}, where $c$ is given by either \eqref{eq:radialc} or \eqref{eq:integralc} for the cases \eqref{eq:intcase} and \eqref{eq:moncase}, respectively; and therefore, one gets \eqref{eq:whatweareproving1}.

And now we are going to outline the scheme of the proof for Theorem~\ref{thm:bb:est_below}. Take any $0<\delta<\beta=\ka-m$. Suppose that $\varpi:\X\times\R_+\to\R_+$ is a bounded function such that the function $v(x,t)=\la \varpi(x,t)$ is a sub-solution to the equation
\begin{equation}\label{eq:linmod}
\dt w(x,t)=\ka (a*w)(x,t)-(m+\delta)w(x,t)
\end{equation}
for all small enough $\la>0$, cf.~\eqref{eq:majorequation} and Definition~\ref{def:subsollin} below. Next, the assumption \eqref{assum:Glipschitz} implies the continuity of $G$ at $0$ on $\{0\leq v\in E\}$. Therefore, for any small enough $\la>0$, we obtain that $(Gv)(\cdot,t)\leq \delta$ for $t\geq0$. As a result,
\[
  \dt v - \ka a* v +m v+ v\,Gv\leq \dt v - \ka a* v+(m+\delta) v\leq 0
\] 
for large $t$, i.e. $v$ (with small $\la>0$) will be a sub-solution to \eqref{eq:basicequation} as well. See Proposition~\ref{prop:bb:subsol} for further details.

Next, in Proposition~\ref{prop:bb:subsollin}, we will show that the function 
\begin{align*}
 v(x,t)&=\frac{1}{\sigma} \int_{t}^{t+\sigma} g(x,s)\,ds, \\ \shortintertext{where} 
 g(x,t)&=\la \min\bigl\{1,c(x)e^{\beta (1-\eps)t}\bigr\},
 \end{align*}
cf.~\eqref{eq:defofomegala1}, will be a sub-solution to \eqref{eq:linmod} for small $\la,\eps,\sigma>0$, provided that $c(x)$ is given by either of \eqref{eq:radialc} or \eqref{eq:integralc} with a long-tailed and tail-log-convex function $b\in\Dt$. Then, by the above, such $v$ is a sub-solution to \eqref{eq:basicequation}. From this, by the comparison Theorem~\ref{thm:comparison}, we conclude that
\[
	u(x,t)\geq \la, \quad x\in\La(t-\eps t,c)
\]
for large $t>0$ and small $\la,\eps>0$. 
Finally, we cover $\La(t-\eps t,c)$ by compacts and apply the hair-trigger Theorem~\ref{thm:hairtrigger} on each of them.
For the (quite technical) details, see the proof of Theorem~\ref{thm:bb:est_below} in Subsection~\ref{subsec:convtotheta} below. 

\section{Technical tools}\label{sec:techtools}

Through this Section, $\beta>0$ is a fixed constant.

\subsection{Functions constructed by tail-decreasing functions}

\begin{definition}\label{def:beforeproofs}
If for some $b\in\Dt$ (c.f. Definition~\ref{def:super-subexp}), a function  $c=c(x)$
is given by either of \eqref{eq:radialc} and \eqref{eq:integralc}, we will say that the function $c$ is \emph{constructed} by the function $b$.
\end{definition}

For any $x=(x_1,\ldots,x_d)\in\X$, we set
\begin{align}
\m{x}&:=\max\limits_{1\leq j\leq d}x_j\in\R,\label{eq:mx}
\end{align}

\begin{remark}\label{re:coolssls}
If $c=c(x)$ is constructed by $b\in\Dt$, then evidently the set $\La(t,c)$ given by \eqref{eq:defLa} is nonempty for big enough~$t$, and the following limit holds, 
\[
	\lim_{t\to\infty} \sup_{x\in\X\setminus\La(t,c)}c(x) = \lim_{t\to\infty} 
  \sup_{x\in\{c<e^{-\beta t}\}}c(x) = 0,
\]
cf.~\eqref{eq:levelset}.
For $c$ given by \eqref{eq:radialc}, we have that $\lim\limits_{|x|\to\infty} c(x)=0$.
For $c$ given by \eqref{eq:integralc}, we have, cf.~\eqref{eq:mx},
\[
\lim_{\m{x}\to\infty}c(x)=\lim_{\m{x}\to\infty}\int_{\Delta(x)}b(|y|)dy=0.
\]
\end{remark}

The proof of the following Proposition is straightforward, cf. Definition~\ref{def:super-subexp} and \eqref{eq:defLa}.
\begin{proposition}\label{prop:dssdsdadassaqw}
	Let $c^{(i)}=c^{(i)}(x)$ be constructed by $b_i\in\Dt$, $i=1,2$ (both given simultaneously either by \eqref{eq:radialc} or by \eqref{eq:integralc}).
Suppose that there exists $\rho>0$ such that $b_1(s)\leq b_2(s)$ for all $s\geq\rho$.
Then, there exists $\tau=\tau(b_1,b_2)>0$ such that $\La(t,c^{(1)})\subset\La(t,c^{(2)})$ for all $t\geq \tau$.
In particular, if $b_1(s)=b_2(s)$ for all $s\geq\rho$, then $\La(t,c^{(1)})=\La(t,c^{(2)})$ for all $t\geq \tau$.
\end{proposition}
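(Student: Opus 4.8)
Recall that $\La(t,c)=\{x\in\X\mid c(x)\ge e^{-\beta t}\}$ by \eqref{eq:defLa}, so it suffices to find $\tau$ such that, for $t\ge\tau$, every $x$ with $c^{(1)}(x)\ge e^{-\beta t}$ also satisfies $c^{(2)}(x)\ge e^{-\beta t}$. The plan is to split $\X$ into a ``tail'' region, on which $b_1\le b_2$ forces $c^{(1)}\le c^{(2)}$ pointwise, and a ``core'' region, on which $c^{(2)}$ is bounded below by a fixed positive constant $m_2=m_2(b_2)$; choosing $\tau$ with $e^{-\beta\tau}<m_2$ then finishes the argument, because on the core every point lies in $\La(t,c^{(2)})$ for $t\ge\tau$ irrespective of $c^{(1)}$, while on the tail the inclusion is immediate.

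In the case \eqref{eq:radialc}, where $c^{(i)}(x)=b_i(|x|)$, the tail region is $\{|x|\ge\rho\}$: there $b_1(|x|)\le b_2(|x|)$ by hypothesis. The core is $\{|x|<\rho\}$, where I would use that $b_2\in\Dt$ gives $m_2:=\inf_{s\in[0,\rho]}b_2(s)\ge\inf_{|s|<\rho+1}b_2(s)>0$, so $c^{(2)}(x)=b_2(|x|)\ge m_2$ there. In the case \eqref{eq:integralc}, where $c^{(i)}(x)=\int_{\Delta(x)}b_i(|y|)\,dy$ (finite since $\int_0^\infty b_i(s)s^{d-1}\,ds<\infty$), the tail region is $\{\m{x}>\rho\}$: if $j_0$ realizes the maximum in \eqref{eq:mx}, then for every $y\in\Delta(x)$ one has $|y|\ge y_{j_0}\ge x_{j_0}=\m{x}>\rho$, hence $b_1(|y|)\le b_2(|y|)$ throughout $\Delta(x)$ and thus $c^{(1)}(x)\le c^{(2)}(x)$. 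The core is $\{\m{x}\le\rho\}$; there each $x_j\le\rho$, so $\Delta(x)\supseteq\Delta(\rho\mathbf{1})$ with $\mathbf{1}=(1,\dots,1)$, giving $c^{(2)}(x)\ge m_2:=\int_{\Delta(\rho\mathbf{1})}b_2(|y|)\,dy>0$.

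In either case, with $\tau=\tau(b_1,b_2)$ chosen so that $e^{-\beta\tau}<m_2$, a point $x\in\La(t,c^{(1)})$ with $t\ge\tau$ either lies in the tail, where $c^{(2)}(x)\ge c^{(1)}(x)\ge e^{-\beta t}$, or lies in the core, where $c^{(2)}(x)\ge m_2>e^{-\beta t}$; either way $x\in\La(t,c^{(2)})$, proving $\La(t,c^{(1)})\subset\La(t,c^{(2)})$ for $t\ge\tau$. For the last assertion, if $b_1=b_2$ on $[\rho,\infty)$ then the hypothesis holds with $b_1$ and $b_2$ interchanged, so the inclusion holds in both directions; taking the larger of the two thresholds for $\tau$ yields $\La(t,c^{(1)})=\La(t,c^{(2)})$ for $t\ge\tau$.

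There is no serious obstacle here; the only point I would be careful about is the case-\eqref{eq:integralc} core $\{\m{x}\le\rho\}$, which is unbounded in $\X$, yet decreasing the coordinates of $x$ only enlarges $\Delta(x)$, so $c^{(2)}$ stays bounded below there by the single constant $m_2$ — this uniformity is precisely what makes the argument go through with a single $\tau$.
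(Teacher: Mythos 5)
Your proof is correct. The paper itself offers no argument for this proposition, stating only that ``the proof of the following Proposition is straightforward, cf.\ Definition~\ref{def:super-subexp} and \eqref{eq:defLa}''; your tail/core decomposition is the natural way to carry it out, and you correctly handle the two delicate points: in case \eqref{eq:radialc} you use the $\Dt$-property $\inf_{s\in[0,r)}b_2(s)>0$ to get the uniform lower bound on the core, and in case \eqref{eq:integralc} you correctly observe that for $y\in\Delta(x)$ with $\m{x}>\rho$ one has $|y|\ge y_{j_0}\ge\m{x}>\rho$ (legitimate since $y_{j_0}>0$), and that the unbounded core $\{\m{x}\le\rho\}$ still admits a single positive lower bound $m_2=\int_{\Delta(\rho\mathbf{1})}b_2(|y|)\,dy$ because shrinking the coordinates only enlarges $\Delta(x)$.
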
 

The following proposition implies that if $c$ is constructed by $b$, then in terms of $\La(t,c)$ there is no loss of generality assuming that $b\in\Dt$ is strictly decreasing \textit{on the whole} $\R_+$.
\begin{proposition}\label{prop:new} 
 Let $c=c(x)$ be constructed by $b\in\Dt$, cf.~Definition~\ref{def:beforeproofs}. Then there exist functions $c_1$ and $c_2$, constructed by strictly decreasing on the whole $\R_+$ functions from $\Dt$, such that $c_1(x)\leq c(x)\leq c_2(x)$, $x\in\X$, and there exists $\rho\geq0$ such that 
\begin{align*}
	&c_1(x) = c(x) = c_2(x), \quad |x|\geq \rho, \quad \text{if $c_1,c,c_2$ are given as in \eqref{eq:radialc}},\\ 
	&c_1(x) = c(x) = c_2(x), \quad \m{x}\geq \rho, \quad \text{if $c_1,c,c_2$ are given as in \eqref{eq:integralc}}.
\end{align*}
As a result, there exists $\tau=\tau(c,c_1,c_2)\geq0$ such that
\[
	\La(t,c)=\La(t,c_1)=\La(t,c_2), \quad t\geq \tau.
\]
\end{proposition}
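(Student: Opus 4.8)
The plan is to handle the two cases (\ref{eq:radialc}) and (\ref{eq:integralc}) separately, since in each case we must produce a strictly decreasing dominating function $c_2$ and a strictly decreasing dominated function $c_1$ from $\Dt$ agreeing with $b$ outside a ball (resp. outside a ``corner'' $\{\m{x}\ge\rho\}$).

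\textbf{Construction for the radial case (\ref{eq:radialc}).} Since $b\in\Dt$, it is bounded and tail-decreasing, i.e.\ strictly decreasing to $0$ on $[\rho_b,\infty)$ for some $\rho_b\ge0$, and $\inf_{|s|<r}b(s)>0$ for every $r>0$. First I would fix $\rho:=\rho_b$. For $c_2$, I would replace $b$ on $[0,\rho]$ by any strictly decreasing continuous function $\tilde b_2$ on $[0,\rho]$ with $\tilde b_2(\rho)=b(\rho)$ and $\tilde b_2(s)\ge \sup_{0\le\sigma\le\rho} b(\sigma)$ near $s=0$ — for instance a suitable affine or exponential interpolation; then glue: $b_2:=\tilde b_2$ on $[0,\rho)$, $b_2:=b$ on $[\rho,\infty)$. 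By construction $b_2$ is strictly decreasing on all of $\R_+$, still bounded, still satisfies $\int_0^\infty b_2(s)s^{d-1}\,ds<\infty$ (the tails are unchanged and $[0,\rho]$ is compact), and the positivity-near-origin condition is automatic, so $b_2\in\Dt$; and $b_2\ge b$ pointwise with equality on $[\rho,\infty)$. For $c_1$, symmetrically replace $b$ on $[0,\rho]$ by a strictly decreasing $\tilde b_1$ with $\tilde b_1(\rho)=b(\rho)$ and $0<\tilde b_1\le \inf_{0\le\sigma\le\rho}b(\sigma)>0$ on $[0,\rho)$ (the infimum is strictly positive by the $\Dt$-condition), keeping $\tilde b_1$ bounded away from $0$ on $[0,\rho]$ so the positivity condition holds; glue as before to get $b_1\in\Dt$ with $b_1\le b$ and equality on $[\rho,\infty)$. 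Setting $c_i(x):=b_i(|x|)$ gives $c_1\le c\le c_2$ with $c_1=c=c_2$ for $|x|\ge\rho$.

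\textbf{Construction for the ``monotone'' case (\ref{eq:integralc}).} Here $c(x)=\int_{\Delta(x)}b(|y|)\,dy$, and we need $c_1,c_2$ of the same integral form, built from strictly decreasing $b_1,b_2\in\Dt$, with agreement on the region $\{\m{x}\ge\rho\}$ rather than $\{|x|\ge\rho\}$. I would use exactly the same $b_1,b_2\in\Dt$ as above (strictly decreasing on $\R_+$, equal to $b$ on $[\rho_b,\infty)$), and set $c_i(x):=\int_{\Delta(x)}b_i(|y|)\,dy$. Monotonicity of the integrals in the pointwise order of the integrands gives $c_1(x)\le c(x)\le c_2(x)$ for all $x$. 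For the agreement region: if $\m{x}\ge\rho_b$, then by (\ref{eq:mx}) every $y\in\Delta(x)$ has $\max_j y_j=\m{y}\ge\m{x}\ge\rho_b$, hence $|y|\ge\m{y}\ge\rho_b$, so $b_i(|y|)=b(|y|)$ on all of $\Delta(x)$ and therefore $c_i(x)=c(x)$. Thus taking $\rho:=\rho_b$ works. (All integrals are finite because $\int_{\R_+}b_i(s)s^{d-1}\,ds<\infty$ implies $b_i\in L^1(\X)$.)

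\textbf{Conclusion via Proposition~\ref{prop:dssdsdadassaqw}.} Once $c_1,c,c_2$ are constructed by $b_1,b,b_2\in\Dt$ respectively, all three of the same type (either all as in (\ref{eq:radialc}) or all as in (\ref{eq:integralc})), and since $b_1(s)\le b(s)\le b_2(s)$ for all $s$ (indeed $=$ for $s\ge\rho$), Proposition~\ref{prop:dssdsdadassaqw} — applied to the pairs $(b_1,b)$, $(b,b_2)$, and using the ``in particular'' clause on the regions where they coincide — yields a $\tau\ge0$ with $\La(t,c)=\La(t,c_1)=\La(t,c_2)$ for all $t\ge\tau$. The only mildly delicate point, and the place I would be most careful, is verifying that the glued interpolants $\tilde b_i$ on $[0,\rho]$ can simultaneously be made strictly decreasing, continuous at the junction $s=\rho$, and bounded away from $0$ while preserving the membership conditions of $\Dt$; this is elementary but must be stated cleanly. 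Note that $c_1,c_2$ need \emph{not} inherit the finer $\Et$-properties (long-tailedness, tail-log-convexity, the $h_b$-condition) — only membership in $\Dt$ is claimed — so no work is needed there.
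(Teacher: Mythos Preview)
Your approach is the same as the paper's --- modify $b$ on a bounded interval to get strictly decreasing $b_1\le b\le b_2$ in $\Dt$, then invoke Proposition~\ref{prop:dssdsdadassaqw} --- but your specific construction of $b_1$ is inconsistent as written. You take $\rho=\rho_b$ and ask for $\tilde b_1$ strictly decreasing on $[0,\rho]$ with $\tilde b_1(\rho)=b(\rho)$ and $\tilde b_1(s)\le D_1:=\inf_{0\le\sigma\le\rho}b(\sigma)$ on $[0,\rho)$. Strict decrease forces $\tilde b_1(s)>\tilde b_1(\rho)=b(\rho)$ for $s<\rho$, so you need $b(\rho)<D_1$; but $D_1\le b(\rho)$ by definition of the infimum, a contradiction. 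Similarly, your $\tilde b_2$ may fail to dominate $b$: an affine interpolation from a value $\ge D_2$ at $0$ down to $b(\rho)$ at $\rho$ can dip below $b$ on $(0,\rho)$ (e.g.\ if $b\equiv D_2$ on $[0,\rho/2]$).

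The paper fixes both issues with two small moves. First, it enlarges $\rho$: since $b$ is strictly decreasing to $0$ on $[\rho_b,\infty)$, one can choose $\rho\ge\rho_b$ with $b(\rho)\le D_1$ (where $D_1:=\inf_{[0,\rho_b]}b$), so that $b(\rho)=\min_{[0,\rho]}b$. Second, it drops the continuity requirement at the junction (which $\Dt$ does not demand): take $b_1$ on $[0,\rho]$ any strictly decreasing function with $b_1(0)\le D_1$, and take $b_2$ on $[0,\rho]$ any strictly decreasing bounded function with $b_2(\rho)\ge D_2$. Then $b_1\le b\le b_2$ on $[0,\rho']$ is immediate, and on $[\rho',\rho]$ one uses that $b$ itself is decreasing there. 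Your flagged ``mildly delicate point'' is exactly where the argument needs this adjustment.
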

\begin{proof}
Let $c$ be constructed by a $b\in\Dt$. 
By Definitions~\ref{def:right-sideclasses}--\ref{def:super-subexp} there exists $\rho'>0$ such that $b$ is decreasing on $(\rho',\infty)$ to $0$ and, for some $D_1,D_2>0$, $D_1\leq b(s)\leq D_2$ for $s\in[0,\rho']$.
Choose $\rho\geq\rho'$ such that $b(\rho)\leq D_1$.
Set $b_1(s)=b_2(s)=b(s)$ for $s>\rho$ and define $b_1$ on $[0,\rho]$ as an arbitrary decreasing function with $b_1(0)\leq D_1$.
Similarly, we define $b_2$ on $[0,\rho]$ as an arbitrary decreasing bounded function with $b_2(\rho)\geq D_2$.
As a result, $b_1(s)\leq b(s)\leq b_2(s)$, $s\in\R_+$.
Let $c_1,c_2$ be constructed by $b_1,b_2\in\Dt$, which are strictly decreasing on $\R_+$ such that either $c_1,c_2,c$ are all given by \eqref{eq:radialc} or $c_1,c_2,c$ are all given by \eqref{eq:integralc}. Then in both cases, evidently, $c_1(x)\leq c(x)\leq c_2(x)$, $x\in\X$. The rest of the proof follows from Proposition~\ref{prop:dssdsdadassaqw}.
\end{proof}

Let $b,b^{\alpha}\in\Dt$ for some $\alpha\in(0,1)$. We denote by $c_\alpha$ the function constructed by $b^\alpha$, as in \eqref{eq:defofcalpha}.

\begin{remark}\label{rem:saddssdadsasadsda1}
It is easy to see that, if for some $\alpha_0\in(0,1)$, $b^{\alpha_0}\in\Dt$ is strictly decreasing on $\R_+$, then $b^\alpha\in\Dt$ for all $\alpha\in[\alpha_0,1]$.
\end{remark}

\begin{proposition}\label{prop:sassaf}
For any $\alpha_0\in\bigl(\frac{3}{4},1\bigr)$ there exists $\eps_0=\eps_0(\alpha_0)\in(0,1)$ such that, for any $\eps\in(0,\eps_0)$, there exists $\alpha=\alpha(\eps)\in(\alpha_0,1)$ such that the following holds.
For any $b\in\Dt$ strictly decreasing on $\R_+$ such that $b^{\alpha_0}\in\Dt$, let $c$ and $c_\alpha$ be constructed by $b$ and $b^\alpha$, respectively. Then there exists 
 $\tau=\tau(\eps,b)>0$ such that, for any $t\geq\tau$,
\begin{align}
\label{eq:supset-}
\La(t{-}\eps t,c_\alpha)\subset \La\Bigl(t{-}\frac{\eps t}{2},c\Bigr),\\
\label{eq:supset+}
\La\Bigl(t{+}\frac{\eps t}{2},c_\alpha\Bigr)\subset \La(t{+}\eps t,c).
\end{align}
\end{proposition}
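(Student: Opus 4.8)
\textbf{Proof proposal for Proposition~\ref{prop:sassaf}.}

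The plan is to translate the set inclusions into pointwise inequalities between the defining functions $c$ and $c_\alpha$, and then to estimate the ratio $\frac{\log c_\alpha}{\log c}$ near the relevant level sets. Fix $\alpha\in(\alpha_0,1)$ to be chosen later (close to $1$), and recall that by Remark~\ref{rem:saddssdadsasadsda1} we then have $b^\alpha\in\Dt$. In the radial case \eqref{eq:radialc} we have $c(x)=b(|x|)$ and $c_\alpha(x)=b(|x|)^\alpha$, so $\log c_\alpha=\alpha\log c$ exactly; since $b$ is tail-decreasing with $b(\rho)\le1$ we have $\log c(x)\le0$ for $|x|$ large, hence $\log c_\alpha(x)=\alpha\log c(x)\ge\log c(x)$ when $\log c(x)\le 0$. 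A point $x\in\La(t-\eps t,c_\alpha)$ satisfies $c_\alpha(x)\ge e^{-\beta(t-\eps t)}$, i.e. $\alpha\log c(x)\ge-\beta(1-\eps)t$, i.e. $\log c(x)\ge-\frac{\beta(1-\eps)}{\alpha}t$; to land inside $\La(t-\frac{\eps t}{2},c)$ we need $\log c(x)\ge-\beta(1-\frac{\eps}{2})t$, and this follows provided $\frac{1-\eps}{\alpha}\le 1-\frac{\eps}{2}$, which holds for $\alpha$ close enough to $1$ (given $\eps$); similarly for \eqref{eq:supset+} one needs $\frac{1}{\alpha}(1+\frac{\eps}{2})\le 1+\eps$, again true for $\alpha$ near $1$. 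Choosing $\eps_0$ small enough and then $\alpha=\alpha(\eps)$ accordingly handles the radial case; the requirement $\alpha_0>\frac34$ is there to give room for these elementary algebraic constraints.

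The genuinely nontrivial case is the `monotone' one \eqref{eq:integralc}, where $c(x)=\int_{\Delta(x)}b(|y|)\,dy$ and $c_\alpha(x)=\int_{\Delta(x)}b(|y|)^\alpha\,dy$, so $\log c_\alpha$ is \emph{not} simply $\alpha\log c$. Here I would first reduce, via Remark~\ref{re:coolssls}, to studying $x$ with $\m{x}$ large (say $x$ in the first `octant', $x_j\ge0$), and then obtain sharp two-sided asymptotics of $\int_{\Delta(x)}b(|y|)^\gamma\,dy$ as $\m{x}\to\infty$, for $\gamma\in\{\alpha,1\}$, in terms of $b(\m{x})^\gamma$ and $\m{x}$. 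The key point is that, because $b$ is long-tailed and (after Definition~\ref{def:super-subexp}) either polynomially decaying \eqref{eq:polynomialb} or faster than any polynomial \eqref{eq:quicklydecreasing}, the integral $\int_{\Delta(x)}b(|y|)^\gamma\,dy$ is, up to subexponential (indeed polynomial or logarithmic) factors in $\m{x}$, comparable to $b(\m{x})^\gamma$ times a polynomial in $\m{x}$; the precise statement is that $\log\!\int_{\Delta(x)}b(|y|)^\gamma\,dy \sim \gamma\log b(\m{x}) \sim \gamma\log b(\langle x\rangle)$ as $\m{x}\to\infty$ — both quantities being driven by the dominant exponential-type decay of $b$ — and the correction terms (powers of $\m{x}$ and $\log\m{x}$) are $o(\m{x})$ hence negligible against $\log b(\m{x})$ on the scale $e^{-\beta t}$ where $t\to\infty$ forces $\m{x}\to\infty$. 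This gives $\log c_\alpha(x)=(\alpha+o(1))\log c(x)$ as $\m{x}\to\infty$, and then the same elementary algebra as in the radial case closes the argument: for the inclusion $\La(t-\eps t,c_\alpha)\subset\La(t-\frac{\eps t}{2},c)$ one uses that on $\X\setminus\La(t-\eps t,c_\alpha)$ the complement relation gives $\log c(x)\le-\frac{\beta(1-\eps)}{\alpha+o(1)}t$, and one picks $\alpha$ close enough to $1$ (after fixing $\eps<\eps_0$) and $\tau$ large enough to absorb the $o(1)$.

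The main obstacle is precisely the asymptotic analysis of $\int_{\Delta(x)}b(|y|)^\gamma\,dy$ and the verification that $\log$ of it is asymptotically $\gamma\log b(\langle x\rangle)$ uniformly enough in the direction of $x$; the subtlety is that $\Delta(x)$ is an unbounded corner region, the integrand depends on $|y|$ (not $\m{y}$), and one must control both the region near the vertex $x$ (where the integrand is largest) and the tails along the coordinate directions. I expect this is exactly what is packaged in the auxiliary results on functions constructed by tail-decreasing functions (and the Appendix lemmas on $\Et$), so I would invoke those — in the same spirit as Proposition~\ref{prop:wehavethis} and Proposition~\ref{prop:liminfbelow} are used elsewhere — rather than redo the estimates: concretely, one shows that $b^\alpha\in\Et$ (true for $\alpha$ near $1$ by the long-tailed/log-convex/$h_b$ conditions, which are stable under raising to a power close to $1$), so that both $c$ and $c_\alpha$ are constructed by functions in $\Et$, and then the comparison of their level sets reduces to comparing $\log b$ with $\alpha\log b$, where the long-tailedness guarantees that the additive polynomial corrections from the integration are asymptotically irrelevant. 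Finally, I would assemble: choose $\eps_0=\eps_0(\alpha_0)$ from the algebraic constraints, for $\eps\in(0,\eps_0)$ set $\alpha=\alpha(\eps)\in(\alpha_0,1)$ satisfying both $\frac{1-\eps}{\alpha}<1-\frac{\eps}{2}$ and $\frac{1+\eps/2}{\alpha}<1+\eps$, and choose $\tau=\tau(\eps,b)$ so that the $o(1)$ error in $\log c_\alpha=(\alpha+o(1))\log c$ is dominated for $\m{x}$ (equivalently $|x|$) beyond the threshold forced by $t\ge\tau$; this yields \eqref{eq:supset-}–\eqref{eq:supset+}.
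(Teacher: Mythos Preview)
Your treatment of the radial case \eqref{eq:radialc} is correct and coincides with the paper's: there $\log c_\alpha=\alpha\log c$ exactly, and the inclusions reduce to the algebraic constraint $\alpha\ge\frac{1+\eps/2}{1+\eps}$ (resp.\ $\alpha\ge\frac{1-\eps}{1-\eps/2}$), which the paper even takes as an \emph{equality} to get equality of the level sets.

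In the monotone case \eqref{eq:integralc}, however, your argument has a genuine gap. Your key claim is that
\[
\log c_\alpha(x)=(\alpha+o(1))\log c(x),\qquad \m{x}\to\infty,
\]
coming from $\log\int_{\Delta(x)}b(|y|)^\gamma dy\sim\gamma\log b(\m{x})$. This fails precisely in the polynomial regime \eqref{eq:polynomialb} that Definition~\ref{def:super-subexp} explicitly allows. If $b(s)\asymp s^{-(d+\mu)}$, then a scaling computation gives $\int_{\Delta(x)}b(|y|)^\gamma dy\asymp \m{x}^{-(\gamma(d+\mu)-d)}$ (along, say, the diagonal), so
\[
\frac{\log c_\alpha(x)}{\log c(x)}\longrightarrow \frac{\alpha(d+\mu)-d}{\mu}=\alpha+\frac{d(\alpha-1)}{\mu}\neq\alpha.
\]
The ``polynomial corrections from the integration'' are \emph{not} lower order here; they are of the same logarithmic order as $\log b(\m{x})$ itself, so the $o(1)$ you want to absorb is in fact a nonzero constant. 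One could try to repair this by replacing $\alpha$ with the actual limiting ratio $\kappa(\alpha)$ and checking $\kappa(\alpha)\to1$, but you would still owe uniformity of the convergence over all directions of $x$ (not just the diagonal), which you yourself flag as the main obstacle and do not address. Also note that invoking Proposition~\ref{prop:wehavethis} here would be circular: its proof \emph{uses} the present proposition.

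The paper avoids all of this with a single application of H\"older's inequality. Writing $b^\alpha=b^{f(\alpha)}\cdot b^{\alpha-f(\alpha)}$ with $f(\alpha)=\alpha-\sqrt{\alpha(1-\alpha)}$, $p=1/f(\alpha)$, $q=1/(1-f(\alpha))$, one gets
\[
c_\alpha(x)\le\Bigl(\int_{\Delta(x)}b(|y|)\,dy\Bigr)^{f(\alpha)}\Bigl(\int_{\Delta(x)}b(|y|)^{g(\alpha)}\,dy\Bigr)^{1/q},\qquad g(\alpha)=\frac{\alpha-f(\alpha)}{1-f(\alpha)}.
\]
The first factor is $c(x)^{f(\alpha)}$; the second is $\le1$ for $\m{x}$ large because $g(\alpha)>\alpha_0$ ensures $b^{g(\alpha)}\in\Dt$ (this is where $\alpha_0>\tfrac34$ is actually used). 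Choosing $\alpha$ so that $f(\alpha)=\frac{1+\eps/2}{1+\eps}$ gives \eqref{eq:supset+} directly, with no asymptotic analysis and automatic uniformity in $x$. You should replace your asymptotic heuristic by this H\"older step.
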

\begin{proof}
	We will prove \eqref{eq:supset+}. The proof of \eqref{eq:supset-} is fully analogous. Consider two cases \eqref{eq:radialc} and \eqref{eq:integralc} separately.

	1) Let $c$ be given by \eqref{eq:radialc}. Since $\alpha_0\in\bigl(\frac34,1\bigr)$, one can define $\eps_0:=\frac{1-\alpha_0}{\alpha_0-\frac{1}{2}}\in(0,1)$.
Take an arbitrary $\eps\in(0,\eps_0)$, then one easily has that
\begin{equation}\label{eq:alpha0alpha+}
  \alpha:=\dfrac{1+\frac{\eps}{2}}{1+\eps}\in(\alpha_0,1).
\end{equation}
By \eqref{eq:explicitLa}, for some $\tau>0$, the equality in \eqref{eq:supset+} is just equivalent to
\[
	\eta(t{+}\frac{\eps t}{2}, b^\alpha)=\eta(t{+}\eps t, b), \quad t\geq \tau.
\]
To prove the latter equality, apply $\log b^\alpha=\alpha\log b$ to both its parts:
  \[
    -\Bigl(1+\frac{\eps}{2}\Bigr)\beta t=-\alpha (1+\eps)\beta t,
  \]
  that is equivalent to \eqref{eq:alpha0alpha+}.

	2) Let $c$ be given by \eqref{eq:integralc}. Prove the following inequality, which is equivalent to \eqref{eq:supset+},
\begin{equation}\label{eq:supset}
	\X\setminus\La(t{+}\eps t,c)\subset\X\setminus \La\Bigl(t{+}\frac{\eps t}{2},c_\alpha\Bigr),\quad t\geq \tau.
\end{equation}
Recall that the inclusion $x\in\X\setminus\La(t{+}\eps t,c)$ is equivalent to
  \begin{equation}\label{eq:whatdowehave}
c(x)=\int_{\Delta(x)} b(|y|)dy < e^{-\beta(1+\eps)t}.
  \end{equation}
  We will use H\"older's inequality to estimate $c_\alpha(x)$.
  It is easy to see that the function
  \[
  f(\alpha):=\alpha-\sqrt{\alpha(1-\alpha)}:\bigl(\tfrac{1}{2},1\bigr)\to(0,1)
  \]
  is increasing. We set $p:=p(\alpha):=\frac{1}{f(\alpha)}>1$ and $q:=q(\alpha):=\frac{1}{1-f(\alpha)}>1$. Then $\frac{1}{p}+\frac{1}{q}=1$ and, by \eqref{eq:whatdowehave}, we have
  \begin{align}
  c_\alpha(x)&=\int_{\Delta(x)} b(|y|)^{f(\alpha)+(\alpha-f(\alpha))} dy\notag\\&\leq
  \biggl(\int_{\Delta(x)} b(|y|)^{f(\alpha)p}dy\biggl)^{\frac{1}{p}}
  \biggl(\int_{\Delta(x)} b(|y|)^{(\alpha-f(\alpha))q}dy\biggl)^{\frac{1}{q}}\notag\\
  & < e^{-\beta(1+\eps)f(\alpha)t}
  \biggl(\int_{\Delta(x)} b(|y|)^{(\alpha-f(\alpha))q}dy\biggl)^{\frac{1}{q}}\label{eq:tempweneed}
  \end{align}
	To get the finiteness of the latter integral in \eqref{eq:tempweneed}, it is enough to have there $\alpha$ such that $\alpha_0<g(\alpha)<1$, where
   \[
   g(\alpha):=(\alpha-f(\alpha))q(\alpha)=\frac{\sqrt{\alpha}}{\sqrt{\alpha}
   +\sqrt{1-\alpha}}, \quad \alpha\in\bigl(\tfrac{1}{2},1\bigr).
   \]
  It is easy to see that $g:\bigl(\frac{1}{2},1\bigr)\to\bigl(\frac{1}{2},1\bigr)$ is increasing and $g(\alpha)<\alpha$, $\alpha\in \bigl(\frac{1}{2},1\bigr)$. Note also that $g\bigl(\frac{9}{10}\bigr)=\frac{3}{4}$. As a result, for the given $\alpha_0\in\bigl(\frac{3}{4},1\bigr)$, there exists a unique $\alpha_1\in\bigl(\frac{9}{10},1\bigr)$ such that $\alpha_0=g(\alpha_1)<\alpha_1$. Hence, for any $\alpha\in(\alpha_1,1)\subset(\alpha_0,1)$, one gets $g(\alpha)>g(\alpha_1)=\alpha_0$, and then $\int_\X b(|y|)^{g(\alpha)} dy<\infty$; in particular, the latter integral in \eqref{eq:tempweneed} is finite.

  Next, the function $h(\eps)=\dfrac{1+\frac{\eps}{2}}{1+\eps}:(0,1)\to\bigl(\frac{3}{4},1\bigr)$ is decreasing; cf.~\eqref{eq:alpha0alpha+}. Therefore, there exists a unique $\eps_0\in(0,1)$ such that $h(\eps_0)=\alpha_1$; then we have $h:(0,\eps_0)\to(\alpha_1,1)$. Take and fix now an arbitrary $\eps\in(0,\eps_0)$. Since,
  \[
  f:(\alpha_1,1)\to\bigl(f(\alpha_1),1\bigr)\subset(\alpha_1,1)=(h(\eps_0),1)
  \]
  is increasing (we used here that $f(\alpha)<\alpha$), there exists a unique $\alpha=\alpha(\eps)\in (\alpha_1,1)$ such that
  \begin{equation}\label{eq:defofalpha}
    f(\alpha)=h(\eps)=\dfrac{1+\frac{\eps}{2}}{1+\eps}.
  \end{equation}

Therefore, after $\eps_0,\eps,\alpha$ are chosen, we take an arbitrary $b\in\Dt$ such that $b^{\alpha_0}\in\Dt$, and let $c$ be constructed by $b$. 
  For this $\alpha$, by the above, $\int_\X b(|y|)^{g(\alpha)} dy<\infty$; therefore, there exists $r>0$ such that, for all $x\in\X$ with $\m{x}>r$,
  \[
  \int_{\Delta(x)} b(|y|)^{g(\alpha)}dy\leq 1.
  \]
  The latter inequality together with \eqref{eq:defofalpha} and \eqref{eq:tempweneed} implies that
\begin{equation}\label{eq:whathaveweobtained}
  c_\alpha(x)\leq e^{-\beta(1+\frac{\eps}{2})t},
\end{equation}
provided that $x\in \X\setminus\La(t{+}\eps t,c)$ (i.e. \eqref{eq:whatdowehave} holds) and $\m{x}>r$. In~\eqref{eq:whatdowehave}, $\m{x}\to\infty$ if and only if $t\to\infty$; cf.~Remark~\ref{re:coolssls}. Therefore, there exists $\tau=\tau(r)=\tau(\eps,b)>0$ such that $t\geq \tau$ in \eqref{eq:whatdowehave} implies $\m{x}\geq r$. As a result, for any $t\geq \tau$ and any $x\in \X\setminus\La(t{+}\eps t,c)$, one gets \eqref{eq:whathaveweobtained}, that means that $x\in \X\setminus\La\bigl(t{+}\frac{\eps t}{2},c_\alpha\bigr)$; i.e. \eqref{eq:supset} holds.
\end{proof}

\begin{proposition}\label{prop:wehavethis}
	Let $b_1,b_2\in\Dt$ be log-equivalent functions such that, for some $\alpha_0\in\bigl(\frac{3}{4},1\bigr)$, $b_i^{\alpha_0}\in\Dt$, $i=1,2$. Let $c^{(i)}$ be constructed by $b_i$, $i=1,2$ (both satisfy simultaneously either \eqref{eq:radialc} or \eqref{eq:integralc}). Then there exists $\eps_0=\eps_0(\alpha_0)\in(0,1)$ such that, for any $\eps\in(0,\eps_0)$, there exists  $\tau=\tau(\eps)>0$ such that, for any $t\geq\tau$,
\begin{align}
\label{eq:supsetlog-}
\La(t{-}\eps t,c^{(1)})\subset \La\Bigl(t{-}\frac{\eps t}{2},c^{(2)}\Bigr),\\
\label{eq:supsetlog+}
\La\Bigl(t{+}\frac{\eps t}{2},c^{(1)}\Bigr)\subset \La(t{+}\eps t,c^{(2)}).
\end{align}
\end{proposition}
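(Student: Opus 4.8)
The plan is to deduce the statement from Proposition~\ref{prop:sassaf} — which, via its Hölder-inequality argument, already carries out the only genuinely case-dependent work, namely the distinction between \eqref{eq:radialc} and \eqref{eq:integralc} — together with the elementary comparison in Proposition~\ref{prop:dssdsdadassaqw}; the log-equivalence \eqref{eq:log-equiv} will enter only through a single tail inequality. I~would establish \eqref{eq:supsetlog-} and \eqref{eq:supsetlog+} together.

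First I would invoke Proposition~\ref{prop:new} to reduce to the case in which $b_1$ and $b_2$ are strictly decreasing on all of $\R_+$: this modifies each $b_i$ only on a bounded set, hence affects neither \eqref{eq:log-equiv} nor the membership $b_i^{\alpha_0}\in\Dt$ (both tail properties), nor the sets $\La(t,c^{(i)})$ for $t$ large, and it is precisely these large-$t$ inclusions that must be proved. Next I would apply Proposition~\ref{prop:sassaf} to the given $\alpha_0\in(\tfrac34,1)$ to fix $\eps_0=\eps_0(\alpha_0)\in(0,1)$; then, for an arbitrary $\eps\in(0,\eps_0)$, let $\alpha=\alpha(\eps)\in(\alpha_0,1)$ be the exponent it produces. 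Since $b_2$ is now strictly decreasing on $\R_+$ and $b_2^{\alpha_0}\in\Dt$, Remark~\ref{rem:saddssdadsasadsda1} gives $b_2^\alpha\in\Dt$; I would then let $c_\alpha^{(2)}$ be the function constructed by $b_2^\alpha$, of the same type (\eqref{eq:radialc} or \eqref{eq:integralc}) as $c^{(2)}$.

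The only use of log-equivalence is the following observation: because $\log b_1(s)/\log b_2(s)\to1$ while $\log b_2(s)\to-\infty$, for $\alpha<1$ one has $\log b_1(s)\le\alpha\log b_2(s)$, i.e. $b_1(s)\le b_2(s)^{\alpha}$, for all $s$ large enough (informally, the exponent $\alpha<1$ slows the decay of $b_2$ just enough to dominate $b_1$ in the tail). Hence, by Proposition~\ref{prop:dssdsdadassaqw} applied with $c^{(1)}$ and $c_\alpha^{(2)}$ in the roles of its $c^{(1)}$ and $c^{(2)}$, there is $\tau_1>0$ with $\La(t,c^{(1)})\subset\La(t,c_\alpha^{(2)})$ for $t\ge\tau_1$. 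On the other hand, Proposition~\ref{prop:sassaf} applied to $b_2$ (strictly decreasing, $b_2^{\alpha_0}\in\Dt$) gives $\tau_2>0$ such that, for $t\ge\tau_2$, both $\La(t-\eps t,c_\alpha^{(2)})\subset\La(t-\tfrac{\eps t}{2},c^{(2)})$ and $\La(t+\tfrac{\eps t}{2},c_\alpha^{(2)})\subset\La(t+\eps t,c^{(2)})$ hold, i.e. \eqref{eq:supset-} and \eqref{eq:supset+} with $c^{(2)},c_\alpha^{(2)}$ in place of $c,c_\alpha$.

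It then remains to chain the inclusions. I would take $\tau$ larger than $\tau_1$, $\tau_1/(1-\eps)$, $\tau_2$, and the threshold after which the Proposition~\ref{prop:new} reduction is in force. For $t\ge\tau$, using the inclusion $\La(s,c^{(1)})\subset\La(s,c_\alpha^{(2)})$ at $s=t-\eps t$ and then \eqref{eq:supset-} for $b_2$ gives $\La(t-\eps t,c^{(1)})\subset\La(t-\eps t,c_\alpha^{(2)})\subset\La(t-\tfrac{\eps t}{2},c^{(2)})$, which is \eqref{eq:supsetlog-}; using it at $s=t+\tfrac{\eps t}{2}$ and then \eqref{eq:supset+} gives $\La(t+\tfrac{\eps t}{2},c^{(1)})\subset\La(t+\tfrac{\eps t}{2},c_\alpha^{(2)})\subset\La(t+\eps t,c^{(2)})$, which is \eqref{eq:supsetlog+}. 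The substantive difficulty — the separation of the two cases \eqref{eq:radialc}/\eqref{eq:integralc} — is already absorbed into Proposition~\ref{prop:sassaf}; the only remaining obstacle is bookkeeping: choosing correctly which of $b_1,b_2$ to raise to the power $\alpha$ so that \eqref{eq:log-equiv} yields the tail inequality in the right direction, and verifying $b_2^\alpha\in\Dt$ so that the auxiliary set $\La(\cdot,c_\alpha^{(2)})$ is legitimate.
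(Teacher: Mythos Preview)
Your proof is correct and follows the paper's strategy: reduce via Proposition~\ref{prop:new} to strictly decreasing $b_i$, use log-equivalence to obtain the tail inequality $b_1(s)\le b_2(s)^{\alpha}$ for large $s$, and then combine Proposition~\ref{prop:dssdsdadassaqw} with Proposition~\ref{prop:sassaf}. For \eqref{eq:supsetlog+} your argument coincides with the paper's. For \eqref{eq:supsetlog-} there is a small but pleasant difference: the paper introduces an auxiliary function $b_3:=b_1^{1/\alpha}$, observes $b_3<b_2$ in the tail, and applies \eqref{eq:supset-} with $c=c^{(3)}$ (so that $c_\alpha^{(3)}=c^{(1)}$), whereas you route both inclusions through the single intermediate $c_\alpha^{(2)}$ and apply Proposition~\ref{prop:sassaf} only to $b_2$. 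Your version is more uniform and avoids having to verify the hypothesis $b_3^{\alpha_0}\in\Dt$ needed to invoke Proposition~\ref{prop:sassaf} for $b_3$, a point the paper does not address explicitly.
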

\begin{proof}
We assume first that both $b_1$ and $b_2$ are strictly decreasing on $\R_+$.
Let $\eps_0$ be given by Proposition~\ref{prop:sassaf}. Take an arbitrary $\eps\in(0,\eps_0)$ and consider $\alpha=\alpha(\eps)\in(\alpha_0,1)$ also given by Proposition~\ref{prop:sassaf}. 
Let $\rho_0>0$ be such that $b_i(\rho_0)\leq 1$, $i=1,2$. Set $\delta:=1-\alpha\in(0,1-\alpha_0)$. By~\eqref{eq:log-equiv}, there exists $\rho_\alpha\geq\rho_0$ such that
\[
1-\delta<\frac{-\log b_1(s)}{-\log b_2(s)}<1+\delta, \quad s>\rho_\alpha,
\]
in particular,
\begin{equation}\label{eq:ewrerwewrrewewrewr}
b_1(s)< b_2(s)^{\alpha}, \quad s>\rho_\alpha.
\end{equation} 
By~Remark~\ref{rem:saddssdadsasadsda1}, $b_2^\alpha\in\Dt$, and hence, by \eqref{eq:ewrerwewrrewewrewr} and Proposition~\ref{prop:dssdsdadassaqw}, applying to $b_1$ and $b_2^\alpha$, one gets 
\[
	\La\Bigl(t{+}\frac{\eps t}{2},c^{(1)}\Bigr)\subset \La\Bigl(t{+}\frac{\eps t}{2},c^{(2)}_\alpha\Bigr).
\]
The latter inequality together with \eqref{eq:supset+} for $c=c^{(2)}$ imply \eqref{eq:supsetlog+}.

Next, by \eqref{eq:ewrerwewrrewewrewr},
 $b_3(s):=b_1(s)^\frac{1}{\alpha}< b_2(s)$, if only $s>\rho_\alpha$. From here we have that $b_3\in\Dt$ and, moreover, 
by Proposition~\ref{prop:dssdsdadassaqw}, applying to $b_3$ and $b_2$,
\[
	\La\Bigl(t{-}\frac{\eps t}{2},c^{(3)}\Bigr)\subset \La\Bigl(t{-}\frac{\eps t}{2},c^{(2)}\Bigr),
\]
where $c^{(3)}$ in constructed by $b_3$.  The latter inequality together with \eqref{eq:supset-} for $c=c^{(3)}$ imply \eqref{eq:supsetlog-}.

Let now $b_i\in\Dt$, $i=1,2$ be arbitrary. Then, by the proof of~Proposition~\ref{prop:new}, there exist $\tilde{c}^{(i)}$ constructed by $\tilde{b}_i\in\Dt$, strictly decreasing on $\R_+$ such that $b_i(s)=\tilde{b}_i(s)$ for big enough $s$. Then $\tilde{b}_1$ and $\tilde{b}_2$ are log-equivalent. Applying the previous considerations to $\tilde{b}_i$, $i=1,2$, we get \eqref{eq:supsetlog-} and \eqref{eq:supsetlog+}, with $c^{(i)}$ replaced by $\tilde{c}^{(i)}$, $i=1,2$, for big enough $t$. Then, by Proposition~\ref{prop:new}, one gets the statement.
\end{proof}

\subsection{Functions constructed by long-tailed functions}
Recall that long-tailed functions were defined in Definition~\ref{def:right-sideclasses}. 

\begin{proposition}[\!\!{\cite[Lemma 4.1]{FT2017b}}]\label{prop:rightsidelongtailedmeansspatially}
	Let $c$ given by \eqref{eq:radialc} be constructed by a long-tailed function $b\in\Dt$.  Then, for any $r>0$,
  \begin{equation}\label{eq:long-tailed}
        \lim_{|x|\to\infty} \sup_{|y|\leq r} \biggl\lvert \frac{c(x+y)}{c(x)} - 1 \biggr\rvert = 0.
  \end{equation}
\end{proposition}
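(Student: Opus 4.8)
The plan is to squeeze the $d$-dimensional ratio $c(x+y)/c(x)$ between two one-dimensional ratios and then appeal to the defining long-tailed property \eqref{eq:longtaileddef}. Fix $r>0$. Since $b\in\Dt$, there is a $\rho\geq0$ such that $b$ is strictly decreasing and strictly positive on $[\rho,\infty)$; in particular $c(x)=b(|x|)>0$ whenever $|x|>\rho$. First I would restrict to $|x|>\rho+r$, so that for every $y\in\X$ with $|y|\leq r$ the three numbers $|x|-r$, $|x+y|$, $|x|+r$ all lie in $[\rho,\infty)$ and satisfy $|x|-r\leq|x+y|\leq|x|+r$ by the triangle inequality. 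Monotonicity of $b$ on $[\rho,\infty)$ then gives $b(|x|+r)\leq b(|x+y|)=c(x+y)\leq b(|x|-r)$, and dividing by $c(x)=b(|x|)$ yields, uniformly over $|y|\leq r$,
\[
\frac{b(|x|+r)}{b(|x|)}\leq\frac{c(x+y)}{c(x)}\leq\frac{b(|x|-r)}{b(|x|)}.
\]

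Next I would note that both bounds are independent of $y$ and tend to $1$ as $|x|\to\infty$. The lower bound is precisely \eqref{eq:longtaileddef} with $\tau=r$. For the upper bound, the substitution $s=|x|-r$ rewrites $b(|x|-r)/b(|x|)$ as $b(s)/b(s+r)$, the reciprocal of the ratio in \eqref{eq:longtaileddef} with $\tau=r$; since that ratio converges to $1$ and $b>0$ on the tail, so does its reciprocal. Combining the two bounds,
\[
\sup_{|y|\leq r}\left\lvert\frac{c(x+y)}{c(x)}-1\right\rvert\leq\max\left\{\left\lvert\frac{b(|x|+r)}{b(|x|)}-1\right\rvert,\left\lvert\frac{b(|x|-r)}{b(|x|)}-1\right\rvert\right\},
\]
and the right-hand side tends to $0$ as $|x|\to\infty$, which is exactly \eqref{eq:long-tailed}.

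I do not anticipate any genuine difficulty here; the only point needing care is to take $|x|$ large enough that $|x|-r$ already sits in the region where $b$ is monotone and positive, which is guaranteed by $b\in\Dt$, after which the statement follows from a plain squeeze governed entirely by the one-dimensional long-tailed condition.
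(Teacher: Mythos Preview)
Your argument is correct. The paper does not actually prove this proposition; it merely cites \cite[Lemma~4.1]{FT2017b}. Your squeeze via the tail-monotonicity of $b\in\Dt$ is a clean self-contained route: by sandwiching $c(x+y)/c(x)$ between $b(|x|+r)/b(|x|)$ and $b(|x|-r)/b(|x|)$ for all $|y|\leq r$ at once, you reduce directly to two instances of the one-dimensional long-tailed condition~\eqref{eq:longtaileddef}. The approach the paper alludes to elsewhere (see the proof of the next proposition and the reference to \cite[formula~(2.18)]{FKZ2013}) instead invokes the general fact that for long-tailed functions the convergence in~\eqref{eq:longtaileddef} is automatically locally uniform in~$\tau$, i.e.\ $\sup_{|\tau|\leq r}\lvert b(s+\tau)/b(s)-1\rvert\to0$; that route does not use monotonicity of~$b$ but requires an external lemma. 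Your version trades that citation for the tail-decreasing hypothesis already built into $\Dt$, which is arguably more transparent here.
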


\begin{proposition}
	Let $c$ given by \eqref{eq:integralc} be constructed by a long-tailed function $b\in\Dt$. Then 
\begin{equation}\label{eq:monotonelong-tailed}
    \lim_{\m{x}\to\infty}\frac{c(x+h)}{c(x)}=1, \quad h\in\R_+^d.
  \end{equation}
\end{proposition}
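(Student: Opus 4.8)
The plan is to reduce the multidimensional limit along a fixed $h\in\R_+^d$ to the one-dimensional long-tailedness of $b$, by peeling off one coordinate slab at a time. First I would observe that it suffices to prove the claim for $h=\tau e_j$, a shift in a single coordinate direction by $\tau\geq0$, since a general $h=(h_1,\dots,h_d)\in\R_+^d$ can be written as a finite composition of such shifts and the ratios multiply: writing $c(x+h)/c(x)$ as a telescoping product over the coordinates, if each factor tends to $1$ as $\m{x}\to\infty$ (noting that $\m{x'}\to\infty$ for every intermediate point $x'$ obtained by adding nonnegative coordinates to $x$), then so does the product. So fix $j$ and $\tau\geq0$ and study $c(x+\tau e_j)/c(x)$.

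Next I would write out the difference of the two integrals over the shifted quadrants. Since $\Delta(x+\tau e_j)\subset\Delta(x)$, we have
\begin{equation*}
c(x)-c(x+\tau e_j)=\int_{\Delta(x)\setminus\Delta(x+\tau e_j)} b(|y|)\,dy,
\end{equation*}
and the region of integration is the slab $\{y:\ y_i\geq x_i\ (i\neq j),\ x_j\leq y_j< x_j+\tau\}$. The goal is to show this slab integral is $o\bigl(c(x)\bigr)$ as $\m{x}\to\infty$. I would split into two cases according to which coordinate of $x$ realizes $\m{x}$ (up to a bounded amount). If $\m{x}$ is attained (nearly) by $x_j$ itself, then on the slab $|y|\geq y_j\geq x_j$ is large, so $b(|y|)\leq b(x_j)$ and the slab integral is bounded by $\tau$ times the integral of $b(|y|)$ over the $(d-1)$-dimensional tail quadrant $\{y_i\geq x_i,\ i\neq j\}$ at height $y_j\approx x_j$; using $b\in\Dt$ (so $b$ is tail-decreasing with $\int b(s)s^{d-1}\,ds<\infty$) together with long-tailedness one bounds this by a quantity that is small relative to $c(x)=\int_{\Delta(x)}b$, because the same quadrant integral with $y_j$ running over all of $[x_j,\infty)$ is comparable to $c(x)$ while the slab only uses a window of width $\tau$, and long-tailedness forces $b(x_j+\tau)/b(x_j)\to1$ so that the extra mass near $y_j=x_j$ is not concentrated. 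If instead $\m{x}$ is attained by some $x_i$ with $i\neq j$, then $x_j$ may be small or even very negative, but then both $c(x)$ and $c(x+\tau e_j)$ are, up to the comparable factor coming from integrating the $j$-th coordinate over $[x_j,\infty)$ resp.\ $[x_j+\tau,\infty)$, governed by the same large $x_i$; here I would factor the integral using Fubini, integrating first in $y_j$, and use that $\int_{x_j}^\infty(\cdots)dy_j$ and $\int_{x_j+\tau}^\infty(\cdots)dy_j$ differ by a term controlled by long-tailedness of the inner function of $y_j$.

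The cleanest way to organize the above is probably to use the already-established Proposition on level sets and the tail-decreasing structure rather than a bare-hands Fubini splitting; in particular, since $c$ constructed by $b\in\Dt$ satisfies $\lim_{\m{x}\to\infty}c(x)=0$ (Remark~\ref{re:coolssls}) and $c$ is coordinatewise nonincreasing, monotonicity gives $c(x+\tau e_j)\leq c(x)$, so we need only the reverse-type estimate $c(x+\tau e_j)\geq(1-o(1))c(x)$. I expect the main obstacle to be the case where $\m{x}$ is realized by a coordinate other than $x_j$ and the remaining coordinates are wildly different in size, so that one cannot simply invoke the one-dimensional long-tail property of $b$ on the radial variable $|y|$; the resolution is that on the quadrant $\Delta(x)$ one always has $|y|\geq\m{x}$, so after the substitution the relevant one-dimensional function $s\mapsto\int_{\text{slice at radius }\geq s} b$ inherits long-tailedness from $b$ (this is essentially the content of the companion Proposition~\ref{prop:rightsidelongtailedmeansspatially} and \cite[Lemma 4.1]{FT2017b}), and a uniform version of \eqref{eq:longtaileddef} on the tail then yields \eqref{eq:monotonelong-tailed}. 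Finally, combining the single-coordinate result with the telescoping product from the first step completes the proof.
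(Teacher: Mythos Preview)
Your telescoping to single-coordinate shifts is valid, and so is the slab identification, but the case analysis that follows is too vague to constitute a proof: for the slab $S_j=\{y:\ y_i\geq x_i\ (i\neq j),\ x_j\leq y_j<x_j+\tau\}$ you never actually establish $\int_{S_j}b(|y|)\,dy=o(c(x))$ as $\m{x}\to\infty$. The slab is unbounded in $d-1$ directions, so neither ``window of width $\tau$'' in Case~1 nor the sketched Fubini factoring in Case~2 gives the bound without substantial further work, and you yourself flag this as the main obstacle.

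The observation you close with is the right one and makes the telescoping detour unnecessary. Substituting $z=y-h$ gives
\[
\frac{c(x+h)}{c(x)}=\frac{\displaystyle\int_{\Delta(x)}b(|z+h|)\,dz}{\displaystyle\int_{\Delta(x)}b(|z|)\,dz},
\]
and every $z\in\Delta(x)$ satisfies $|z|\geq\m{z}\geq\m{x}$; by Proposition~\ref{prop:rightsidelongtailedmeansspatially}, $b(|z+h|)/b(|z|)\to1$ uniformly as $|z|\to\infty$, hence uniformly on $\Delta(x)$ as $\m{x}\to\infty$, and \eqref{eq:monotonelong-tailed} follows immediately for every fixed $h\in\R_+^d$. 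You should lead with this instead of routing through the slab estimates.

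This is a genuinely different route from the paper's. The paper does not change variables; it compares $c(x)$ with the integral over the cube $\prod_j[x_j,x_j+R]$, on which $b(|y|)/b(|x|)\in[1-\eps,1+\eps]$ because $\bigl||y|-|x|\bigr|\leq R\sqrt d$ and $b$ is long-tailed, and then contrasts a small cube of side $\m{h}$ with a large one of side $R$ to obtain $1-c(x+h)/c(x)\leq\frac{1+\eps}{1-\eps}\,(\m{h}/R)^d$. As written, the paper's first displayed equality $c(x)-c(x+h)=\int_{\prod_j[x_j,x_j+h_j]}b(|y|)\,dy$ is not correct for $d>1$ (the set $\Delta(x)\setminus\Delta(x+h)$ is an L-shaped union of slabs, not a box); your change-of-variables argument is the clean way through and sidesteps this issue entirely.
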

\begin{proof}
Let $b$ be decreasing on $(\rho,\infty)$ for some $\rho>0$.
Fix an arbitrary $h\in\R_+^d$, $h\neq0$, and take any $R> \m{h}$.
Note that, for any $x,y\in\X$ such that $y_j\in[x_j,x_j+R]$, $1\leq j\leq d$, one has
  $\bigl\lvert |y| - |x|\bigr\rvert\leq |y-x|\leq R\sqrt{d}$.  
Assume now that $\m{x}\geq \rho+R\sqrt{d}$. Then, for any $y$ as above, 
  \[
  \biggl\lvert\frac{b(|y|)}{b(|x|)}-1\biggr\rvert \leq
  \sup_{|\tau|\leq R\sqrt{d}}\ 
    \biggl\lvert\frac{b(|x|+\tau)}{b(|x|)}-1\biggr\rvert \to 0, \quad |x|\to\infty,
  \]
  because of e.g. \cite[formula (2.18)]{FKZ2013}; cf. also \cite[Remark~2.1]{FT2017b}.

  Therefore, for any $\eps\in(0,1)$, there exists $r=r(\eps, R)>\rho+R\sqrt{d}$ such that, for all $x\in\X$ with $\m{x}\geq r$ (that, again, implies $|x|\geq r$), one has
  \[
  1-\eps \leq \frac{b(|y|)}{b(|x|)}\leq 1+\eps, \quad y_j\in[x_j,x_j+R], \ 1\leq j\leq d.
  \]
  As a result,
  \begin{align*}
  1-\frac{c(x+h)}{c(x)}
  &=\frac{\displaystyle\int_{x_1}^{x_1+h_1}\ldots\int_{x_d}^{x_d+h_d} b(|y|)\,dy}{\displaystyle\int_{x_1}^{\infty}\ldots\int_{x_d}^{\infty}
  b(|y|)\,dy}
  \leq \frac{\displaystyle\int_{x_1}^{x_1+\m{h}}\ldots\int_{x_d}^{x_d+\m{h}} \dfrac{b(|y|)}{b(|x|)}\,dy}{\displaystyle\int_{x_1}^{x_1+R}\ldots\int_{x_d}^{x_d+R} \dfrac{b(|y|)}{b(|x|)}\,dy}
  \\&\leq\frac{1+\eps}{1-\eps}\frac{\m{h}^d}{R^d}<\eps,
  \end{align*}
  provided that $R=R(\m{h},\eps)>\m{h}$ is chosen big enough. The statement is proved.
\end{proof}  
\begin{remark}
	Note that the previous result remains true if $c$ is defined by \eqref{eq:integralc} with $\Delta(x)$ replaced by $\Delta(x+x_0)$ for a fixed $x_0\in\X$.
\end{remark}

The following proposition gives a sufficient condition for \eqref{eq:condonparambelow}; the result is a generalization of \cite[Theorem~4.2]{FKZ2013}.
\begin{proposition}\label{prop:liminfbelow}
  Let $f\in L^1(\X\to\R_+)$ and $c:\X\to(0,\infty)$ be a bounded function such that \eqref{eq:long-tailed} holds. Then
  \begin{equation}\label{eq:straight}
    \liminf_{|x|\to\infty}\frac{(c*f)(x)}{c(x)}\geq \int_\X f(y)\,dy.
  \end{equation}
  Moreover, there exists $D>0$ such that
  \[
    (c*f)(x)\geq D c(x), \quad x\in\X.
  \]
\end{proposition}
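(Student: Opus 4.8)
The plan is to prove the asymptotic lower bound \eqref{eq:straight} first, and then deduce the uniform bound from it together with positivity of $c$ on compacts. For \eqref{eq:straight}, fix an arbitrary $\eps\in(0,1)$. Since $f\in L^1(\X\to\R_+)$, choose $R>0$ so large that $\int_{\X\setminus B_R(0)} f(y)\,dy<\eps$. Then split
\[
  \frac{(c*f)(x)}{c(x)}=\int_\X \frac{c(x-y)}{c(x)} f(y)\,dy
  \geq \int_{B_R(0)} \frac{c(x-y)}{c(x)} f(y)\,dy,
\]
where I have used $c\geq0$ and $f\geq0$ to discard the tail. On the ball $B_R(0)$, the long-tailed property \eqref{eq:long-tailed} gives
\[
  \lim_{|x|\to\infty} \sup_{|y|\leq R}\Bigl\lvert \frac{c(x-y)}{c(x)}-1\Bigr\rvert=0,
\]
so for $|x|$ large enough the ratio $\frac{c(x-y)}{c(x)}$ is at least $1-\eps$ uniformly in $y\in B_R(0)$. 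Hence
\[
  \frac{(c*f)(x)}{c(x)}\geq (1-\eps)\int_{B_R(0)} f(y)\,dy \geq (1-\eps)\Bigl(\int_\X f(y)\,dy-\eps\Bigr)
\]
for all large $|x|$, and letting $|x|\to\infty$ and then $\eps\to0+$ yields \eqref{eq:straight}.

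For the uniform bound, set $I:=\int_\X f(y)\,dy$. If $I=0$ then $f=0$ a.e.\ and any $D>0$ works (the conclusion is trivially $0\geq0$); so assume $I>0$. By \eqref{eq:straight} there exists $r_0>0$ such that $(c*f)(x)\geq \frac{I}{2}\,c(x)$ for all $\lvert x\rvert\geq r_0$. On the complementary compact set $\overline{B_{r_0}(0)}$, I need a pointwise positive lower bound for $\frac{(c*f)(x)}{c(x)}$; here I use that $c$ is bounded, say $c\leq \lVert c\rVert$, while $(c*f)(x)$ is bounded below on this compact by a positive constant. Indeed, since $I>0$ there is a measurable set of positive Lebesgue measure on which $f$ is bounded below by a positive constant, and using again that $c$ is strictly positive (being a fixed function $\X\to(0,\infty)$) one obtains $\inf_{\lvert x\rvert\leq r_0}(c*f)(x)>0$. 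Combining, $\frac{(c*f)(x)}{c(x)}\geq \frac{\inf_{\lvert x\rvert\leq r_0}(c*f)(x)}{\lVert c\rVert}=:D_1>0$ on $\overline{B_{r_0}(0)}$, and taking $D:=\min\{\frac{I}{2},D_1\}>0$ gives $(c*f)(x)\geq D\,c(x)$ for all $x\in\X$.

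The main obstacle is the compact-set estimate: one must be careful that "$c:\X\to(0,\infty)$ bounded" is genuinely enough to get $\inf_{\lvert x\rvert\leq r_0}(c*f)(x)>0$, since $c$ need not be continuous or bounded below globally. The argument requires only that $c$ is positive almost everywhere and that $f$ has positive mass; then for $\lvert x\rvert\leq r_0$ one writes $(c*f)(x)=\int_\X c(x-y)f(y)\,dy\geq \int_{B_N(0)} c(x-y)f(y)\,dy$ for a large ball where $\int_{B_N(0)} f>\frac{I}{2}$, and one localizes further to a subset where $f\geq \eta>0$; positivity of $c$ on the (bounded) set $B_{r_0+N}(0)$ then forces a uniform positive lower bound. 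A clean way to package this is to note that $x\mapsto(c*f)(x)$ is lower semicontinuous (as $c,f\geq0$ and $f\in L^1$, with $c$ bounded, $c*f$ is in fact continuous), so it attains its infimum on $\overline{B_{r_0}(0)}$, and that infimum cannot be $0$ because $(c*f)(x)=0$ would force $c(x-y)f(y)=0$ a.e.\ $y$, contradicting $c>0$ a.e.\ and $\int f>0$. I would phrase the write-up so that the only properties of $c$ invoked beyond boundedness and \eqref{eq:long-tailed} are that $c>0$ everywhere, which is exactly the standing hypothesis $c:\X\to(0,\infty)$.
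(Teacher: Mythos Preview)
Your proof is correct and follows essentially the same approach as the paper's: both restrict the convolution to a large ball on which the long-tailed ratio $\tfrac{c(x-y)}{c(x)}$ is close to $1$, obtain the lower bound $(1-\eps)\bigl(\int_\X f - \eps\bigr)$ (the paper writes $(1-\delta)^2\int_\X f$), and then handle the compact set by invoking continuity of $c*f$ together with pointwise positivity. The only cosmetic difference is that the paper cites an external lemma for the continuity of $c*f$ on $B_\rho(0)$, whereas you argue it directly from $c\in L^\infty$ and $f\in L^1$; your extended discussion of the compact-set step is more detailed than necessary but not wrong.
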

\begin{proof}
  For any $r>0$, we have
  \begin{align*}
    \frac{(c*f)(x)}{c(x)}
    &\geq \biggl(1-\sup_{|y|\leq r}\Bigl\lvert
    \frac{c(x-y)}{c(x)}-1\Bigr\rvert \biggr)\int_{|y|\leq r}f(y)\,dy.
  \end{align*}
  Take an arbitrary $\delta\in(0,1)$ and choose $r=r(\delta)>0$ such that
  $\int_{|y|\leq r}f(y)\,dy>(1-\delta)\int_\X f(y)\,dy$.
  Next, by \eqref{eq:long-tailed}, there exists $\rho=\rho(r)=\rho(\delta)\geq r$ such that
  $
  \sup\limits_{|y|\leq r}\bigl\lvert
    \frac{c(x-y)}{c(x)}-1\bigr\rvert<\delta,
  $ for all $|x|\geq\rho$. As~a~result, for any $\delta\in(0,1)$, there exists $\rho=\rho(\delta)>0$ such that
  \[
  \frac{(c*f)(x)}{c(x)}> (1-\delta)^2\int_{\X}f(y)\,dy, \quad |x|\geq\rho,
  \]
  that yields \eqref{eq:straight}. Finally, by e.g. \cite[Lemma~2.1]{FKT100-1}, $c*f$ is a continuous function on $B_\rho(0)$; then, it is easy to see that $c(x)>0$, $x\in\X$ implies that $(c*f)(x)>0$, $x\in\X$. Hence the boundedness of $c$ yields
  $\inf\limits_{|x|\leq \rho} \frac{(c*f)(x)}{c(x)}>0$, that fulfilled the statement.
\end{proof}

\section{Proofs}\label{sec:proof}

In this Section, $\beta>0$ is given by \eqref{assum:kappa>m}.

\subsection{Proofs of upper estimates}\label{subsec:convtozero}

In this Subsection, we are going to prove Theorem~\ref{thm:lin:est_above}.

\subsubsection{Preliminaries}
For a  function $\tomega:\X\to(0,+\infty)$, we define, for any $f:\X\to\R$,
\begin{equation}\label{eq:defofomeganorm}
  \lVert f\rVert_\tomega:=\sup_{x\in\X} \frac{|f(x)|}{\tomega(x)}\in[0,\infty].
\end{equation}
If $\tomega(x)=b(|x|)$, $x\in\X$, for a function $b:\R_+\to(0,\infty)$, we will use the notation $\|f\|_b:=\|f\|_\tomega$.

\begin{proposition}[\protect{cf.~\cite[Propostion 3.1]{FKT100-3}}] \label{prop:ba:bound_above_gen}
 Let a function $\tomega:\X\to(0,+\infty)$ be such that $a*\tomega$ is well-defined (for example, let $\tomega$ be bounded) and, for some $\ga\in(0,\infty)$,
\begin{equation} \label{eq:ba:Phi_est_suff_cond}
\dfrac{(a*\tomega)(x)}{\tomega(x)}\leq\ga, \quad x\in\X.
\end{equation}
 Let $0\leq u_0\in L^\infty(\X)$ and $\normom{u_0}<\infty$; let $w=w(x,t)$ be the corresponding solution to \eqref{eq:majorequation}. Then
\begin{equation} \label{eq:ba:u_est_above_goal}
  \normom{w(\cdot,t)}\leq \normom{u_0}e^{(\ka(\ga-1) +\beta) t},\quad  t\geq0.
\end{equation}
\end{proposition}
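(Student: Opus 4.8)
The plan is to exploit the explicit formula for the solution to the linear equation \eqref{eq:majorequation}, namely $w(\cdot,t)=e^{-mt}e^{\ka tA}u_0$ where $Au=a*u$, together with the weighted-norm estimate \eqref{eq:ba:Phi_est_suff_cond}. First I would observe that the $\tomega$-weighted supremum norm \eqref{eq:defofomeganorm} behaves well under convolution: if $\normom{f}<\infty$, then for a.a.\ $x\in\X$,
\[
  |(a*f)(x)| \leq \int_\X a(x-y)\,|f(y)|\,dy \leq \normom{f}\int_\X a(x-y)\,\tomega(y)\,dy = \normom{f}\,(a*\tomega)(x) \leq \ga\,\tomega(x)\normom{f},
\]
using \eqref{eq:ba:Phi_est_suff_cond} in the last step. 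Hence $A$ is a bounded operator on the weighted space $E_\tomega:=\{f: \normom{f}<\infty\}$ with operator norm $\|A\|_{\tomega}\leq\ga$.

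Next I would represent $w$ via the norm-convergent exponential series $e^{\ka tA}u_0 = \sum_{n=0}^\infty \frac{(\ka t)^n}{n!}A^n u_0$ (legitimate because $A$ is a bounded operator on $E$, hence on $E_\tomega$, and $u_0\in E_\tomega$ by hypothesis). Applying the weighted norm and the submultiplicativity $\normom{A^n u_0}\leq \ga^n\normom{u_0}$ gives
\[
  \normom{e^{\ka tA}u_0}\leq \sum_{n=0}^\infty \frac{(\ka t)^n}{n!}\ga^n\normom{u_0} = e^{\ka\ga t}\normom{u_0}.
\]
Multiplying by the scalar factor $e^{-mt}$ and recalling $\beta=\ka-m$ yields
\[
  \normom{w(\cdot,t)} = e^{-mt}\normom{e^{\ka tA}u_0}\leq e^{-mt}e^{\ka\ga t}\normom{u_0} = e^{(\ka\ga - m)t}\normom{u_0} = e^{(\ka(\ga-1)+\beta)t}\normom{u_0},
\]
which is exactly \eqref{eq:ba:u_est_above_goal}. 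Alternatively, and perhaps cleaner to present, one can avoid the series entirely: set $\phi(t):=\normom{w(\cdot,t)}$ and differentiate the mild/classical formulation, or directly estimate $\frac{d}{dt}\bigl(e^{mt}w\bigr)=\ka\,a*(e^{mt}w)$ pointwise divided by $\tomega$, obtaining $\phi(t)\leq \phi(0) + \ka\ga\int_0^t \phi(s)\,ds$ after the weighted bound, and then conclude by Gr\"onwall's inequality; I would mention this as the variant that most directly mirrors \cite[Proposition 3.1]{FKT100-3}.

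The only genuine point requiring care — the main (minor) obstacle — is justifying that the manipulations are valid when $\normom{u_0}$ is finite but $\tomega$ is merely bounded and possibly not bounded below, so that $E_\tomega$ is strictly larger than $E$; in particular one must check that $a*\tomega$ is well-defined (guaranteed by boundedness of $\tomega$ as stated in the hypothesis) and that the interchange of the integral defining $a*f$ with the supremum defining $\normom{\cdot}$, and of the sum with $\normom{\cdot}$ in the series representation, are all legitimate — each follows from Tonelli/monotone convergence since all quantities involved are nonnegative (note $u_0\geq0$ forces $w\geq0$, so no absolute values are lost). Everything else is the routine exponential-series or Gr\"onwall computation carried out above.
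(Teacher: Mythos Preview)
Your argument is correct and is essentially the same as the paper's: the paper expands the explicit solution $w(x,t)=e^{-mt}\sum_{n\geq0}\frac{(\ka t)^n}{n!}\,a^{*n}*u_0$, uses $0\leq u_0\leq\normom{u_0}\tomega$ and $(a^{*n}*\tomega)(x)\leq\ga^n\tomega(x)$ pointwise, and sums the series to obtain $w(x,t)\leq\normom{u_0}\,\tomega(x)\,e^{(\ka\ga-m)t}$. Your Gr\"onwall alternative is not used in the paper, and your remarks about operator norms on $E_\tomega$ and justifying interchanges are more than the paper spells out, but the substance is identical.
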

\begin{proof}
The solution to \eqref{eq:majorequation} is given by
\begin{equation}\label{eq:explsoltolin}
  w(x,t)=e^{-mt}u_0(x)+e^{-mt}\sum_{n=1}^\infty \frac{(\ka t)^n}{n!}\bigl(a^{*n}*u_0\bigr)(x),
\end{equation}
for $x\in\X,\ t\geq0$, where $a^{*n}:=a*\ldots * a$ (the convolution is taken $n-1$ times). Since $\normom{u_0}<\infty$, we have 
$0\leq u_0(x)\leq \normom{u_0}\tomega(x)$, $x\in\X$. Next, \eqref{eq:ba:Phi_est_suff_cond} evidently implies 
\[
  \bigl(a^{*n}*\tomega\bigr)(x)\leq \ga^n\tomega(x), \quad x\in\X.  
\]
As a result, we get from \eqref{eq:explsoltolin} that
\begin{align*}
0\leq w(x,t)&\leq e^{-mt}\normom{u_0}\tomega(x)+e^{-mt}\normom{u_0}\sum_{n=1}^\infty \frac{(\ka t)^n}{n!}\ga^n\tomega(x)\\
&=\normom{u_0}\tomega(x) \Bigl(e^{-mt}+e^{-mt}\bigl(e^{\ka \ga t}-1\bigr)\Bigr)=\normom{u_0}\tomega(x)e^{(\ka \ga-m) t},
\end{align*}
that implies \eqref{eq:ba:u_est_above_goal}.

\end{proof}

\begin{remark}
In \cite[Propostion 3.1]{FKT100-3}, we considered, for an arbitrary $\la>0$ and a unit vector, $\xi\in\X$, the function $\tomega(x)=e^{-\la x\cdot\xi}$ (recall that $x\cdot\xi$ stands for the scalar product in $\X$). Then, clearly, $\frac{(a^+*\tomega)(x)}{\tomega(x)}\equiv \int_\X a^+(y) e^{\la y\cdot \xi}\,dy=:\ga$, provided that the latter integral is finite (that was the crucial assumption to get the constant speed of the front in \cite{FKT100-3}). Note that then \cite[Proposition~2.4]{FKT100-2} and \eqref{eq:ba:u_est_above_goal} implies that
$w(x,t)\leq \alpha_\xi e^{\beta_\xi t-\la_\xi x\cdot\xi}$, $x\in\X$, $t\geq0$ for some $\alpha_\xi,\la_\xi>0$, $\beta_\xi\in\R$.
\end{remark}

\begin{proposition}[\!\!\!{\cite[Proposition~2.4]{FT2017b}}]\label{prop:ba:suff_cond1}
  Let a function $\upomega:\X\to(0,+\infty)$ be~such that, for any $\la>0$, 
  \begin{equation}\label{eq:defofsetOmegala}
    \{\upomega<\la\}\neq\emptyset,
  \end{equation}
cf.~\eqref{eq:levelset}. Suppose further that 
\begin{equation} \label{eq:ba:23r23Phi_est_suff_cond}
	\eta:=\limsup_{\la\to0+}\sup_{x\in\{\upomega<\la\}}\dfrac{( a *\upomega)(x)}{\upomega(x)}\in(0,\infty).
\end{equation}
Then, for any $\delta\in(0,1)$, there exists $\la=\la(\delta,\upomega)\in(0,1)$ such that \eqref{eq:ba:Phi_est_suff_cond} holds, with
 \begin{equation}\label{eq:defofomegala}
    \tomega(x):=\upomega_\la(x):=\min\bigl\{\la,\upomega(x)\bigr\}, \quad x\in\X,
  \end{equation}
and $\ga:=\max\{1,(1+\delta)\eta\}$.
\end{proposition}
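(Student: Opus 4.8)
The plan is to convert the local hypothesis \eqref{eq:ba:23r23Phi_est_suff_cond} into a uniform bound on a single sub-level set of $\upomega$, and then to verify the global estimate \eqref{eq:ba:Phi_est_suff_cond} for the truncation $\tomega=\upomega_\la$ of \eqref{eq:defofomegala} by splitting $\X$ according to whether $\upomega(x)\geq\la$ or $\upomega(x)<\la$. First I would use $\eta\in(0,\infty)$ together with the definition of the $\limsup$ in \eqref{eq:ba:23r23Phi_est_suff_cond} to pick $\la=\la(\delta,\upomega)\in(0,1)$ small enough that
\[
\sup_{x\in\{\upomega<\la\}}\frac{(a*\upomega)(x)}{\upomega(x)}\leq(1+\delta)\eta ,
\]
where $\{\upomega<\la\}\neq\emptyset$ by \eqref{eq:defofsetOmegala}; in particular this choice forces $(a*\upomega)(x)<\infty$ for every $x$ in that set. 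Fixing this $\la$ and setting $\tomega:=\min\{\la,\upomega\}$, we have $0<\tomega\leq\upomega$ pointwise and $\tomega\leq\la$, so $a*\tomega$ is well-defined.

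Next I would check \eqref{eq:ba:Phi_est_suff_cond} pointwise. If $\upomega(x)\geq\la$, then $\tomega(x)=\la$, and since $\tomega\leq\la$ everywhere while $a$ is a probability density, $(a*\tomega)(x)\leq\la\int_\X a(x-y)\,dy=\la=\tomega(x)$, so the ratio at $x$ is at most $1\leq\ga$. If instead $\upomega(x)<\la$, then $\tomega(x)=\upomega(x)>0$, and monotonicity of the integral together with the displayed bound gives
\[
(a*\tomega)(x)\leq(a*\upomega)(x)\leq(1+\delta)\eta\,\upomega(x)=(1+\delta)\eta\,\tomega(x),
\]
so the ratio at $x$ is at most $(1+\delta)\eta\leq\ga$. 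Combining the two cases yields \eqref{eq:ba:Phi_est_suff_cond} with $\ga=\max\{1,(1+\delta)\eta\}$, which is the claim.

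I do not anticipate a genuine obstacle: the normalization $\int_\X a=1$ does all the work on $\{\upomega\geq\la\}$, and the smallness of $\la$ does the rest on $\{\upomega<\la\}$. The only step needing a little care is the first one — reading \eqref{eq:ba:23r23Phi_est_suff_cond} correctly (the inner supremum is non-decreasing in $\la$, so the $\limsup$ is in fact a limit) so that one fixed small $\la$ serves simultaneously for all $x\in\{\upomega<\la\}$, and noting that this same choice keeps $(a*\upomega)$ finite on that set, so that the middle inequality above is a usable finite estimate rather than the vacuous $\leq\infty$.
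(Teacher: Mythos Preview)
Your argument is correct. The paper does not supply its own proof of this proposition; it is quoted verbatim from \cite[Proposition~2.4]{FT2017b}, so there is no in-paper proof to compare against. Your two-case split on $\{\upomega\geq\la\}$ versus $\{\upomega<\la\}$, using $\int_\X a=1$ on the first set and the choice of $\la$ from \eqref{eq:ba:23r23Phi_est_suff_cond} on the second, is exactly the natural proof and matches what one finds in the cited reference. Your remark that the inner supremum is non-decreasing in $\la$ (hence the $\limsup$ is actually a limit) is the right way to justify the existence of a single $\la$ that works uniformly on $\{\upomega<\la\}$.
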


\begin{proposition}\label{prop:mainabove}
Let  $\upomega$ be constructed by $b\in\Dt$ (see Definition~\ref{def:beforeproofs}) and satisfy,
    cf.~\eqref{eq:ba:23r23Phi_est_suff_cond},
   \begin{equation} \label{eq:aconvomegaoveromega}
		 \limsup_{\la\to0+}\sup_{x\in\{\upomega<\la\}}\dfrac{(a*\upomega)(x)}{\upomega(x)}\leq1.
\end{equation}
   Let $0\leq u_0\in L^\infty(\X)$ be such that
$ \lVert u_0\rVert_{\upomega}<\infty$, cf.~\eqref{eq:defofomeganorm}, and let $w=w(x,t)$ be the corresponding solution to \eqref{eq:majorequation}. Then, for any $\eps\in(0,1)$, there exist $A_\eps>0$ and $t_0=t_0(\eps)>0$ such that
\begin{equation}\label{eq:fullineqabove}
	\esssup_{x\notin\La(t{+}\eps t,\upomega)} w(x,t)\leq \bigl(A_\eps+ \lVert u_0\rVert_{\upomega} \bigr) e^{-\frac{\eps\beta}{2}t}, \quad t\geq t_0.
\end{equation}
\end{proposition}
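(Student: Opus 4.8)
The strategy is to combine the three previously established results on the linear equation \eqref{eq:majorequation} with the inclusion properties of the level sets $\La(t,c)$. First I would apply Proposition~\ref{prop:ba:suff_cond1} to the function $\upomega$: by hypothesis \eqref{eq:aconvomegaoveromega}, the quantity $\eta$ in \eqref{eq:ba:23r23Phi_est_suff_cond} is at most $1$ (and it is positive, since $a$ is a probability kernel and $\upomega>0$), so for any $\delta\in(0,1)$ there is a $\la=\la(\delta)\in(0,1)$ such that the truncated weight $\tomega=\upomega_\la=\min\{\la,\upomega\}$ satisfies the global bound \eqref{eq:ba:Phi_est_suff_cond} with $\ga=\max\{1,(1+\delta)\eta\}\le 1+\delta$. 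Note that $\|u_0\|_{\upomega_\la}\le\max\{1,\frac1\la\}\,\|u_0\|_{\upomega}$ since $\upomega_\la\ge\min\{\la,1\}\upomega$ when $\upomega$ is bounded (say by $1$, which holds after normalisation for $b\in\Dt$); more simply, $\upomega_\la\le\la$ forces $\|u_0\|_{\upomega_\la}\le\frac1\la\|u_0\|_\infty<\infty$, and also $\upomega_\la=\upomega$ on the region where $\upomega<\la$, which is all we will use. Then Proposition~\ref{prop:ba:bound_above_gen} applied with weight $\upomega_\la$ gives $\|w(\cdot,t)\|_{\upomega_\la}\le \|u_0\|_{\upomega_\la}\,e^{(\ka(\ga-1)+\beta)t}\le \|u_0\|_{\upomega_\la}\,e^{(\ka\delta+\beta)t}$ for $t\ge0$.

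Next I would convert this weighted-norm bound into the pointwise decay estimate outside $\La(t+\eps t,\upomega)$. Fix $\eps\in(0,1)$ and choose $\delta>0$ small enough that $\ka\delta\le\frac{\eps\beta}{4}$, say. For $x\notin\La(t+\eps t,\upomega)$ we have, by definition \eqref{eq:defLa}, $\upomega(x)<e^{-\beta(1+\eps)t}$; taking $t$ large enough (depending on $\eps$ through $\la$) that $e^{-\beta(1+\eps)t}<\la$, we get $\upomega(x)<\la$, hence $\upomega_\la(x)=\upomega(x)<e^{-\beta(1+\eps)t}$. Therefore
\begin{equation*}
w(x,t)\le \|w(\cdot,t)\|_{\upomega_\la}\,\upomega_\la(x)
\le \|u_0\|_{\upomega_\la}\,e^{(\ka\delta+\beta)t}\,e^{-\beta(1+\eps)t}
= \|u_0\|_{\upomega_\la}\,e^{(\ka\delta-\eps\beta)t}
\le \|u_0\|_{\upomega_\la}\,e^{-\frac{3\eps\beta}{4}t},
\end{equation*}
which already beats the claimed rate $e^{-\frac{\eps\beta}{2}t}$. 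To absorb the constant into the stated form $(A_\eps+\|u_0\|_{\upomega})e^{-\frac{\eps\beta}{2}t}$, I would bound $\|u_0\|_{\upomega_\la}$ in terms of $\|u_0\|_{\upomega}$ and $\|u_0\|_\infty\le\theta$ (or simply $\|u_0\|_\infty$): since $\upomega_\la=\upomega$ where $\upomega<\la$ and $\upomega_\la=\la\ge$ a positive constant elsewhere on any fixed bounded set, one has $\|u_0\|_{\upomega_\la}\le \|u_0\|_\upomega + C_\la\|u_0\|_\infty$ with $C_\la$ depending only on $\la(\eps)$; setting $A_\eps:=C_\la\|u_0\|_\infty$ (or, if one prefers $A_\eps$ independent of $u_0$, using $\|u_0\|_\infty\le\theta$ when that bound is available) and choosing $t_0=t_0(\eps)$ so that both $e^{-\beta(1+\eps)t_0}<\la$ and the extra exponential slack $e^{-\frac{\eps\beta}{4}t}\le 1$ hold, yields \eqref{eq:fullineqabove} for $t\ge t_0$.

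The only genuinely delicate point is the bookkeeping in the second step: one must verify that for $x$ outside the level set $\La(t+\eps t,\upomega)$ the truncated weight $\upomega_\la$ actually agrees with $\upomega$, which is exactly where the threshold $\la=\la(\eps)$ from Proposition~\ref{prop:ba:suff_cond1} forces a lower bound $t\ge t_0(\eps)$ on the times for which the estimate is valid — this is the reason the conclusion is only asymptotic. The nonemptiness condition \eqref{eq:defofsetOmegala} needed to invoke Proposition~\ref{prop:ba:suff_cond1} is immediate from Remark~\ref{re:coolssls}, since $\upomega$ is constructed by $b\in\Dt$ and hence $\{\upomega<\la\}\ne\emptyset$ for every $\la>0$. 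Everything else is a routine chaining of the exponential rates, and no new analytic input beyond Propositions~\ref{prop:ba:bound_above_gen} and \ref{prop:ba:suff_cond1} is required.
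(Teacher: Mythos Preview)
Your proposal is correct and follows essentially the same route as the paper: apply Proposition~\ref{prop:ba:suff_cond1} to obtain the truncated weight $\upomega_\la$ with $\ga=1+\delta$, invoke Proposition~\ref{prop:ba:bound_above_gen}, then on the set $\X\setminus\La(t+\eps t,\upomega)$ use that $\upomega_\la=\upomega<e^{-\beta(1+\eps)t}$ once $t$ is large enough, and tune $\delta$ against $\eps$. The paper makes the constant explicit as $A_\eps=\tfrac{\|u_0\|_\infty}{\la}$ (your $C_\la\|u_0\|_\infty$ with $C_\la=\tfrac1\la$) and picks $\delta=\tfrac{\eps\beta}{2\ka}$ exactly rather than your slightly sharper $\ka\delta\le\tfrac{\eps\beta}{4}$, but these are cosmetic differences.
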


\begin{proof}
Take an arbitrary $\eps\in(0,1)$ and let $\delta=\delta(\eps)\in(0,1)$ be chosen later.
By Proposition~\ref{prop:ba:suff_cond1}, there exists $\la=\la(\delta,\upomega)=\la(\eps,\upomega)\in(0,1)$ such that  \eqref{eq:ba:Phi_est_suff_cond} holds, with $ \tomega$ given by \eqref{eq:defofomegala} and $\ga=1+\delta$. Set $\|u_0\|_\infty:=\|u_0\|_{L^\infty(\X)}$.
   Note that
   \begin{equation}\label{eq:uoiomegalambda}
		 \frac{u_0(x)}{\upomega_\la(x)}\leq \frac{\theta}{\la}\1_{\X\setminus\{\upomega<\la\}}(x)+
		 \frac{u_0(x)}{\upomega(x)}\1_{\{\upomega<\la\}}(x)
\leq \frac{\|u_0\|_{\infty}}{\la}+\lVert u_0\rVert_{\upomega}
<\infty,
\end{equation}
and one can apply Proposition~\ref{prop:ba:bound_above_gen}. Namely, setting $A_\eps:=\frac{\|u_0\|_{\infty}}{\la}>0$, one gets from
\eqref{eq:uoiomegalambda}, \eqref{eq:ba:u_est_above_goal} that, for a.a.~$x\in\{\upomega<\la\}$ and for all $t\geq0$,
  \begin{equation}
 w(x,t)\leq \lVert u_0\rVert_{\upomega_\la} e^{(\ka \delta+\beta)t}\upomega_\la(x)
    \leq \bigl(A_\eps+\lVert u_0\rVert_{\upomega}\bigr) e^{(\ka \delta+\beta)t}\upomega(x).\label{eq:crucial21213}
    \end{equation}
 By \eqref{eq:defofsetOmegala},
\begin{equation}\label{eq:simpleuseful}
	\X\setminus\La(t{+}\eps t,\upomega)=\{\upomega<e^{-\beta(t{+}\eps t)}\},\quad t>0.
\end{equation}
 Set
 $ t_0=t_0(\eps):=-\frac{1}{(1+\eps)\beta}\log\la>0$.
 One gets from \eqref{eq:simpleuseful} that, for any $t\geq t_0$, 
 \[
	 \X\setminus\La(t{+}\eps t,\upomega)\subset \X\setminus\La(t_0{+}\eps t_0,\upomega)=\{\upomega<\la\}.
 \]
 Hence, by \eqref{eq:crucial21213}, \eqref{eq:simpleuseful}, for a.a.~$x\in\X\setminus\La(t{+}\eps t,\upomega)$, one gets
    \begin{equation*}
  w(x,t)
    \leq \bigl(A_\eps+\lVert u_0\rVert_{\upomega}\bigr) e^{(\ka \delta+\beta)t}\upomega(x)
		\leq \bigl(A_\eps+\lVert u_0\rVert_{\upomega}\bigr) e^{(\ka \delta+\beta)t}e^{-\beta(t{+}\eps t)},
    \end{equation*}
    and
    \[
      \ka \delta+\beta  -\beta(1+\eps)=\ka \delta -\beta\eps= -\frac{\eps\beta}{2},
    \]
    if only we set from the very beginning $\delta:=\frac{\eps\beta}{2\ka  }$. The statement is proved.
\end{proof}
\begin{remark}
  It is easy to see from the proof above, that the denominator $2$ in the right-hand side of \eqref{eq:fullineqabove} can be changed on $1+\nu$, for an arbitrary $\nu\in(0,1)$; then $t_0=t_0(\eps,\nu)$.
\end{remark}

\subsubsection{\protect Proof of Proposition~\ref{prop:convenienttoexplain}}

We are going to show now that for the functions $\upomega=c_\alpha$ from \eqref{eq:defofcalpha} the inequality \eqref{eq:aconvomegaoveromega} holds.

\begin{proposition}\label{prop:newsimple}
Let $b\in\Et $. Then there exists $\alpha_1\in(0,1)$ such that, for all $\alpha\in[\alpha_1,1]$, $b^\alpha\in\Et $.
\end{proposition}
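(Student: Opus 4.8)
The plan is to verify the defining properties of $\Et$ (Definition~\ref{def:super-subexp}) for the function $b^\alpha$, choosing $\alpha$ sufficiently close to $1$. Fix $b\in\Et$ with defining data $\delta=\delta_b\in(0,1)$, increasing function $h=h_b$, and $b(\rho)\leq1$. Several properties transfer to $b^\alpha$ for \emph{every} $\alpha\in(0,1]$, and I would dispose of these first: $b^\alpha$ is bounded (as $b$ is bounded and $b(\rho)\leq 1$), strictly decreasing on $[\rho,\infty)$ to $0$, with $b^\alpha(\rho)\leq 1$ and $\inf_{|s|<r}b^\alpha(s)=(\inf_{|s|<r}b(s))^\alpha>0$; it is long-tailed because $b^\alpha(s+\tau)/b^\alpha(s)=(b(s+\tau)/b(s))^\alpha\to1$; and it is tail-log-convex because $\log b^\alpha=\alpha\log b$ is convex. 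For the $h$-requirement I would keep the same function $h=h_b$ (so $h(s)<s/2$ and $h(s)\to\infty$ are untouched) and note $b^\alpha(s\pm h(s))/b^\alpha(s)\to1$ just as before; only the condition $\lim_{s\to\infty}b^\alpha(h(s))s^{1+\delta'}=0$ for some $\delta'\in(0,1)$ needs work, and I would obtain it by choosing $\delta'<\min\{1,\alpha(1+\delta)-1\}$ (possible once $\alpha>\frac{1}{1+\delta}$), so that $\frac{1+\delta'}{\alpha}\leq1+\delta$ and hence $b^\alpha(h(s))s^{1+\delta'}=\bigl(b(h(s))s^{(1+\delta')/\alpha}\bigr)^\alpha\leq\bigl(b(h(s))s^{1+\delta}\bigr)^\alpha\to0$.

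The remaining points — the integrability $\int_0^\infty b^\alpha(s)s^{d-1}\,ds<\infty$ needed for $b^\alpha\in\Dt$, and, when $d>1$, the dichotomy \eqref{eq:polynomialb}--\eqref{eq:quicklydecreasing} — I would handle through the polynomial upper bound $b(s)\leq C_b(1+s)^{-(1+\delta)}$, valid for every $b\in\Et$; this follows from $b(h(s))s^{1+\delta}\to0$ together with $h$ being increasing with $h(s)<s/2\to\infty$ (so that for $t$ in the range of $h$ one has $b(t)\leq (h^{-1}(t))^{-(1+\delta)}\leq(2t)^{-(1+\delta)}$ for large $t$, and the monotonicity of $b$ fills the gaps), and it is among the properties of $\Et$ recorded in \cite{FT2017b}. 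Given this bound, for $d=1$ one gets $b^\alpha(s)=O(s^{-\alpha(1+\delta)})$, integrable on a tail precisely when $\alpha(1+\delta)>1$. For $d>1$: if $b$ has the polynomial form $b(s)=M(1+s)^{-(d+\mu)}$, then $b^\alpha(s)=M^\alpha(1+s)^{-(d+\mu')}$ with $\mu'=\alpha(d+\mu)-d>0$ as soon as $\alpha>\frac{d}{d+\mu}$, so $b^\alpha$ again satisfies \eqref{eq:polynomialb} and in particular lies in $\Dt$; if instead $b$ satisfies \eqref{eq:quicklydecreasing}, then for each $\nu\geq1$ one has $b^\alpha(s)s^\nu=\bigl(b(s)s^{\nu/\alpha}\bigr)^\alpha\to0$ because $\nu/\alpha\geq\nu\geq1$, so $b^\alpha$ again satisfies \eqref{eq:quicklydecreasing} and is in $\Dt$.

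Collecting the constraints, I would set $\alpha_1$ to be any number in $\bigl(\max\{\tfrac{1}{1+\delta_b},\tfrac{d}{d+\mu}\},1\bigr)$, with the second term present only in the case $d>1$ with $b$ of the form \eqref{eq:polynomialb}; then for all $\alpha\in[\alpha_1,1]$ every defining property of $\Et$ holds for $b^\alpha$, i.e. $b^\alpha\in\Et$. The one step demanding genuine care rather than bookkeeping is establishing (or precisely citing) the polynomial decay bound $b(s)=O(s^{-(1+\delta_b)})$ from the abstract $h$-condition, since everything else is a direct power-$\alpha$ manipulation; this is the main obstacle, and I expect to rely on \cite{FT2017b} for it.
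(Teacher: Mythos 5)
Your proof is correct, and it reaches the same endpoint by a genuinely more hands-on route than the paper does. The paper's proof only verifies the finiteness of $\int_0^\infty b(s)^{\alpha'} s^{d-1}\,ds$ for some $\alpha'$ (distinguishing the cases $d=1$, $d>1$ with \eqref{eq:polynomialb}, and $d>1$ with \eqref{eq:quicklydecreasing}, exactly as you do) and then delegates the entire rest of the verification — that $b^\alpha$ is still long-tailed, tail-log-convex, satisfies the $h$-condition, and respects the dichotomy for $d>1$ — to \cite[Theorem 3.1]{FT2017b}, whose very purpose is this stability of $\Et$ under powers. You instead unpack the definition of $\Et$ item by item: your power-$\alpha$ manipulations for long-tailedness, log-convexity, and the modified $h$-condition (with $\delta'<\min\{1,\alpha(1+\delta)-1\}$ and the same $h$) are exactly the right elementary computations, and your treatment of the $d>1$ dichotomy is the natural one. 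The one step where you correctly flag that you are not fully elementary is the polynomial tail bound $b(s)=O(s^{-(1+\delta)})$, and your sketch of how it falls out of $b(h(s))s^{1+\delta}\to0$ together with $h(s)<s/2$ is sound; you are right that this is a recorded property of $\Et$ in \cite{FT2017b}. The trade-off: the paper's proof is shorter because it uses a stronger imported theorem, while yours is longer but makes the mechanism transparent and only imports a decay estimate. Both are valid; the only thing your write-up leaves implicit is that your final choice of $\alpha_1$ must also dominate the threshold needed for the $\Dt$-integrability in the $d>1$ polynomial case, which you do in fact include in your $\max\{\tfrac{1}{1+\delta_b},\tfrac{d}{d+\mu}\}$.
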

\begin{proof}
Let $d>1$ and $b\in\Et$. If $b$ is given by \eqref{eq:polynomialb}, then, for any $\alpha'\in\bigl(\frac{d}{d+\mu},1)$,
\begin{equation}
\int_0^\infty b(s)^{\alpha'} s^{d-1}\,ds<\infty. \label{eq:finite_d-1_momentalpha}
\end{equation}
If $b$ is such that, for all $\nu\geq1$, \eqref{eq:quicklydecreasing} holds, then, evidently, \eqref{eq:finite_d-1_momentalpha} holds for all $\alpha'\in(0,1)$.
For $d=1$ and $b\in\Et[1]$, \eqref{eq:finite_d-1_momentalpha} holds if only $\alpha'\in\bigl(\frac{1}{1+\delta},1)$, where $\delta$ is sufficiently small.
Then, by \cite[Theorem 3.1]{FT2017b}, there exists $\alpha_1\in(\alpha',1)$ such that, for all $\alpha\in[\alpha_0,1]$, $b^\alpha\in\Et$.
The proof is fulfilled.
\end{proof}

The following proposition ensures that \eqref{eq:aconvomegaoveromega} holds for $\upomega=c_\alpha$ given as in to \eqref{eq:radialc}.
\begin{proposition}[\!\!{\cite[Propositions 4.2, 4.3]{FT2017b}}]\label{prop:twoinone}
	Let \eqref{ass:newA5} hold with $b_1\in\Dt$ which is strictly decreasing on $\R_+$ and log-equivalent to a function $b\in\Et $. 
	Then there exists $\alpha_1\in(0,1)$ such that, for all $\alpha\in(\alpha_1,1)$,
 the function $\upomega(x)=b(|x|)^\alpha$, $x\in\X$, satisfies \eqref{eq:aconvomegaoveromega}.
\end{proposition}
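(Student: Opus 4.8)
The plan is to reduce the multidimensional estimate to a one-dimensional statement about the function $b^\alpha$ on $\R_+$, exploiting the radial structure of $\upomega(x)=b(|x|)^\alpha$. First I would fix notation: for $x$ with $|x|$ large, split the convolution $(a*\upomega)(x)=\int_\X a(x-y)b(|y|)^\alpha\,dy$ into the contributions of $|y-x|\le h(|x|)$ and $|y-x|>h(|x|)$, where $h=h_b$ is the increasing auxiliary function from Definition~\ref{def:super-subexp} (using $b^\alpha$'s own $h$, which by Proposition~\ref{prop:newsimple} may be taken to be $h_b$ after enlarging $\alpha_1$). On the first (inner) region, the defining property $\lim_{s\to\infty}\frac{b(s\pm h(s))}{b(s)}=1$, together with tail-log-convexity, gives $b(|y|)^\alpha\le (1+o(1))\,b(|x|)^\alpha$ uniformly, so that part of the integral is at most $(1+o(1))\,b(|x|)^\alpha\int_\X a(z)\,dz=(1+o(1))\,b(|x|)^\alpha$ since $a$ is a probability kernel. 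The whole point is that the remaining (outer) region contributes a lower-order term relative to $b(|x|)^\alpha$.

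The key step is therefore to control the tail integral $\int_{|z|>h(|x|)} a(z)\,b(|x-z|)^\alpha\,dz$ and show it is $o\bigl(b(|x|)^\alpha\bigr)$ as $|x|\to\infty$, uniformly over $x$ in the relevant level set $\{\upomega<\la\}$ (equivalently $|x|$ large). Here I would use the hypothesis $b_1\in\Dt$ with $a(x)\le b_1(|x|)$ and $b_1$ log-equivalent to $b$: this pins down the decay rate of $a$ in terms of $b$. One bounds $a(z)\le b_1(|z|)$ and splits further according to whether $|x-z|$ is small or comparable to $|x|$; the worst case is $|x-z|$ small (so $b(|x-z|)^\alpha$ is of order a constant) which forces $|z|\approx|x|$, and then $b_1(|z|)\le b_1(|x|-\text{const})$, which by log-equivalence is comparable to $b(|x|)^{1+o(1)}$, i.e. dominated by $b(|x|)^\alpha$ once $\alpha<1$ is bounded away from $1$ appropriately. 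The other subcase, $|x-z|$ also large, is handled because both $b_1$ and $b$ are integrable against $s^{d-1}$ (after raising to a power $>\frac{d}{d+\mu}$ in the polynomial case, or trivially in the rapidly-decreasing case) and because $b$ is tail-log-convex, so a Young-type bound $b_1^{?}*b^{?}\lesssim b^{\min(\cdot)}$ applies. The condition $b(h(s))s^{1+\delta}\to0$ enters precisely to make the inner/outer split quantitative: $h(|x|)\to\infty$ fast enough that the tail mass of $a$ outside $B_{h(|x|)}$ is $O(|x|^{-1-\delta})$-small relative to $b(|x|)^\alpha$ after using $a\le b_1(|x|)$ and log-equivalence.

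I would organize the write-up as: (i) reduce to $|x|$ large via Remark~\ref{re:coolssls}; (ii) choose $\alpha_1$ so that $b^\alpha\in\Et$ for $\alpha\in(\alpha_1,1)$ (Proposition~\ref{prop:newsimple}) and so that $b^\alpha$'s auxiliary function is $h_b$; (iii) inner estimate via the two limits defining $\Et$; (iv) outer estimate via $a\le b_1(|x|)$, log-equivalence, the $d-1$-moment finiteness, and tail-log-convexity; (v) combine and let $|x|\to\infty$, then $\la\to0+$. The main obstacle I expect is step (iv): getting the tail bound $\int_{|z|>h(|x|)}a(z)b(|x-z|)^\alpha\,dz = o(b(|x|)^\alpha)$ \emph{uniformly}, because one must simultaneously juggle the decay of $a$ (only known through $b_1$, and only up to log-equivalence, not pointwise comparability) and the log-convexity of $b$ to rule out the scenario where mass of $a$ sits at moderate distances $|z|\sim |x|/2$ where $b(|x-z|)^\alpha$ and $b_1(|z|)$ are both ``medium''. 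Fortunately this is exactly the situation already resolved in \cite[Propositions 4.2, 4.3]{FT2017b} for the radial case, so the cleanest route is to invoke those results directly, checking only that the hypotheses there (strict monotonicity of $b_1$ on $\R_+$, log-equivalence to $b\in\Et$, and $b^\alpha\in\Et$ for $\alpha$ near $1$) are met here — which they are by assumption together with Proposition~\ref{prop:newsimple}.
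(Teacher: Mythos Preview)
Your final recommendation---invoke \cite[Propositions~4.2, 4.3]{FT2017b} directly after verifying their hypotheses via Proposition~\ref{prop:newsimple}---is exactly what the paper does: the proposition is stated as a citation and given no proof in the text. The preceding sketch of a self-contained inner/outer decomposition is a plausible outline of how those cited results are actually proved, but it is surplus to requirements here; the paper treats the statement as a black box imported from \cite{FT2017b}.
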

Now we will show that \eqref{eq:aconvomegaoveromega} holds for $\upomega=c_\alpha$ given as in \eqref{eq:integralc}.
We start with the following definition.
\begin{definition}
Let $p(x)=b(|x|)$, for $b\in\Dt$. For any  $\la\in\bigl(0,b(0)\bigr)$, we set
\begin{equation}\label{eq:setTheta}
	\Theta_\la(p):=\bigl\{x\in\X:\Delta(x)\subset \{p<\la\}\bigr\},
\end{equation}
where $\Delta(x)$ is given by \eqref{eq:bigDelta} and $\{p<\la\}$ is defined as in \eqref{eq:levelset}.
\end{definition}

\begin{proposition}\label{prop:fromStoN}
Let $p(x)=b(|x|)$, for $b\in\Dt$.
Suppose that \eqref{eq:aconvomegaoveromega} holds with $\upomega=p$.
Let $c$ be given by \eqref{eq:integralc}.
Then the following analogue to \eqref{eq:aconvomegaoveromega} holds:
 \begin{equation} \label{eq:aconvomegaoveromegaanalogue}
\limsup_{\la\to0+}\sup_{x\in \Theta_\la(p)}\dfrac{( a *c)(x)}{c(x)}\leq 1.
\end{equation}
\end{proposition}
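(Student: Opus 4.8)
The plan is to reduce the statement about $c$ (the "monotone" case) to the already-established bound \eqref{eq:aconvomegaoveromega} for $p(x)=b(|x|)$, by exploiting the structure of $c(x)=\int_{\Delta(x)}b(|y|)\,dy$ as an iterated tail-integral. First I would fix a large $R>0$ and split the convolution $(a*c)(x)=\int_\X a(z)c(x-z)\,dz$ into the part where $|z|\leq R$ and the part where $|z|>R$. For the far part, since $a\in\Dt$ is tail-decreasing with $\int_\X a(y)|y|^{\,?}$-type control — more precisely, since $\int_0^\infty b_1(s)s^{d-1}\,ds<\infty$ and $a(x)\leq b_1(|x|)$ by \eqref{ass:newA5} (here I use that $b_1\in\Dt$, as in the hypotheses of the propositions leading up to this one) — the integral $\int_{|z|>R}a(z)\,dz$ can be made arbitrarily small, and using $c(x-z)\leq c(x)\,\cdot(\text{something controlled})$ together with the long-tailed behaviour of $c$ along coordinate directions (Proposition with \eqref{eq:monotonelong-tailed}), this far part contributes at most $o(1)\,c(x)$ uniformly for $x\in\Theta_\la(p)$ as $\la\to0+$. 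The key point is that $x\in\Theta_\la(p)$ forces $\m{x}\to\infty$ as $\la\to0+$, so the asymptotic regime where \eqref{eq:long-tailed}/\eqref{eq:monotonelong-tailed} applies is exactly the relevant one.

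For the near part $\int_{|z|\leq R}a(z)c(x-z)\,dz$, the idea is to write $c(x-z)=\int_{\Delta(x-z)}b(|y|)\,dy$ and use Fubini to interchange:
\[
\int_{|z|\leq R} a(z)\int_{\Delta(x-z)}b(|y|)\,dy\,dz \leq \int_{\Delta(x-R\mathbf{1})}\Bigl(\int_{\X} a(z)\,dz\Bigr)b(|y|)\,dy \;?
\]
— that crude bound is too lossy, so instead I would argue slice-by-slice: for each fixed $y$ in a suitable enlargement of $\Delta(x)$, the inner integral over $z$ with $x-z\in$ (the set making $y\in\Delta(x-z)$) is dominated, after using $b(|y-z|)\leq b(|y|)(1+o(1))$ from the long-tailed property of $b$ (valid since $\m{x}$, hence $|y|$, is large), by $(a*p)(y)$-type expressions. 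The goal is to bound the near part by $(1+o(1))\int_{\Delta(x)}(a*p)(y)\,dy$ modulo boundary corrections of order $R$ (which are lower-order since $c(x)$ itself diverges in integrated size as $\m{x}\to\infty$... actually $c(x)\to0$, so one must be careful: the boundary strip has $b$-mass controlled by $b(|x|)$ which is $o(c(x))$ only if $c$ decays slower than $b$, which holds because $c(x)\geq$ a tail-integral $\gg b(|x|)$). Then applying the hypothesis \eqref{eq:aconvomegaoveromega} with $\upomega=p$ on $\{p<\la\}$ — and noting that $y\in\Delta(x)$ with $x\in\Theta_\la(p)$ gives $y\in\{p<\la\}$ by the very definition \eqref{eq:setTheta} of $\Theta_\la(p)$ — yields $(a*p)(y)\leq (1+o(1))\,p(y)=(1+o(1))\,b(|y|)$, and integrating over $\Delta(x)$ recovers $(1+o(1))\,c(x)$.

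Combining the near and far estimates gives $\sup_{x\in\Theta_\la(p)}\frac{(a*c)(x)}{c(x)}\leq 1+o(1)$ as $\la\to0+$, which is \eqref{eq:aconvomegaoveromegaanalogue}. The main obstacle I anticipate is making the Fubini/slicing step in the near part rigorous while keeping the boundary-strip error genuinely negligible relative to $c(x)$: one needs a quantitative lower bound $c(x)\geq \mathrm{const}\cdot \m{x}\cdot b(\,\mathrm{const}\cdot\m{x})$ or similar (coming from the fact that $\Delta(x)$ contains a half-line's worth of mass) to absorb the $O(R\cdot b(|x|))$ correction, and one must check this is compatible with the $\Dt$-membership hypotheses on $b$. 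The long-tailed hypotheses on $b$ (and the consequent \eqref{eq:monotonelong-tailed} for $c$) are exactly what make all the "$(1+o(1))$" replacements of $b(|y-z|)/b(|y|)$ and $c(x-z)/c(x)$ legitimate, uniformly over the bounded range $|z|\leq R$.
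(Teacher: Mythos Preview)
Your approach is substantially more complicated than necessary and, more seriously, imports hypotheses that are not part of the statement. The proposition assumes only that $b\in\Dt$ and that \eqref{eq:aconvomegaoveromega} holds with $\upomega=p$; it does \emph{not} assume that $b$ is long-tailed, nor does it assume \eqref{ass:newA5}. Yet your near/far splitting and boundary-strip control rely on \eqref{eq:monotonelong-tailed} (which needs $b$ long-tailed) and on tail control of $a$ via \eqref{ass:newA5}. Without those extra hypotheses, your far-part estimate and your ``$(1+o(1))$'' replacements of $c(x-z)/c(x)$ and $b(|y-z|)/b(|y|)$ are not justified, and the boundary-strip argument you flag as worrisome indeed has no available lower bound on $c(x)$ to lean on.

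The missing idea is a one-line identity that eliminates all of this. Because $\Delta(y)=y+\R_+^d$ is a translate of a fixed cone, the substitution $z\mapsto z+(x-y)$ gives $\int_{\Delta(y)}p(z)\,dz=\int_{\Delta(x)}p\bigl(z-(x-y)\bigr)\,dz$, and then Fubini yields
\[
(a*c)(x)=\int_\X a(x-y)\int_{\Delta(x)}p\bigl(z-(x-y)\bigr)\,dz\,dy=\int_{\Delta(x)}(a*p)(z)\,dz.
\]
Now the hypothesis \eqref{eq:aconvomegaoveromega} with $\upomega=p$ gives, for any $\delta>0$ and all small $\la$, that $(a*p)(z)\leq(1+\delta)\,p(z)$ for every $z\in\{p<\la\}$. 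Since $x\in\Theta_\la(p)$ means precisely $\Delta(x)\subset\{p<\la\}$, you get
\[
\frac{(a*c)(x)}{c(x)}=\frac{1}{c(x)}\int_{\Delta(x)}\frac{(a*p)(z)}{p(z)}\,p(z)\,dz\leq 1+\delta,
\]
uniformly over $x\in\Theta_\la(p)$; letting $\la\to0+$ and then $\delta\to0+$ finishes. No near/far split, no long-tailed assumption, no boundary strips.
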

\begin{proof}
	By~Proposition~\ref{prop:new}, there is no loss of generality in assuming that $b$ is strictly decreasing on $\R_+$.
 Take an arbitrary $\delta\in(0,1)$. By \eqref{eq:aconvomegaoveromega} with $\upomega=p$, there exists $\la_0=\la_0(\delta)$ such that, for all $\la\in(0,\la_0)$, we have 
  \begin{equation}\label{eq:aconvpalphaest}
		\frac{( a *p)(x)}{p(x)}\leq 1+\delta, \quad x\in\{ p<\la\}.  
  \end{equation}
  Next, for any $x\in\X$, one gets, cf.~\eqref{eq:cx3},
  \begin{align*}
    ( a *c)(x)&=\int_\X  a (x-y)\int_{\Delta(y)} p(z)\,dz\,dy
    \\&=\int_\X  a (x-y)\int_{\Delta(x)} p(z-(x-y))\,dz\,dy=\int_{\Delta(x)} ( a *p)(z)\,dz
  \end{align*}
  As a result, by \eqref{eq:aconvpalphaest} and \eqref{eq:setTheta}, we have that, for any $x\in \Theta_\la(p)$,
  \[
  \frac{( a *c)(x)}{c(x)} = \dfrac{1}{c(x)} \int_{\Delta(x)} \frac{( a *p)(z)}{p(z)}p(z)\,dz \leq 1+\delta.
  \]
  Since the latter holds for any $\lambda\in(0,\lambda_0)$, one gets the statement.
\end{proof}

To get from \eqref{eq:aconvomegaoveromegaanalogue} the inequality \eqref{eq:aconvomegaoveromega} with $\upomega=c$, we consider the following statement.

\begin{proposition}\label{prop:ltbetter}
Let $p(x)=b(|x|)$, $x\in\X$, for a long-tailed function $b\in\Dt$.
Let $c$ be given by \eqref{eq:integralc}.
Then there exists $\la_1>0$ such that, for all $\la\in(0,\la_1)$,
\[
	\{c<\la\}\subset \Theta_\la(p).
\]
\end{proposition}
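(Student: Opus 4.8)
The plan is to prove the inclusion directly, by showing that if $c(x)<\la$ then \emph{every} point of $\Delta(x)$ lies in $\{p<\la\}$, which is exactly $\Delta(x)\subset\{p<\la\}$, i.e.\ $x\in\Theta_\la(p)$. First, by Proposition~\ref{prop:new} we may assume without loss of generality that $b$ is strictly decreasing on the whole of $\R_+$: the long-tailed property is asymptotic and hence survives this modification, while for small $\la$ the sets $\{c<\la\}$ and $\Theta_\la(p)$ depend only on the values of $b$ far from the origin (as is readily seen, cf.\ Remark~\ref{re:coolssls}). So fix such a $b$, let $\la>0$ be small (to be specified), take $x\in\X$ with $c(x)=\int_{\Delta(x)}b(|y|)\,dy<\la$, and fix an arbitrary $y\in\Delta(x)$. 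Since $y_j\ge x_j$ for all $j$, we have $\Delta(y)\subseteq\Delta(x)$, whence $\int_{\Delta(y)}b(|w|)\,dw\le c(x)<\la$.

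The core step compares this integral with $b(|y|)$ through a box of fixed size. The cube $Q:=\prod_{j=1}^{d}[y_j,y_j+2]$ is contained in $\Delta(y)$, has Lebesgue measure $2^d\ge 2$, and every $w\in Q$ satisfies $|w|\le |y|+|w-y|\le |y|+2\sqrt d$; since $b$ is decreasing, $b(|w|)\ge b\bigl(|y|+2\sqrt d\bigr)$ on $Q$, so
\[
\la>\int_{\Delta(y)}b(|w|)\,dw\ \ge\ \int_{Q}b(|w|)\,dw\ \ge\ 2^{d}\,b\bigl(|y|+2\sqrt d\bigr),
\]
and therefore $b\bigl(|y|+2\sqrt d\bigr)<2^{-d}\la\le \la/2$.

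It remains to pass from $|y|+2\sqrt d$ back to $|y|$, and this is precisely where the long-tailed hypothesis \eqref{eq:longtaileddef} is used: applied with $\tau=2\sqrt d$ it gives $s_0>0$ such that $b(s+2\sqrt d)\ge\tfrac12 b(s)$ for all $s\ge s_0$. Since $b\in\Dt$, the number $m_0:=\inf_{0\le t\le s_0+2\sqrt d}b(t)$ is positive; put $\la_1:=2m_0$. If $\la<\la_1$ and $|y|<s_0$, then $b\bigl(|y|+2\sqrt d\bigr)\ge m_0=\la_1/2>\la/2$, contradicting the bound just obtained; hence $|y|\ge s_0$, and consequently $b(|y|)\le 2\,b\bigl(|y|+2\sqrt d\bigr)<\la$, i.e.\ $y\in\{p<\la\}$. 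As $y\in\Delta(x)$ was arbitrary, $\Delta(x)\subseteq\{p<\la\}$, so $x\in\Theta_\la(p)$, which proves $\{c<\la\}\subseteq\Theta_\la(p)$ for every $\la\in(0,\la_1)$. The argument presents no serious difficulty; the only point needing a little care is the calibration of $\la_1$ in the last step, ensuring that the smallness of $b\bigl(|y|+2\sqrt d\bigr)$ already forces $|y|$ into the range $[s_0,\infty)$ on which the long-tailed estimate is available.
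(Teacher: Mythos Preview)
Your proof is correct and follows essentially the same approach as the paper's: reduce to strictly decreasing $b$ via Proposition~\ref{prop:new}, integrate over a small cube inside $\Delta(y)$ to bound $b$ at a shifted point by $\la$, and then use the long-tailed property to transfer this to $b(|y|)$. The only minor differences are that you invoke the one-dimensional long-tailed condition \eqref{eq:longtaileddef} directly (with $\tau=2\sqrt d$) rather than passing through Proposition~\ref{prop:rightsidelongtailedmeansspatially}, and you work immediately with an arbitrary $y\in\Delta(x)$ instead of first treating $x$ itself and then extending by monotonicity of $c$; both streamline the argument slightly without changing its substance.
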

\begin{proof}
	By~Proposition~\ref{prop:new}, there is no loss of generality in assuming that $b$ is strictly decreasing on $\R_+$.
By Proposition~\ref{prop:rightsidelongtailedmeansspatially}, we have that \eqref{eq:long-tailed} holds with $c$ replaced by $p$.
As a result, for any $\eps>0$ and $r>0$, there exists $R=R(\eps,r)>0$ such that
  \[
    p(x+y)\geq (1-\eps)p(x), \quad |y|\leq r, \ |x|\geq R.
  \]
	Therefore, $x\in \{c<\la\}$ with $|x|\geq R$ implies that
  \begin{align*}
    \la&\geq \int_{x_1}^{x_1+\frac{r}{\sqrt{d}}} \ldots \int_{x_d}^{x_d+\frac{r}{\sqrt{d}}} b\Bigl(\sqrt{y_1^2+\ldots+y_d^2}\,\Bigr)\,dy_1\ldots dy_d
    \\&\geq \frac{r^d}{d^\frac{d}{2}} p\Bigl(x+\Bigl(\frac{r}{\sqrt{d}},\ldots,\frac{r}{\sqrt{d}}\Bigr)\Bigr)
    \geq \frac{r^d}{d^\frac{d}{2}} (1-\eps) p(x).
  \end{align*}
  Choose now $\eps=\frac{1}{2}$ and $r=2^{\frac{1}{d}}\sqrt{d}>0$, and consider the corresponding $R$.
	Since $\la\downarrow0$ if and only if $\m{x}\to\infty$, there exists $\la_1>0$ such that, for all $\la\in(0,\la_1)$, the inclusion $x\in \{c<\la\}$ implies $\m{x}> R$ and hence $|x|> R$.
	Moreover, for any $y\in\Delta(x)$, we have that $y\in \{c<\la\}$, by the monotonicity of $c$ in each of variables; and also we have that $\m{x}>R$ implies $|y|>R$. As a result, for any $y\in\Delta(x)$ (including $y=x$), we have that $p(y)\leq \la$, i.e $\Delta(x)\subset \{p<\la\}$.
Then, by \eqref{eq:setTheta}, $x\in \Theta_\la(p)$, that proves the statement.
\end{proof}

Combination of Propositions~\ref{prop:newsimple}, \ref{prop:twoinone}, \ref{prop:fromStoN}, \ref{prop:ltbetter} evidently implies Proposition~\ref{prop:convenienttoexplain}.

\subsubsection{\protect Proof of Theorem~\ref{thm:lin:est_above}}

\begin{proof}[\protect Proof of Theorem~\ref{thm:lin:est_above}]
Let $b_1,b_2\in\Dt$ and $b\in\Et $ satisfy the conditions of Theorem~\ref{thm:lin:est_above}, and let $c$ and $c_2$ be constructed by $b$ and $b_2$, respectively (both are defined simultaneously by either \eqref{eq:radialc} or  \eqref{eq:integralc}). By~Proposition~\ref{prop:new}, there is no loss of generality in assuming that all functions $b_1,b_2,b$ are strictly decreasing on $\R_+$. 

By Propositions~\ref{prop:newsimple}--\ref{prop:twoinone}, there exists $\alpha_1\in(0,1)$ such that, for all $\alpha\in[\alpha_1,1]$, $b^\alpha\in\Et $, and for all $\alpha\in(\alpha_1,1)$, the function $\upomega(x)=b(|x|)^\alpha$, $x\in\X$, satisfies \eqref{eq:aconvomegaoveromega}.
Choose any $\alpha_0\in\bigl(\max\bigl\{\alpha_1,\frac34\bigr\},1\bigr)$.
Let $\eps_0=\eps_0(\alpha_0)$ be given by Proposition~\ref{prop:sassaf}.
Take an arbitrary $\eps\in(0,\eps_0)$ and consider $\alpha=\alpha(\eps)\in(\alpha_0,1)$ also given by Proposition~\ref{prop:sassaf}.
Since $\log b(s)\sim \log b_2(s)$, $s\to\infty$, there exists $\rho=\rho(\alpha)=\rho(\eps)>0$ such that
\[
  -\log b_2 (s)\geq -\alpha \log b(s)>0, \quad s>\rho.
\]
Therefore, $b_2 (s)\leq b(s)^\alpha$ for $s>\rho$, and since both functions $b_2$ and $b$ are decreasing and separated from $0$ on $[0,\rho]$, there exists $B>0$ such that $b_2(s)\leq B b(s)^\alpha$, $s\in\R_+$. Let $c_\alpha$ be given by \eqref{eq:defofcalpha}. Then, clearly, $c_2(x)\leq B c_\alpha(x)$, $x\in\X$. As a result, 
\[
\|u_0\|_{c_\alpha}\leq \frac{1}{B}\|u_0\|_{c_2}<\infty.
\]

If $c$ is given by \eqref{eq:radialc}, then
by the assumed, \eqref{eq:aconvomegaoveromega} holds for $\upomega=c_\alpha$ (see \eqref{eq:defofcalpha}).

Let now $c$ be given by \eqref{eq:integralc}. Since $b$ is long-tailed, the function $b^\alpha$ is long-tailed as well. Then, one can use Proposition~\ref{prop:ltbetter} with $p$ replaced by $b^\alpha$; one gets then, for some $\la_1>0$,
  \[
		\{ c_\alpha < \lambda \} \subset \Theta_\la(p^\alpha), \quad \la\in(0,\la_1).
  \]
Therefore, Proposition~\ref{prop:fromStoN} implies that \eqref{eq:aconvomegaoveromega} holds for $\upomega=c_\alpha$.

As a result, one can use now Proposition~\ref{prop:mainabove} with $\upomega=c_\alpha$ and $\eps$ replaced by $\frac{\eps}{2}$. Namely, there exist $A_\eps>0$ and $t_0=t_0(\eps)>0$ such that
\begin{equation}\label{eq:fullineqabove11}
	\esssup_{x\notin\La(t+\frac{\eps t}{2},c_\alpha)} w(x,t)\leq \bigl(A_\eps+ B^{-1} \lVert u_0\rVert_{c_2} \bigr) e^{-\frac{\eps\beta}{4}t}, \quad t\geq t_0.
\end{equation}
On the other hand, by Proposition~\ref{prop:sassaf}, there exists $\tau=\tau(\eps)>t_0$such that \eqref{eq:supset+} holds, i.e.
\begin{equation}\label{eq:supset000}
  \X\setminus\La(t{+}\eps t,c)\subset\X\setminus \La\Bigl(t{+}\frac{\eps t}{2},c_\alpha\Bigr),\quad t\geq \tau.
\end{equation}
Combining \eqref{eq:fullineqabove11} and \eqref{eq:supset000}, one gets 
\eqref{eq:theneedeupperforlin}.
\end{proof}

\subsection{Proofs of lower estimates}\label{subsec:convtotheta}

In this Subsection, we are going to prove Theorem~\ref{thm:bb:est_below}.

Let $c$ given by $b\in\Dt$ be fixed (see Definition~\ref{def:beforeproofs}).
For any $\la>0$, we define the following function, for $x\in\X, t\geq0$,
\begin{align}
	g(x,t)&=g_{c,\eps,\la}(x,t)=\la \min\bigl\{1,c(x)e^{\beta(t{-}\eps t)}\bigr\}
	\label{eq:defofg0}\\&=\la \1_{\La(t{-}\eps t,c)}(x)+\la c(x)e^{\beta(t{-}\eps t)}\1_{\X\setminus\La(t{-}\eps t,c)}(x).\label{eq:defofg}
\end{align}
 
Define, for any $\la>0$, $\eps\in(0,1)$, and $\eta(t)$ given by \eqref{eq:explicitLa},
\begin{equation}\label{eq:hdef}
	f\left( s,t\right):=\lambda \1_{s \leq \eta\left( t{-}\eps t\right)} + \lambda e^{\beta(t{-}\eps t)}b\left( s \right)\1_{s > \eta\left( t{-}\eps t\right) }\in[0,\la], 
\end{equation}
By Definition~\ref{def:super-subexp}, for any $\eps\in(0,1)$ there exists $\tilde{\tau}=\tilde{\tau}(\eps)>0$ such that 
\[ 
	g(x,t)=f(|x|,t), \quad x\in\X,\ t\geq \tilde{\tau}.
\]
\begin{proposition} 
  Let $c=c(x)$ be given by \eqref{eq:radialc} with a long-tailed, tail-log-convex function $b\in\Dt$. Then, for any $\tau>0$, the function $f$ defined by \eqref{eq:hdef} satisfies the limit
  \begin{equation}\label{eq:uniformconvergence}
    \lim_{t\to\infty}\sup_{s\in\R_+}\biggl\lvert\frac{f(s+\tau,t)}{f(s,t)}-1\biggr\rvert=0.
  \end{equation}
\end{proposition}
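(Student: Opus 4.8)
The plan is to deduce \eqref{eq:uniformconvergence} from the long-tailedness of $b$ by a direct case analysis according to the position of $s$ relative to the threshold occurring in \eqref{eq:hdef}. Fix $\tau>0$ (for $\tau=0$ there is nothing to prove) and write $T=T(t):=t-\eps t=(1-\eps)t$ and $\eta=\eta(t):=\eta(T)=b^{-1}\bigl(e^{-\beta T}\bigr)$. I would begin by recording the routine facts that for all $t$ large enough $\eta$ is well defined and lies in the half-line $[\rho,\infty)$ on which $b$ is strictly decreasing to $0$, that $\eta\to\infty$ as $t\to\infty$, and that $b(\eta)=e^{-\beta T}$; from this last identity and \eqref{eq:hdef}, the function $f(\cdot,t)$ is continuous in $s$ at $s=\eta$, takes the value $\lambda$ on $[0,\eta]$, and equals $\lambda e^{\beta T}b(s)\in(0,\lambda)$ for $s>\eta$, where it is strictly decreasing.

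Next, taking $t$ large enough that $\eta>\rho+\tau$, I would split $\R_+=A_t\cup B_t\cup C_t$ with $A_t:=[0,\eta-\tau]$, $B_t:=(\eta-\tau,\eta]$, $C_t:=(\eta,\infty)$, and estimate $\bigl\lvert\frac{f(s+\tau,t)}{f(s,t)}-1\bigr\rvert$ on each piece. On $A_t$ both $s$ and $s+\tau$ lie in $[0,\eta]$, so $f(s+\tau,t)=f(s,t)=\lambda$ and the ratio is identically $1$. On $B_t$ one has $f(s,t)=\lambda$ while $s+\tau\in(\eta,\eta+\tau]$, so $f(s+\tau,t)=\lambda e^{\beta T}b(s+\tau)$ with $b(\eta+\tau)\le b(s+\tau)\le b(\eta)=e^{-\beta T}$ by monotonicity; dividing, $\frac{b(\eta+\tau)}{b(\eta)}\le\frac{f(s+\tau,t)}{f(s,t)}\le 1$ uniformly over $s\in B_t$, and the lower bound tends to $1$ as $\eta\to\infty$ since $b$ is long-tailed. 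On $C_t$ both arguments are $>\eta\ge\rho$, hence in the decreasing range, so the ratio equals $\frac{b(s+\tau)}{b(s)}$ and $\sup_{s>\eta}\bigl\lvert\frac{b(s+\tau)}{b(s)}-1\bigr\rvert\to 0$ as $\eta\to\infty$; this last fact is nothing but the statement that $\frac{b(s+\tau)}{b(s)}\to1$ as $s\to\infty$, i.e.\ long-tailedness of $b$ at the fixed shift $\tau$.

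Combining the three cases gives, for all large $t$,
\[
\sup_{s\in\R_+}\biggl\lvert\frac{f(s+\tau,t)}{f(s,t)}-1\biggr\rvert\le\max\biggl\{1-\frac{b(\eta+\tau)}{b(\eta)},\ \sup_{s>\eta}\Bigl\lvert\frac{b(s+\tau)}{b(s)}-1\Bigr\rvert\biggr\},
\]
and letting $t\to\infty$ (so that $\eta\to\infty$) the right-hand side tends to $0$, which is \eqref{eq:uniformconvergence}. I do not anticipate any real obstacle here; the only points that demand care are choosing $t$ large enough that $\eta>\rho+\tau$ — so that every argument of $b$ that appears falls in the interval where $b$ is positive and strictly decreasing — and the repeated use of the identity $b(\eta)=e^{-\beta T}$ to match the two branches of $f$. (Note that the tail-log-convexity hypothesis is not used in this particular statement: long-tailedness together with membership in $\Dt$ suffices.)
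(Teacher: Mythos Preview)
Your proof is correct and follows essentially the same three-case decomposition as the paper (according to whether $s$ lies in $[0,\eta-\tau]$, $(\eta-\tau,\eta]$, or $(\eta,\infty)$). The one noteworthy difference is in the handling of $C_t$: the paper invokes tail-log-convexity to show that $\frac{b(s+\tau)}{b(s)}$ is nondecreasing in $s$ on $(\rho,\infty)$, hence bounded below by its value at $s=\eta$, which lets them package everything into the single inequality
\[
0\le 1-\frac{f(s+\tau,t)}{f(s,t)}\le \1_{\{s>\eta-\tau\}}\Bigl(1-\frac{b(\eta+\tau)}{b(\eta)}\Bigr).
\]
You instead bound $\sup_{s>\eta}\bigl\lvert\frac{b(s+\tau)}{b(s)}-1\bigr\rvert$ directly from the definition of long-tailedness, which is perfectly valid and, as you point out, shows that the tail-log-convexity hypothesis is actually superfluous for this particular proposition. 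The paper's route buys a slightly cleaner uniform bound; yours buys a weaker hypothesis.
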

\begin{proof}
	Take an arbitrary $\eps\in(0,1)$.
	For an arbitrary fixed $\tau\in\R_+$, redefine $\tilde{\tau}$ such that $\eta(\tilde{\tau}{-}\eps \tilde{\tau})\geq\tau$.
	Then, for any $t\geq \tilde{\tau}$, the function $F_{\tau,t}(s):=\frac{f(s+\tau,t)}{f(s,t)}$ takes the following values.
For $0\leq s\leq \eta(t{-}\eps t)-\tau$, one has $F_{\tau,t}(s)=1$.
For $\eta(t{-}\eps t)-\tau<s\leq \eta(t{-}\eps t)$, we have $F_{\tau,t}(s)=e^{\beta(t{-}\eps t)}b(s+\tau)$ and, since $b$ is decreasing on $[\eta(t{-}\eps t),\infty)$, one gets
\begin{align*}
\frac{b(\eta(t{-}\eps t)+\tau)}{b(\eta(t{-}\eps t))}&=e^{\beta(t{-}\eps t)}b(\eta(t{-}\eps t)+\tau)\\&\leq e^{\beta (t{-}\eps t)}b(s+\tau)
    \leq e^{\beta (t{-}\eps t)}b(\eta(t{-}\eps t))=1.
\end{align*}
Finally, for $s>\eta (t{-}\eps t)$, we have, $F_{\tau,t}(s)=\frac{b(s+\tau)}{b(s)}\leq 1$ (since $b$ is decreasing), 
\[
\frac{b(s+\tau)}{b(s)}\geq \frac{b(\eta(t{-}\eps t)+\tau)}{b(\eta(t{-}\eps t))}.
\]
  As a result, for all $s\in\R_+$,
  \begin{equation}\label{eq:usefulonH}
      0\leq 1- F_{\tau,t}(s)\leq \1_{\{s>\eta(t{-}\eps t)-\tau\}}(s)\biggl(1-\frac{b(\eta(t{-}\eps t)+\tau)}{b(\eta(t{-}\eps t))}\biggr),
  \end{equation}
  that implies the statement because of \eqref{eq:longtaileddef}.
\end{proof}

\begin{proposition}
 Let $c=c(x)$ be given by \eqref{eq:integralc} with a long-tailed, tail-log-convex function $b\in\Dt$. Let $g$ be given by \eqref{eq:defofg}. Then, for any $h\in\R_+^d$,
  \begin{equation}\label{eq:uniformlimitlongtailed}
    \lim_{t\to\infty}\sup_{x\in\X}\biggl\lvert \frac{g(x+h,t)}{g(x,t)}-1\biggr\rvert=0.
  \end{equation}
\end{proposition}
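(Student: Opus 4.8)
The plan is to reduce the claim to the coordinatewise monotonicity of $c$ together with the limit \eqref{eq:monotonelong-tailed} already established for $c$ of the form \eqref{eq:integralc} with a long-tailed $b$. Write $k(t):=e^{-\beta(t-\eps t)}$, so that, by \eqref{eq:defofg0}, $g(x,t)=\la\min\{1,c(x)/k(t)\}$. Since $h\in\R_+^d$ forces $\Delta(x+h)\subseteq\Delta(x)$, we have $c(x+h)\le c(x)$, hence $0\le 1-\frac{g(x+h,t)}{g(x,t)}$ for every $x$ and $t$; the task is to bound this difference from above, uniformly in $x$, by a quantity tending to $0$ as $t\to\infty$.

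First I would split $\X$ according to the positions of $c(x)$ and $c(x+h)$ relative to $k(t)$. If $c(x+h)\ge k(t)$, then also $c(x)\ge k(t)$, so $g(x,t)=g(x+h,t)=\la$ and the difference is $0$. If $c(x)<k(t)$, then $g(x,t)=\la c(x)/k(t)$ and $g(x+h,t)=\la c(x+h)/k(t)$, so the ratio equals $c(x+h)/c(x)$. Finally, if $c(x+h)<k(t)\le c(x)$, then $g(x,t)=\la$ while $g(x+h,t)=\la c(x+h)/k(t)$, and since $k(t)\le c(x)$ the ratio is $c(x+h)/k(t)\ge c(x+h)/c(x)$. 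In every case this yields
\[
0\ \le\ 1-\frac{g(x+h,t)}{g(x,t)}\ \le\ 1-\frac{c(x+h)}{c(x)},
\]
and, crucially, the left-hand side is nonzero only when $c(x+h)<k(t)$.

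Second I would convert the constraint $c(x+h)<k(t)$ into a largeness condition on $\m{x}$. The function $c$ is non-increasing in each coordinate and, for every $R>0$, is bounded below by a positive constant on $\{x\in\X:\m{x}\le R\}$ (the infimum there is attained at the corner $(R,\dots,R)$, where it equals $\int_{[R,\infty)^d}b(|y|)\,dy>0$). Hence, given $\delta>0$, I first use \eqref{eq:monotonelong-tailed} to choose $R>0$ with $1-\frac{c(x+h)}{c(x)}<\delta$ whenever $\m{x}\ge R$; then, setting $m:=\inf\{c(x+h):\m{x}\le R\}>0$ (positive by the boundedness-below property applied with $R+\m{h}$), I choose $\tau=\tau(\delta)$ so large that $k(t)\le m$ for all $t\ge\tau$, which is possible since $k(t)\to0$. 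For such $t$, any $x$ with $\m{x}<R$ satisfies $c(x+h)\ge m\ge k(t)$, so the difference is $0$; and any $x$ with $c(x+h)<k(t)$ must then have $\m{x}\ge R$, so the displayed estimate gives $1-\frac{g(x+h,t)}{g(x,t)}<\delta$. Therefore $\sup_{x\in\X}\bigl\lvert\frac{g(x+h,t)}{g(x,t)}-1\bigr\rvert\le\delta$ for all $t\ge\tau$, and letting $\delta\to0$ proves \eqref{eq:uniformlimitlongtailed}.

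I do not anticipate a genuine obstacle here; the one point requiring care is the uniformity in $x$, and that is exactly what the second step supplies: for large $t$ the set on which the ratio can deviate from $1$ is confined to $\{\m{x}\ge R\}$, where \eqref{eq:monotonelong-tailed} keeps it within $\delta$ of $1$, while on the complementary (coordinate-bounded) region $g(\cdot,t)$ and $g(\cdot+h,t)$ are both identically $\la$.
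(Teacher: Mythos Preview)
Your proof is correct and follows essentially the same route as the paper: the same three-case split yields the key inequality $0\le 1-\frac{g(x+h,t)}{g(x,t)}\le 1-\frac{c(x+h)}{c(x)}$ with nontriviality only on $\{c(\cdot+h)<e^{-\beta(t-\eps t)}\}$, and then the condition $c(x+h)<k(t)$ is turned into $\m{x}\ge R$ via the coordinatewise monotonicity of $c$ together with \eqref{eq:monotonelong-tailed}. The only cosmetic difference is that the paper argues the implication ``$c(y+h)<k(t)\Rightarrow\m{y}>\rho$'' by contradiction, whereas you phrase it via the explicit positive infimum $m=\inf\{c(x+h):\m{x}\le R\}$.
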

\begin{proof}
	Take an arbitrary $x\in\X$, $h\in\R_+^d$ and $t\geq \tilde{\tau}$.
	It is easy to see that $x\in\X\setminus\La(t{-}\eps t,c)$ implies $x+h\in\X\setminus\La(t{-}\eps t,c)$, and hence by monotonicity,
 \[
   \frac{g(x+h,t)}{g(x,t)}=\frac{c(x+h)}{c(x)}\leq 1.
 \]
 Let $x\in\La(t{-}\eps t,c)$. If $x+h\in\La(t{-}\eps t,c)$, then
 $ \frac{g(x+h,t)}{g(x,t)}=1$. Let now $h$ be such that $x+h\in\X\setminus\La(t{-}\eps t,c)$. Then
 \[
	 \frac{g(x+h,t)}{g(x,t)}=e^{\beta(t{-}\eps t)}c(x+h)\leq 1.
 \]
 Moreover, since $x\in\La(t{-}\eps t,c)$ implies $c(x)e^{\beta(t{-}\eps t)}\geq 1$, one has for such $x, h$ the following estimate
 \[
    0\leq 1-\frac{g(x+h,t)}{g(x,t)}\leq 1-\frac{c(x+h)}{c(x)}.
 \]
 As a result,
 \[
 \biggl\lvert \frac{g(x+h,t)}{g(x,t)}-1\biggr\rvert
 =1-\frac{g(x+h,t)}{g(x,t)}
 \leq \sup_{y:c(y+h)< e^{-\beta(t{-}\eps t)}}\biggl(1-\frac{c(y+h)}{c(y)}\biggr).
 \]
 Because of \eqref{eq:monotonelong-tailed}, for the chosen $h\in\R_+^d$ and for an arbitrary $\delta>0$, there exists $\rho=\rho(\delta,h)>0$ such that $\sup\limits_{1\leq j\leq d} y_j>\rho$ implies
 \[
 0\leq 1-\frac{c(y+h)}{c(y)}\leq \delta.
 \]
 Choose now $t_0=t_0(\rho,\eps,h)=t_0(\delta,\eps,h)\geq\tilde{\tau}$ such that
$c\bigl((\rho,\ldots,\rho)+h\bigr)>e^{-\beta(t_0{-}\eps t_0)}$. Prove that then, for any $t\geq t_0$, the inequality
 $c(y+h)\leq e^{-\beta(t{-}\eps t)}$ implies $\sup\limits_{1\leq j\leq d} y_j>\rho$. Indeed, on the contrary, suppose that, for some $t\geq t_0$, the inequality  $c(y+h)\leq e^{-\beta(t{-}\eps t)}$ holds, however, $\sup\limits_{1\leq j\leq d} y_j\leq \rho$. The latter yields
 \[
	 e^{-\beta(t{-}\eps t)}\geq c(y+h)\geq c\bigl((\rho,\ldots,\rho)+h\bigr)>e^{-\beta(t_0{-}\eps t_0)},
 \]
 that contradicts to that $t\geq t_0$. As a result, for all $x\in\X$ and $t>t_0$,
 \[
 \biggl\lvert \frac{g(x+h,t)}{g(x,t)}-1\biggr\rvert
 \leq
 \sup_{y:\sup\limits_{1\leq j\leq d} y_j> \rho}\biggl(1-\frac{c(y+h)}{c(y)}\biggr)
 <\delta,
 \]
 that implies the statement.
\end{proof}

\begin{definition}\label{def:subsollin}
Let \eqref{assum:kappa>m} hold. A function 
\[
	v\in C(\X\times[\tau,\infty)\to\R_+)\cap C^1(\X\times(\tau,\infty)\to\R_+)
\] 
is said to be a {\em sub-solution} to \eqref{eq:majorequation} on $[\tau,\infty)$ for some $\tau\geq0$, if
\begin{equation}\label{eq:Fcalmoperator}
(\F[m]  v)(x,t):=\dt v(x,t) - \ka (a*v)(x,t)+m v(x,t)\leq 0
\end{equation}
for a.a.~$x\in\X$ and for all $t\in[\tau,\infty)$.
\end{definition}

\begin{proposition}\label{prop:bb:subsollin}
Let \eqref{assum:kappa>m} hold and $c=c(x)$ be constructed by a long-tailed, tail-log-convex $b\in\Dt$. Let, for $\eps\in(0,1)$ and $\la>0$, the function $g=g(x,t)$ be given by \eqref{eq:defofg}. For a fixed $\sigma>0$, we define
\begin{equation}\label{eq:Pasha}
  v(x,t)=v_{c,\eps,\la}(x,t):=\frac{1}{\sigma} \int_{t}^{t+\sigma} g(x,s)\,ds.
\end{equation}
Then there exists $\tau_0=\tau_0(\eps)>0$ such that $v$ is a sub-solution to \eqref{eq:majorequation} on $[\tau_0,\infty)$.
\end{proposition}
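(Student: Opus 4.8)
The plan is to reduce the sub-solution inequality $(\F[m]v)(x,t)\le 0$ to a pointwise-in-$s$ inequality for $g$, and then to deduce the latter from a single uniform lower bound on $(a*g)(\cdot,s)/g(\cdot,s)$.

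First I would record the elementary regularity. For fixed $x$ the map $s\mapsto g(x,s)=\la\min\{1,c(x)e^{\beta(1-\eps)s}\}$ is locally Lipschitz, being the composition of the smooth map $s\mapsto c(x)e^{\beta(1-\eps)s}$ with the $\la$-Lipschitz map $r\mapsto\la\min\{1,r\}$; hence it is absolutely continuous in $s$, with $\partial_s g(x,s)=\beta(1-\eps)g(x,s)$ for a.a.\ $s$ with $x\notin\La(s-\eps s,c)$ and $\partial_s g(x,s)=0$ for a.a.\ $s$ with $x\in\La(s-\eps s,c)$. Consequently $v$ in \eqref{eq:Pasha} is nonnegative, continuous, bounded by $\la$, is $C^1$ in $t$ with $\dt v(x,t)=\frac1\sigma\bigl(g(x,t+\sigma)-g(x,t)\bigr)=\frac1\sigma\int_t^{t+\sigma}\partial_s g(x,s)\,ds$, and $(a*v)(x,t)=\frac1\sigma\int_t^{t+\sigma}(a*g)(x,s)\,ds$ by Fubini. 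Therefore
\[
(\F[m]v)(x,t)=\frac1\sigma\int_t^{t+\sigma}(\F[m]g)(x,s)\,ds,\qquad (\F[m]g)(x,s):=\partial_s g(x,s)-\ka(a*g)(x,s)+m\,g(x,s),
\]
so it suffices to show $(\F[m]g)(x,s)\le 0$ for a.a.\ $s\ge\tau_0$ and all $x\in\X$.

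The key step is the uniform estimate: for every $\delta\in(0,1)$ there is $\tau_1=\tau_1(\delta)>0$ such that $(a*g)(x,s)\ge(1-\delta)g(x,s)$ for all $x\in\X$ and $s\ge\tau_1$. Granting this with $\delta:=\beta\eps/\ka\in(0,1)$ and using the two formulas for $\partial_s g$ together with $m-\ka=-\beta$: on $\X\setminus\La(s-\eps s,c)$ one gets $(\F[m]g)(x,s)\le\bigl(\beta(1-\eps)+m-\ka(1-\delta)\bigr)g(x,s)=(\ka\delta-\beta\eps)g(x,s)=0$; on $\La(s-\eps s,c)$ one gets $(\F[m]g)(x,s)\le(m-\ka+\ka\delta)\la=(\ka\delta-\beta)\la<0$. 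Setting $\tau_0:=\tau_1(\beta\eps/\ka)$ then yields $(\F[m]v)(x,t)\le 0$ for all $t\ge\tau_0$, which is the assertion.

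The main obstacle is the uniform convolution estimate, and I would prove it by exploiting that $c$ is spatially long-tailed. By Proposition~\ref{prop:rightsidelongtailedmeansspatially} in case \eqref{eq:radialc}, resp.\ by \eqref{eq:monotonelong-tailed} in case \eqref{eq:integralc}, for any $r>0$ and $\delta'\in(0,1)$ there is $R=R(\delta',r)$ with $c(x-z)\ge(1-\delta')c(x)$ uniformly for $|z|\le r$ once $|x|\ge R$ (resp.\ $\m{x}\ge R$). A short case check on whether $c(x)e^{\beta(1-\eps)s}\le 1$ or $>1$ shows that $Q\ge(1-\delta')P$ forces $\min\{1,Q\}\ge(1-\delta')\min\{1,P\}$, hence $g(x-z,s)\ge(1-\delta')g(x,s)$ for $|z|\le r$, uniformly in $s$, once $|x|$ (resp.\ $\m{x}$) is large; choosing $r$ with $\int_{|z|\le r}a(z)\,dz\ge 1-\delta'$ and integrating gives $(a*g)(x,s)\ge(1-\delta')^2 g(x,s)\ge(1-\delta)g(x,s)$ for such $x$, for every $s$. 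On the remaining bounded $x$-region, $c$ is bounded below by a positive constant on every relevant compact set (for \eqref{eq:integralc} this uses $\Delta(x-z)\supset\Delta(x')$ for suitable large $x'$), so $\La(s-\eps s,c)$ contains a ball in $x$ of any prescribed radius around such $x$ once $s$ is large; there $g(\cdot,s)\equiv\la$, so taking the radius large enough to carry $1-\delta$ of the mass of $a$ gives $(a*g)(x,s)\ge(1-\delta)\la=(1-\delta)g(x,s)$. Combining the two regions and enlarging $\tau_1$ to cover both yields the estimate. Finally, the kink of $g$ in $t$ is harmless, as it is absorbed by the averaging in \eqref{eq:Pasha}, and any continuity issue for $b$ near the origin can be removed via Proposition~\ref{prop:new} (tail-log-convexity already gives continuity of $b$ away from $0$).
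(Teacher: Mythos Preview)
Your proof is correct and follows the same opening reduction as the paper: average out the kink so that $(\F[m]v)=\frac1\sigma\int_t^{t+\sigma}(\F[m]g)\,ds$, bound $\partial_s g\le\beta(1-\eps)g$, and reduce everything to a uniform lower bound $(a*g)(\cdot,s)\ge(1-\delta)g(\cdot,s)$ for large $s$. The difference is in how that lower bound is established. The paper first proves, as separate propositions, that $\sup_{x}\lvert g(x+h,t)/g(x,t)-1\rvert\to0$ as $t\to\infty$ (namely \eqref{eq:uniformconvergence} in the radial case and \eqref{eq:uniformlimitlongtailed} in the monotone case); in the radial case this relies on tail-log-convexity of $b$, which makes the ratio $b(s+\tau)/b(s)$ monotone in $s$ and pins the worst case across the kink of $f(\cdot,t)$ to $s=\eta((1-\eps)t)$. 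You bypass this by a spatial split: on $\{|x|\ge R\}$ (resp.\ $\{\m{x}\ge R\}$) the spatial long-tailed property of $c$ together with the elementary implication $Q\ge(1-\delta')P\Rightarrow\min\{1,Q\}\ge(1-\delta')\min\{1,P\}$ lifts the bound from $c$ to $g$ for every $s$, while on the complement $g\equiv\la$ for large $s$ and the estimate is trivial. Your route is a bit more elementary and in fact never uses tail-log-convexity; the paper's route isolates a clean uniform statement about $g$ that has independent interest. One small point worth a sentence: in case \eqref{eq:integralc} you invoke \eqref{eq:monotonelong-tailed}, which is stated pointwise in $h\in\R_+^d$, but you use it uniformly for $|h|\le r$; this uniformity is indeed readable from the paper's proof of \eqref{eq:monotonelong-tailed} (the final bound there depends only on $\m{h}$), so the step is valid.
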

\begin{proof}
Firstly, note that
\begin{align*}
\frac{\partial }{\partial t}v(x,t)&=\frac{1}{\sigma}\bigl(g(x,t+\sigma)-g(x,t)\bigr)=
\frac{1}{\sigma} \int_{t}^{t+\sigma} \frac{\partial }{\partial s}g(x,s)\,ds,\\
(a*v)(x,t)&=\frac{1}{\sigma} \int_{t}^{t+\sigma} (a*g)(x,s)\,ds,
\end{align*}
and hence
\[
  (\F[m] v)(x,t)=\frac{1}{\sigma} \int_{t}^{t+\sigma} (\F[m] g)(x,s)\,ds.
\]
Therefore, since the mapping $t\mapsto v(\cdot,t)\in E$ is continuously differentiable for $t>\tilde{\tau}$, to prove that $v$ is a sub-solution to \eqref{eq:majorequation} it is enough to show that $(\F[m] g)(x,s)\leq 0$ for a.a. $x\in\X$ and a.a. $t>\tilde{\tau}$.

We have
  \begin{align}\notag
		\frac{\partial }{\partial t}g\left( x,t\right) &=\lambda (\beta{-}\eps\beta) e^{\beta(t{-}\eps t)}b \left( \lvert x\rvert \right) \1_{\lvert x\rvert > \eta\left( t{-}\eps t\right) }\\ 
		&=(\beta-\eps \beta) g(x,t)\1_{\lvert x\rvert > \left( t{-}\eps t\right) }\leq (\beta-\eps\beta) g(x,t).\label{eq:estsubsol111}
\end{align}
Therefore, by \eqref{eq:Fcalmoperator}, \eqref{eq:estsubsol111},
  \begin{equation}
     -(\F[m]  g)  \geq \ka a* g-m g -\beta(1-\eps) g
     =\ka a* g-\ka g +\beta \eps g.\label{eq:estsubsol222}
  \end{equation}
 To find now an appropriate bound from below for $Lg=\ka a* g-\ka g$, cf. \eqref{eq:Markovgenerator}, consider two cases separately.

 1. Let $c$ by given by \eqref{eq:radialc}. 
  Since $f$ given by \eqref{eq:hdef} is decreasing in its first coordinate, we have
  \begin{align}
   \ka  ( a * g)(x,t) &=  \ka \int_\X a (-y) g(x+y,t)dy =  \ka \int_\X a (-y) f(|x+y|,t)dy\notag\\&\geq  \ka \int_\X a (-y) f(|x|+|y|,t)dy
     =  \ka \int_\X a (y) f(|x|+|y|,t)dy\nonumber\\&=
     \ka  g(x,t) \int_\X a (y)\frac{f(|x|+|y|,t)}{f(|x|,t)}dy,\label{eq:useqw}
       \end{align}
      for a.a.~$x\in\X$. Note that, by~\eqref{eq:usefulonH},
     \begin{equation}\label{eq:sdwqrfaswqfa}
			 0<\frac{f(|x|+|y|,t)}{f(|x|,t)}\leq1, \quad x,y\in\X,\ t\geq \tilde{\tau}.
     \end{equation}
     By \eqref{eq:useqw}, \eqref{eq:sdwqrfaswqfa}, and $\int_{\X} a(x) dx=1$, we have, cf.~\eqref{eq:estsubsol222},
     \begin{equation*}
     \ka  ( a * g)(x,t)-\ka  g(x,t)\geq -\ka  g(x,t) \int_\X a (y)\biggl\lvert \frac{f(|x|+|y|,t)}{f(|x|,t)}-1\biggr\rvert dy.
     \end{equation*}
Next, by \eqref{eq:uniformconvergence}, \eqref{eq:sdwqrfaswqfa}, and the dominated convergence theorem, one gets
      \begin{equation*}
        \lim_{t\to\infty} \int_\X a (y)\sup_{x\in\X}\biggl\lvert \frac{f(|x|+|y|,t)}{f(|x|,t)}-1\biggr\rvert dy=0.
      \end{equation*}
Therefore, for any $\delta\in(0,1)$ (small enough later), there exists
a $\tau_0\geq \tilde{\tau}$ such that, for all $t\geq \tau_0$ and for a.a. $x\in\X$,
  \begin{equation*}
     \ka  ( a * g)(x,t)-\ka  g(x,t)\geq -\ka  \delta g(x,t).
\end{equation*}
As a result, by \eqref{eq:estsubsol222}, 
\begin{equation*}
-\F[m] g\geq -\ka  \delta g +\beta \eps g\geq0,
\end{equation*}
if only $\delta < \dfrac{\beta \eps}{\ka}$. The proof, for $c$ given by \eqref{eq:radialc}, is fulfilled.

	2. Let $c$ be given by \eqref{eq:integralc}. Denote, for any $y\in\X$,
  \[
      y^+:=\bigl(|y_1|,\ldots,|y_d|\bigr)\in\R_+^d.
  \]
  Since the function $c$ is decreasing along all basis directions, we easily get that the function $g$ given by \eqref{eq:defofg} has the same  property (in $x$). Therefore, since $y_j\leq y^+_j$, $j=1,\ldots,d$, one gets
  \[
  g(x+y,t)\geq g(x+y^+,t).
  \]
Therefore, we will have, instead of \eqref{eq:useqw},
  \begin{align*}
    \ka ( a *g)(x,t)&= \ka \int_\X a (-y)g(x+y,t)\,dy
  \geq \ka \int_\X a (-y)g(x+y^+,t)\,dy
    \\&=\ka g(x,t) \int_\X a (y)\biggl(\frac{g(x+y^+,t)}{g(x,t)}-1\biggr)\,dy+ \ka g(x,t)\int_\X a (y)\,dy
  \end{align*}
  Taking into account \eqref{eq:uniformlimitlongtailed} for $h=y^+$, the rest of the proof is fully analogous to the first part.
\end{proof}

\begin{definition}
A function $w:\X\times\R_+\to\R_+$ is said to be a {\em sub-solution} to \eqref{eq:basicequation} on $[\tau,\infty)$ for some $\tau\geq0$, if
$(\F  w)(x,t)\leq 0$ for a.a.~$x\in\X$ and for all $t\in[\tau,\infty)$, where $\F $ is given by, cf. \eqref{eq:Fcalmoperator},
\begin{equation}\label{eq:Fcaloperator}
(\F  u)(x,t):=\dt u(x,t) - \ka (a*u)(x,t)+m u(x,t)+u(x,t)(Gu)(x,t).	
\end{equation}
\end{definition}

The proof of the following statement follows directly from Theorem~\ref{thm:comparison}.
\begin{proposition}\label{prop:subsolismajorant}
Let \eqref{assum:kappa>m}--\eqref{assum:sufficient_for_comparison}  hold. Let $0\leq u\leq\theta$ be a solution to \eqref{eq:basicequation}, and $v:\X\times\R_+\to\R_+$ be  a sub-solution to \eqref{eq:basicequation} on $[\tau,\infty)$ for some $\tau\geq0$. Suppose that, for some $t_0,t_1\geq \tau$, we have $u(x,t_0)\geq v(x,t_1)$ for a.a.~$x\in\X$. Then 
\[
u(x,t+t_0)\geq v(x,t+t_1), \quad x\in\X, \ t\geq0.
\]
\end{proposition}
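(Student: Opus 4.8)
The plan is to reduce the statement to a single application of the comparison Theorem~\ref{thm:comparison}. Fix an arbitrary $T>0$ and set, for $(x,t)\in\X\times[0,T]$,
\[
  u_1(x,t):=v(x,t+t_1), \qquad u_2(x,t):=u(x,t+t_0).
\]
The first step is to check the regularity required by Theorem~\ref{thm:comparison}. Since $u$ is a classical solution to \eqref{eq:basicequation} and $t_0\ge\tau\ge0$, the shifted function $u_2$ is continuous on $\X\times[0,T]$ and continuously differentiable in $t$ on $(0,T]$ (the relevant times $t+t_0$ lie in $(0,\infty)$). Likewise, $u_1$ is a time-shift of the sub-solution $v$, hence inherits the same continuity and differentiability on the interval $[0,T]+t_1\subset[\tau,\infty)$.

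The second step is to verify the three pointwise hypotheses. The operator $\F$ in \eqref{eq:Fcaloperator} involves only the $t$-derivative and the convolution in $x$, and $G$ acts in $x$ only, so $\F$ commutes with translations in time; therefore $(\F u_1)(x,t)=(\F v)(x,t+t_1)\le0$ for $t\in(0,T]$ (here $t+t_1\ge\tau$ because $t_1\ge\tau$), while $(\F u_2)(x,t)=(\F u)(x,t+t_0)=0$ because $u$ solves \eqref{eq:basicequation}. Expanding $\F$ via \eqref{eq:Fcaloperator}, this is precisely the differential inequality
\[
  \frac{\partial u_1}{\partial t} - \ka a*u_1 +mu_1 +u_1Gu_1 \le \frac{\partial u_2}{\partial t} - \ka a*u_2 +mu_2 +u_2Gu_2
\]
on $\X\times(0,T]$. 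The sign conditions are immediate: $u_1\ge0$ since $v$ is $\R_+$-valued, and $u_2\le\theta$ since $0\le u\le\theta$. Finally, the initial data are ordered: $u_1(x,0)=v(x,t_1)$, $u_2(x,0)=u(x,t_0)$, so the hypothesis $u(x,t_0)\ge v(x,t_1)$ gives $0\le u_1(x,0)\le u_2(x,0)\le\theta$ for a.a.\ $x\in\X$.

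With all hypotheses of Theorem~\ref{thm:comparison} in place, that theorem yields $u_1(x,t)\le u_2(x,t)$ on $\X\times[0,T]$, i.e.\ $v(x,t+t_1)\le u(x,t+t_0)$ for all $t\in[0,T]$. Since $T>0$ was arbitrary, the inequality holds for all $t\ge0$, which is the assertion. There is no real obstacle in this argument; the only point that needs a little care is the time-shift bookkeeping, namely that $t_1\ge\tau$ (respectively $t_0\ge\tau\ge0$) ensures that $v$ remains a sub-solution (respectively $u$ remains a classical solution) on the shifted intervals, so that Theorem~\ref{thm:comparison} is applicable on each $[0,T]$.
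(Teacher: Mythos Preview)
Your proof is correct and is exactly what the paper intends: the paper's own proof is the single sentence ``follows directly from Theorem~\ref{thm:comparison}'', and you have simply spelled out that direct application (time-shift both functions, verify the sign, ordering, and regularity hypotheses, then let $T\to\infty$). There is no difference in approach.
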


We are going to find now, using the continuity of $G$ at $0$ (on $E^+$, cf.~\eqref{assum:Glipschitz})  and Proposition~\ref{prop:bb:subsollin}, sufficient conditions to have \eqref{eq:defofg} as a sub-solution to \eqref{eq:basicequation} as well.

\begin{proposition}\label{prop:bb:subsol}
Let \eqref{assum:kappa>m}--\eqref{assum:sufficient_for_comparison}  hold and $c=c(x)$ be constructed by a long-tailed, tail-log-convex function $b\in\Dt$. Then, for any $\eps\in(0,1)$, there exist $\la_0=\la_0(\eps)>0$ and $\tau_0=\tau_0(\eps)>0$,  such that, for any $\la\in[0,\la_0]$, 
the function $v=v(x,t)$, given by \eqref{eq:Pasha} and \eqref{eq:defofg},
is a sub-solution to \eqref{eq:basicequation} on $[\tau_0,\infty)$.
\end{proposition}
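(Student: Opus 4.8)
The plan is to deduce the claim from Proposition~\ref{prop:bb:subsollin} combined with the continuity of $G$ at $0$ on $E^+$. By \eqref{eq:Fcaloperator} and \eqref{eq:Fcalmoperator} one has $(\F v)(x,t)=(\F[m] v)(x,t)+v(x,t)(Gv)(x,t)$, so it is enough to exhibit $\tau_0=\tau_0(\eps)>0$ and $\la_0=\la_0(\eps)>0$ such that, for every $\la\in[0,\la_0]$, the function $v$ from \eqref{eq:Pasha}--\eqref{eq:defofg} satisfies
\[
v(x,t)(Gv)(x,t)\leq-(\F[m] v)(x,t),\qquad x\in\X,\ t\geq\tau_0.
\]
For $\la=0$ we have $v\equiv0$ and there is nothing to prove, so assume $\la>0$ henceforth.

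First I would revisit the proof of Proposition~\ref{prop:bb:subsollin}. Specialising there the free parameter to $\delta:=\frac{\beta\eps}{2\ka}$ and combining the resulting convolution estimate with \eqref{eq:estsubsol222} and $g\geq0$, one obtains a $\tau_0=\tau_0(\eps)>0$ --- \emph{independent of $\la$}, since the quotients controlled via \eqref{eq:uniformconvergence} and \eqref{eq:uniformlimitlongtailed} do not involve $\la$ --- for which
\[
-(\F[m] g)(x,s)\geq\tfrac{\beta\eps}{2}\,g(x,s),\qquad x\in\X,\ s\geq\tau_0.
\]
Integrating over $s\in[t,t+\sigma]$ and using the identity $(\F[m] v)(x,t)=\frac1\sigma\int_t^{t+\sigma}(\F[m] g)(x,s)\,ds$ already established in that proof, one gets the quantitative bound $-(\F[m] v)(x,t)\geq\frac{\beta\eps}{2}\,v(x,t)$ for all $x\in\X$ and $t\geq\tau_0$.

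It then remains to dominate $v(Gv)$ by this surplus. From \eqref{eq:defofg0}, $g(x,s)=\la\min\{1,c(x)e^{\beta(s-\eps s)}\}\in[0,\la]$, so $0\leq v(x,t)\leq\la$ for all $x,t$; hence, if $\la\leq\theta$, then $v(\cdot,t)\in E_\theta^+$, and by \eqref{assum:Gpositive} (which gives $G0=0$) together with \eqref{assum:Glipschitz},
\[
0\leq(Gv)(x,t)\leq\|(Gv)(\cdot,t)\|=\|(Gv)(\cdot,t)-G0\|\leq l_\theta\|v(\cdot,t)\|\leq l_\theta\la
\]
for a.a.~$x\in\X$ and all $t\geq0$. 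Thus $v(x,t)(Gv)(x,t)\leq l_\theta\la\,v(x,t)$, and imposing in addition $l_\theta\la\leq\frac{\beta\eps}{2}$ gives $v(Gv)\leq\frac{\beta\eps}{2}v\leq-(\F[m] v)$ on $[\tau_0,\infty)$, i.e. $\F v\leq0$ there. Hence $\la_0:=\min\bigl\{\theta,\frac{\beta\eps}{2l_\theta}\bigr\}$ does the job.

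The argument is mostly bookkeeping on top of Proposition~\ref{prop:bb:subsollin}; the only point that needs care is that one cannot merely quote that proposition (which asserts only $\F[m] v\leq0$) but must extract from its proof the quantitative bound $-\F[m] v\geq\frac{\beta\eps}{2}v$, and must check that both this surplus and the threshold $\tau_0$ are uniform in $\la\in[0,\la_0]$ --- which is exactly why $\delta$ is fixed as a function of $\eps$ alone and why the factor $\la$ cancels in all the ratios appearing in the proof of Proposition~\ref{prop:bb:subsollin}.
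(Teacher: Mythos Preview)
Your proof is correct and rests on the same idea as the paper's: absorb the nonlinear term $v\,Gv$ into the slack left over from the linear sub-solution estimate of Proposition~\ref{prop:bb:subsollin}. The packaging, however, differs. You reopen the proof of Proposition~\ref{prop:bb:subsollin}, fix $\delta=\tfrac{\beta\eps}{2\ka}$ there, and extract the quantitative surplus $-\F[m]v\geq\tfrac{\beta\eps}{2}v$; then the explicit Lipschitz bound $Gv\leq l_\theta\la$ lets you choose $\la_0=\min\{\theta,\tfrac{\beta\eps}{2l_\theta}\}$. The paper instead applies Proposition~\ref{prop:bb:subsollin} as a black box, but with $m$ replaced by $m+\delta$ (for a $\delta\in(0,\eps\beta)$) and $\eps$ replaced by $\eps_1$ chosen so that $(\beta-\delta)(1-\eps_1)=\beta(1-\eps)$; this reparametrization leaves $g$ and $v$ unchanged while yielding $\F[m+\delta]v\leq0$, which is exactly your inequality $-\F[m]v\geq\delta v$ in disguise. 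It then uses only the continuity of $G$ at $0$ (not the explicit constant $l_\theta$) to get $Gv<\delta$ for small $\la$, and concludes $\F v=\F[m+\delta]v-\delta v+v\,Gv\leq0$. Your route makes the origin of the surplus more visible and gives an explicit $\la_0$; the paper's route has the virtue of not reopening a proof already closed. Your observation that $\tau_0$ is independent of $\la$ (because $\la$ cancels in all the ratios in \eqref{eq:uniformconvergence}--\eqref{eq:uniformlimitlongtailed}) is needed in either version and is implicit in the paper's argument as well.
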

\begin{proof}
Take an arbitrary $\eps\in(0,1)$. For any $\delta\in(0,\eps\beta)$, one has that $m+\delta<m+\beta=\ka$; hence one can apply Proposition~\ref{prop:bb:subsollin} to the equation \eqref{eq:majorequation} with $m$ replaced by $m+\delta$. More precisely, we choose $\eps_1\in(0,1)$ to ensure that
\begin{equation}\label{eq:dsafqwe3qr325}
(\ka-(m+\delta))(1-\eps_1)=(\ka-m)(1-\eps),
\end{equation}
namely, $\eps_1:=\dfrac{\beta\eps-\delta}{\beta-\delta}$. Then, by \eqref{eq:dsafqwe3qr325} and Proposition~\ref{prop:bb:subsollin}, there exists $\tau_0=\tau_0(\eps_1)=\tau_0(\eps)$ such that
\begin{equation}\label{eq:asdsadwqerwrq}
-\F[m+\delta] v(x,t)\geq 0, \quad t\geq\tau_0,
\end{equation}
where $\F[m+\delta]$ is given by \eqref{eq:Fcalmoperator}.

Next, by \eqref{assum:kappa>m}--\eqref{assum:Glipschitz}, there exists $\la_0=\la_0(\delta)=\la_0(\eps)>0$ such that $0\leq v\leq \la_0$, $v\in E$, implies
\begin{equation}\label{eq:estsubsol10}
0\leq Gv<\delta.
\end{equation}
Clearly, \eqref{eq:defofg0} yields that $0\leq v(x,t)\leq \la$, $x\in\X$, $t\in\R_+$. Then, by \eqref{eq:Fcaloperator},
\eqref{eq:Fcalmoperator},  \eqref{eq:asdsadwqerwrq}, \eqref{eq:estsubsol10}
we have, for any $\la\in[0,\la_0]$ and for any $t\geq\tau_0$,
  \[
     -\F  v = -\F[m]v -v Gv =-\F[m+\delta]v+\delta v-v Gv
      \geq0.
  \]
The statement is proved.
\end{proof}

Now we are ready to prove Theorem~\ref{thm:bb:est_below}.

\begin{proof}[\protect Proof of Theorem~\ref{thm:bb:est_below}] 
Recall that, by Theorem~\ref{thm:existandcompared},  $0\leq u_0\leq \theta$ implies $0\leq u(\cdot,t)\leq \theta$ for $t>0$; and then, by \eqref{assum:Gpositive}, $Gu\leq\beta$. Rewrite \eqref{eq:basicequation} in the form \eqref{eq:RDequation} with $F$ given by \eqref{eq:FthroughG}, then, by \eqref{eq:propertiesofF}, $Fu\geq0$.
Therefore, for all $t>0$ and a.a.~$x\in\X$,
\begin{align*}
u(x,t)&=e^{-\ka t}u_0(x)+\ka\int_0^t e^{-\ka( t-s)}  (a*u)(x,s)ds+\int_0^t e^{-\ka (t-s)} (Fu)(x,s) ds\\&\geq e^{-\ka t}u_0(x)+\ka\int_0^t e^{-\ka( t-s)}(a*u)(x,s)ds.
\end{align*}
The same inequality for $u(x,s)$ implies
\begin{align}
u(x,t)&\geq \ka\int_0^t e^{-\ka( t-s)}      (a*u)(x,s)ds\geq
\ka\int_0^t e^{-\ka( t-s)}  e^{-\ka s}(a*u_0)(x)ds\notag        \\&=
\ka t e^{-\ka t}(a*u_0)(x)\geq \ka t    e^{-\ka t}c(x),\label{eq:coolest}
\end{align}
for all $t\geq0$ and a.a.~$x\in\X$, because of \eqref{eq:condonparambelow}.

Fix an arbitrary $\eps\in(0,1)$. Take any $\delta\in(0,\eps)$ and consider $\la_0=\la_0(\delta)>0$ and 
 $\tau_0=\tau_0(\delta)> \sigma$,
both given by~Proposition~\ref{prop:bb:subsol}.
Set now
\[
	\la:=\min\bigl\{\la_0, \ka \tau_0 e^{-(\ka+(\beta{-}\delta\beta)) \tau_0}\bigr\}.
\]
Then, by \eqref{eq:coolest} and \eqref{eq:defofg0}, we have, for a.a.~$x\in\X$,
\begin{equation}\label{eq:onemoreeq}
	u(x,\tau_0)\geq \la e^{(\beta{-}\delta\beta) \tau_0}c(x)\geq
\la \min\bigl\{e^{(\beta-\delta\beta) \tau_0}c(x),1\bigr\}=
g_{c,\delta,\la}(x,\tau_0).
\end{equation}
Next, the function $g_{c,\delta,\la}(x,t)$ is non-decreasing in $t$, hence \eqref{eq:Pasha} yields
\begin{equation}\label{eq:coool}
 g_{c,\delta,\la}(x,t)\leq v_{c,\delta,\la}(x,t)\leq g_{c,\delta,\la}(x,t+\sigma),
\end{equation}
and hence we can continue \eqref{eq:onemoreeq} as follows:
\[
  u(x,\tau_0)\geq v_{c,\delta,\la}(x,\tau_0- \sigma), \quad \text{for a.a. } x\in\X.
\]
Therefore, by Propositions~\ref{prop:bb:subsol} and \ref{prop:subsolismajorant}, one gets, for any $\tau\geq0$ and a.a. $x\in\X$
\begin{equation*}
  u(x,\tau_0+\tau)\geq v_{c,\delta,\la}(x,\tau_0- \sigma +\tau)\geq g_{c,\delta,\la}(x,\tau_0- \sigma +\tau),
\end{equation*}
where the latter inequality is because of \eqref{eq:coool}.
As a result,
\begin{equation}\label{eq:ineqdopevident}
  u(x,\tau_0+\sigma+\tau)\geq \la \quad  \text{for a.a.} \ x\in\La((1-\delta)(\tau_0+\tau),c), \quad \tau\geq0.
\end{equation}

By Proposition~\ref{prop:new}, without loss of generality we may assume that $c$ is given by a strictly decreasing on $\R_+$ function. We will distinguish two cases.

1. Let $c$ be given by \eqref{eq:radialc}. Fix $\tau\geq0$. Since \eqref{eq:explicitLa} holds, we have that the set
\begin{align*}
	\widetilde{\Lambda}:=&\{y\in\X: B_1(y)\subset\La((1-\delta)(\tau_0+\tau),c)\} \\
		=&\bigl\{y\in\X: B_1(y)\subset B_{\eta((1-\delta)(\tau_0+\tau),b)}(0)\bigr\}
\end{align*}
is nothing but $B_{\eta_\delta^-(\tau_0+\tau,b)-1}(0)$ and, moreover,
\begin{equation}\label{eq:evidentunion}
  \La((1-\delta)(\tau_0+\tau),c)=\bigcup_{y\in \widetilde{\Lambda}} B_1(y).
\end{equation}
Take and fix now an arbitrary $y\in\widetilde{\La}$, i.e. $|y|\leq \eta((1-\delta)(\tau_0+\tau))-1$.
Then, by~\eqref{eq:ineqdopevident},
\[
u(x,\tau_0+\sigma+\tau) \geq \la \1_{B_1(y)}(x) \quad \text{for a.a. } x\in\X.
\]
Consider now equation \eqref{eq:basicequation} with the initial condition $v_0(x)= u(x,\tau_0+\sigma+\tau)$, $x\in\X$; let $v(x,t)$ be the corresponding solution to \eqref{eq:basicequation}.
By the uniqueness in Theorem~\ref{thm:existandcompared}, $v(x,t)= u(x,\tau_0+\sigma+\tau+t)$, $t\in\R_+$.

Take an arbitrary $\mu\in(0,\theta)$. Apply Theorem~\ref{thm:hairtrigger} to the solution $v$ and $K=B_1(y)$; then there exists $t_\mu\geq 1$ such that $v(x,t)\geq \mu$ for a.a. $x\in B_1(y)$.
As a result, 
\begin{equation}\label{eq:ineqonemore}
u(x,\tau_0+\sigma+t_\mu+\tau)
\geq \mu,
\end{equation}
for all $\tau\geq0$ and a.a.~$x\in B_1(y)$. 
Stress that $t_\mu$ does not depend on a $y$ with $|y|\leq \eta((1-\delta)(\tau_0+\tau))-1$.
As a result, by \eqref{eq:evidentunion} for any $\delta\in(0,1)$ and $\mu\in (0,\theta)$, there exist $\la_0=\la_0(\delta)>0$, $\tau_0=\tau_0(\delta)>0$, and $t_\mu\geq 1$ such that, for all $\tau\geq 0$ and for a.a.~$x$ with $|x|\leq \eta((1-\delta)(\tau_0+\tau))$, the inequality \eqref{eq:ineqonemore} holds.

By the definition of $\eta$ (see \eqref{eq:explicitLa}), one gets that there exists $\tau_1\geq0$ such that, for all $\tau\geq \tau_1$,
\[
\eta((1-\eps)(\tau+\tau_0+\sigma+t_\mu))
\leq\eta((1-\delta)(\tau+\tau_0)),
\]
i.e. \eqref{eq:ineqonemore} holds for all $\tau\geq \tau_1$ and a.a.~$x$ with $|x|\leq \eta((1-\eps)(\tau+\tau_0+\sigma+t_\mu))$. Since $\mu\in(0,\theta)$ was arbitrary, the latter fact yields \eqref{eq:whatweareproving1}.

2. Let now $c$ be given by \eqref{eq:integralc}. Consider the norm on $\X$ given by
\[
|x|_\infty:=\lvert(x_1,\ldots,x_d)\rvert_\infty:=\max_{1\leq j\leq d}|x_j|.
\]
Let $\widetilde{B}_{\frac{1}{2}}(x)$ denote the ball with the center at an $x\in\X$ and the radius $\frac{1}{2}$ w.r.t.~the $|\cdot|_\infty$-norm. Then, clearly,
\[
\widetilde{B}_{\frac{1}{2}}(x)=\bigtimes_{j=1}^d\Bigl[x_j-\frac12,x_j+\frac12\Bigr]
=\bigtimes_{j=1}^d\bigl[y_j-1,y_j\bigr]=:C_1(y),
\]
where $y_j=x_j+\frac{1}{2}$, $1\leq j\leq d$.
For $y\in \La((1-\delta)(\tau_0+\tau),c)$,
\[
C_1(y)\subset\La((1-\delta)(\tau_0+\tau),c).
\]
Therefore, cf.~\eqref{eq:evidentunion},
\[
 \La((1-\delta)(\tau_0+\tau),c) =\bigcup_{y\in  \La((1-\delta)(\tau_0+\tau),c)} C_1(y).
\]
Hence, one can just repeat the previous proof, applying Theorem~\ref{thm:hairtrigger} to the solution $v$ and $K=C_1(y)$ with $y\in\La((1-\delta)(\tau_0+\tau),c)$.
\end{proof}

\subsection{Proofs of general results}\label{subsec:corandex}
We are going to prove the main Theorem~\ref{thm:combined}. 
Consider separately proofs for the items 1(a)--(b) and 2(a)--(b).

\begin{proof}[Proof of Theorem~\ref{thm:combined}, item 1(a)]
Let $\eps_0\in(0,1)$ be chosen later. Take an arbitrary $\eps\in(0,\eps_0)$.

Let $c_+ \in L^1(\X)$ be constructed by a long-tailed, tail-log-convex function $b_+\in\Dt$.
Note that \eqref{eq:asdsdps} yields $u_0\in L_1(\X)$.
Therefore, one can apply Proposition~\ref{prop:liminfbelow} with $c=c_+>0$ and $f=u_0$; namely, there exists $D>0$ such that
  $ a *u_0\geq c_+*u_0\geq D c_+$. Then, by Theorem~\ref{thm:bb:est_below}, the convergence \eqref{eq:whatweareproving1} holds, with $\eps$ replaced by $\frac{\eps}{2}<\eps_0$ and $c$ replaced by $Dc_+$. Since the functions $Db_+$ and $b$ are also log-equivalent, one can apply Proposition~\ref{prop:wehavethis} with $b_1=b$ and $b_2=Db_+$, to get inclusion
	$\La(t{-}\eps t,c)\subset\La(t{-}\frac{\eps t}{2},Dc_+)$. As~a~result, \eqref{eq:whatweareproving1} holds, with $c(x)=b(|x|)$, $x\in\X$. Note that we had not any restrictions on $\eps_0$ here.

	Since $b^+\in\Dt$, we can apply Theorem~\ref{thm:lin:est_above} with $b_1=b_2=b^+$ and the given $b\in\Et $. Indeed, \eqref{condB1} implies \eqref{ass:newA5}, and, for the $c_2$ constructed by $b^+$ and satisfying \eqref{eq:radialc}, \eqref{eq:asdsdps} is just \eqref{eq:u0leqb2}. Therefore, \eqref{eq:theneedeupperforlin} holds, that, we recall, implies \eqref{eq:whatweareproving2} because of \eqref{eq:majorisedbylinear}.
  \end{proof}

\begin{proof}[Proof of Theorem~\ref{thm:combined}, item 1(b)]
The proof of \eqref{eq:whatweareproving1} is essentially the same as that for the item 1(a), with only the  difference that we will apply now Proposition~\ref{prop:liminfbelow} for $c=v_\circ>0$ and $f= a \in L^1(\X)$.
Next, since $b^\circ\in\Dt$ and \eqref{condB4} holds, we can apply Theorem~\ref{thm:lin:est_above} with $b_1=b_2=b^\circ$.
\end{proof}

\begin{proof}[Proof of Theorem~\ref{thm:combined}, item 2(a)]
Let $\eps_0\in(0,1)$ be chosen later. Take an arbitrary $\eps\in(0,\eps_0)$. 
By~\eqref{condB1} and \eqref{eq:hevisidelb}, we have
   \begin{align*}
( a *u_0)(x)&\geq \zeta \int_\X b_+(|y|)\1_{\R_-^d}(x-y)\,dy\notag
\\&=\zeta \int_{\Delta(x)} b_+(|y|)\,dy=:\tilde{c}(x), \quad x\in\X.
\end{align*}
One can apply Theorem~\ref{thm:bb:est_below} 
  to get \eqref{eq:whatweareproving1} with $c$ replaced by $\tilde{c}$ and $\eps$ replaced by $\frac{\eps}{2}$. Since the functions $b$ and $\zeta b_+$ are log-equivalent, one can apply Proposition~\ref{prop:wehavethis} with  $c^{(1)}(x)=c(x):=\int_{\Delta(x)} b(|y|)dy$ and $c^{(2)}(x)=\tilde{c}(x)$, $x\in\X$; and then \eqref{eq:supsetlog-} leads to \eqref{eq:whatweareproving1} for this $c$.

	To get \eqref{eq:whatweareproving2} we will need just to repeat all corresponding arguments from the proof of the item 1(a) with only the difference that Theorem~\ref{thm:lin:est_above}  will be applied now for functions satisfying \eqref{eq:integralc}.
\end{proof}

\begin{remark}
Using \cite[Proposition~5.4 (Q2)]{FT2017a} and modifying accordingly the proof of  Theorem~\ref{thm:bb:est_below}, one can replace $\R_-^d$ in \eqref{eq:hevisidelb} by $\bigtimes\limits_{j=1}^d(-\infty,\overline{y}_j]$, for an arbitrary fixed $\overline{y}\in\X$.
\end{remark}

\begin{remark}
If, additionally, $u_0(x)=\int_{\Delta(x)}p(y)dy$, $x\in\X$ for some $p\in L^1(\X)$, then, evidently, 
\[
\sup\limits_{x\in\X} \dfrac{p(x)}{ a (x)}<\infty\quad \Longrightarrow \quad 
\sup\limits_{x\in\X}\dfrac{u_0(x)}{\displaystyle\int_{\Delta(x)} a (y)dy}<\infty.
\]
\end{remark}

\begin{proof}[Proof of Theorem~\ref{thm:combined}, item 2(b)]
  First,
  we apply Proposition~\ref{prop:liminfbelow} with $f= a $ and $c$ replaced by $b_\circ$. Then, similarly to the proof of the item~2(a), we may apply Theorem~\ref{thm:bb:est_below} to get \eqref{eq:whatweareproving1} with $c$ replaced by $v_\circ$ and $\eps$ replaced by $\frac{\eps}{2}$, and, by using the log-equivalence between $b$ and $b_\circ$ and Proposition~\ref{prop:wehavethis}, we will get \eqref{eq:whatweareproving1} for the required $c$.

  To get \eqref{eq:whatweareproving2}, one can use the same arguments as in the proof of the item~1(b).
\end{proof}

\appendix
\renewcommand{\theequation}{\thesection{p}.\arabic{equation}}
\section{Appendix}

\begin{proof}[Proof of Lemma~\ref{le:GisOK}]
Firstly, we note that \eqref{eq:sepfromzerobeta} implies \eqref{assum:a_nodeg} with $\rho=\frac{\varrho}{\max\{\ka,1\}}$. Let $G$ be defined by \eqref{eq:defGbyF}, i.e., for $0\leq u\in E$ and $x\in\X$,
  \[
    (Gu)(x) = \beta - \alpha \frac{f\bigl(u(x)\bigr)}{u(x)} - (1-\alpha) \beta \Bigl(1-\frac{(a^-*u)(x)}{\theta}\Bigr)^k, 
  \]
where $\frac{f(s)}{s}:=\beta$ for $s=0$. Then it is straightforward to check that \eqref{assum:Gpositive}--\eqref{assum:Glipschitz} and \eqref{assum:G_locally_continuous}--\eqref{assum:G_increas_on_const} hold.
We are going to prove that there exists $p\geq0$ such that, for any $v,w\in E_\theta^+$ with $v\leq w$, 
\begin{align}\notag
p(w-v) &+ \ka a*(w-v)\\
& \geq (w-v)Gw + v(Gw-Gv)  + \varrho \1_{B_\varrho(0)}*(w-v). \label{eq:reinfdop}
\end{align}
Note that \eqref{eq:reinfdop} evidently implies \eqref{assum:sufficient_for_comparison}.
Next, \eqref{assum:improved_sufficient_for_comparison} will follow from \eqref{eq:reinfdop} if we choose any $\delta<\varrho$ with $\ka \delta<\varrho$ and any $b\in C^\infty(\X)\cap L^\infty(\X)$, such that
$\ka a -\varrho \1_{B_\varrho(0)} \leq \ka b \leq \ka a -\ka \delta\1_{B_\delta(0)}$.

By  \eqref{eq:coninlocalcase}, there exists a Lipschitz constant $K>0$, such that
\begin{align}
Gw-Gv&=\alpha \Bigr(\frac{f(v)}{v}-\frac{f(w)}{w}\Bigr)+ 
(1-\alpha)\beta \biggl[\Bigl(1-\frac{a^-*v}{\theta} \Bigr)^k - \Bigl(1-\frac{a^-*w}{\theta} \Bigr)^k \biggr]\notag\\
&\leq \alpha K (w-v)+(1- \alpha)\beta k\frac{a^-*(w-v)}{\theta},\label{eq:keepit}
\end{align}
where we used an elementary inequality $q^k-r^k\leq k(q-r)$ for $0\leq r\leq q\leq 1$. Multiplying both parts of \eqref{eq:keepit} on $0\leq v\leq\theta$ and using \eqref{eq:sepfromzerobeta}, we get
\[
  v(Gw-Gv)\leq \alpha \theta K (w-v)+\ka a*(w-v) - \varrho \1_{B_\varrho(0)}*(w-v).
\]
Finally, by \eqref{assum:Gpositive}, $(w-v)Gw\leq \beta(w-v)$, and therefore, the inequality \eqref{eq:reinfdop} holds with $p:= \beta+\alpha \theta K>0$.
\end{proof}

\begin{lemma}\label{le:almostlinear}
Let $\la>1$ and let $b:\R_+\to\R_+$ be defined, for large $s$, as follows
\[
        b(s)=\exp\Bigl(-\frac{s}{(\log s)^\lambda}\Bigr).
\]
Let $\beta>0$, and define, for large $t$, the function $\eta(t):=b^{-1}\bigl(e^{-\beta t}\bigr)$. Then
\begin{equation}\label{eq:lambertetc}
         \eta(t)\sim \beta t (\log t)^\lambda, \quad t\to\infty.
    \end{equation}
\end{lemma}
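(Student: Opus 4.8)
The plan is to work directly with the equation defining $\eta$, bootstrapping the logarithm. First I would record that the function $s\mapsto s(\log s)^{-\lambda}$ has derivative $(\log s)^{-\lambda}\bigl(1-\lambda(\log s)^{-1}\bigr)$, which is strictly positive once $\log s>\lambda$; hence $b(s)=\exp\bigl(-s(\log s)^{-\lambda}\bigr)$ is strictly decreasing to $0$ on some half-line $[s_0,\infty)$, so $b^{-1}$ is well defined near $0$ and, since $e^{-\beta t}\to0$ as $t\to\infty$, the value $\eta(t)=b^{-1}\bigl(e^{-\beta t}\bigr)$ is defined for all large $t$ and satisfies $\eta(t)\to\infty$. (That the formula for $b$ is only prescribed for large $s$ is harmless, since the statement is purely asymptotic.)

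Next, from $b(\eta(t))=e^{-\beta t}$ I obtain the identity
\[
\frac{\eta(t)}{(\log\eta(t))^\lambda}=\beta t,\qquad t\ \text{large},
\]
equivalently $\eta(t)=\beta t\,(\log\eta(t))^\lambda$. Taking logarithms gives
\[
\log\eta(t)=\log(\beta t)+\lambda\log\log\eta(t).
\]
Because $\eta(t)\to\infty$ we have $\log\log\eta(t)=o\bigl(\log\eta(t)\bigr)$, so dividing the last display by $\log\eta(t)$ yields $1-\lambda\,\dfrac{\log\log\eta(t)}{\log\eta(t)}=\dfrac{\log(\beta t)}{\log\eta(t)}$, whence
\[
\log\eta(t)\sim\log(\beta t)\sim\log t,\qquad t\to\infty .
\]

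Finally I substitute this back into $\eta(t)=\beta t\,(\log\eta(t))^\lambda$, writing
\[
\eta(t)=\beta t\,(\log t)^\lambda\left(\frac{\log\eta(t)}{\log t}\right)^{\!\lambda},
\]
and using $\log\eta(t)/\log t\to1$ to conclude $\eta(t)\sim\beta t\,(\log t)^\lambda$, which is \eqref{eq:lambertetc}. There is no real obstacle here; the only points needing a word of care are the eventual monotonicity of $b$ (so that $b^{-1}$ makes sense and $\eta(t)\to\infty$) and the elementary estimate $\log\log\eta(t)=o(\log\eta(t))$ driving the bootstrap. One could alternatively express $\log\eta(t)$ through a branch of the Lambert $W$ function and read off the asymptotics from its expansion at $0$, but the direct iteration above is shorter and works uniformly in $\lambda>1$.
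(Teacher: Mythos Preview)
Your proof is correct and self-contained. The approach differs from the paper's: the paper substitutes $s=e^{\tau}$ in the equation $s(\log s)^{-\lambda}=\beta t$, rewrites it as $-\tfrac{\tau}{\lambda}e^{-\tau/\lambda}=-\lambda^{-1}(\beta t)^{-1/\lambda}$, and solves explicitly through the lower real branch $W_{-1}$ of the Lambert $W$ function, obtaining $\eta(t)=\lambda^{\lambda}\beta t\bigl(-W_{-1}\bigl(-\lambda^{-1}(\beta t)^{-1/\lambda}\bigr)\bigr)^{\lambda}$ and then invoking the known asymptotic $W_{-1}(\nu)\sim\log(-\nu)$ as $\nu\to0^-$. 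Your direct bootstrap---taking logarithms in $\eta(t)=\beta t(\log\eta(t))^{\lambda}$, using $\log\log\eta(t)=o(\log\eta(t))$ to get $\log\eta(t)\sim\log t$, and feeding this back---avoids any reference to special functions and is shorter. The paper's route has the advantage of an exact closed-form representation, which would make higher-order terms easy to extract if one wanted them; yours has the advantage of being entirely elementary. You also supply the monotonicity check for $b$ that the paper leaves implicit. (Incidentally, your final remark anticipates exactly what the paper does.)
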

\begin{proof}
The equation $b(s)=e^{-\beta t}$ yields $s(\log s)^{-\lambda}=\beta t$. Making substitution $s=e^{\tau}$, one easily gets
   \[
   -\frac{\tau}{\lambda} e^{-\frac{\tau}{\lambda}}=-\frac{1}{\lambda(\beta t)^{\frac{1}{\lambda}}}.
   \]
   Since $s>e^\lambda$ implies $-\frac{\tau}{\lambda}<-1$ and assuming $t$ big enough, to ensure that $-\frac{1}{\lambda(\beta t)^{\frac{1}{\lambda}}}>-\frac{1}{e}$, one has that the solution to the latter equation can be given in terms of the negative real branch $W_{-1}$ of Lambert W-function, that is the function such that $W_{-1}(\nu)\exp(W_{-1}(\nu))=\nu$, $W_{-1}(\nu)<-1$, $\nu\in (-e^{-1},0)$. Namely, one gets
   $
   -\frac{\tau}{\lambda}=W_{-1}\bigl(-\lambda^{-1}(\beta t)^{-\frac{1}{\lambda}}\bigr),
   $
   and, therefore
   \[
   \eta(t)=\exp\biggl( -\lambda W_{-1}\Bigl(-\frac{1}{\lambda(\beta t)^{\frac{1}{\lambda}}}\Bigr)  \biggr).
   \]
   However, $\exp(-W_{-1}(\nu)) =\nu^{-1}W_{-1}(\nu)$, therefore,
   \[
   \exp(-\lambda W_{-1}(\nu)) =(-\nu)^{-\lambda}(-W_{-1}(\nu))^\lambda,
   \]
   i.e.
   \[
   \eta(t)=\lambda^\lambda \beta t\biggl( -W_{-1}\Bigl(-\frac{1}{\lambda(\beta t)^{\frac{1}{\lambda}}}\Bigr)  \biggr)^\lambda, \quad t>\frac{1}{\beta}\Bigl(\frac{e}{\lambda}\Bigr)^\lambda.
   \]
   It is well-known that $W_{-1}(\nu)\sim \log(-\nu)$, $\nu\to0-$. This yields \eqref{eq:lambertetc}.
\end{proof}

\begin{lemma}\label{le:2dim}
Let a function $X(t)\to\infty$, $t\to\infty$, be such that, for large $t$,
\begin{equation}\label{eq:defetaC2}
\int_{X(t)}^\infty\int_{X(t)}^\infty b(|y|)\,dy_1\,dy_2=e^{-\beta t}, \qquad |y|=\sqrt{y_1^2+y_2^2},
\end{equation}
where $\beta>0$ and $b:\R_+\to\R_+$ is a decreasing at $\infty$ function, such that $\int_{\R_+} b(r)r\,dr<\infty$. Consider the following functions 
\begin{equation}\label{eq:cxmut}
  c(x):=\frac{\pi}{2}\int_{\sqrt{2}x}^\infty b(r)r\,dr, \qquad
  \mu(t):=c^{-1}\bigl(e^{-\beta t}\bigr)
\end{equation}
for large $x$ and $t$. 
Then, for any $\eps\in(0,1)$ and large $t$,
\[
  \mu(t)\geq X(t) \geq \frac{1}{2}\mu(t-\eps t). 
\]
\end{lemma}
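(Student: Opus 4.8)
The plan is to sandwich the quadrant integral in \eqref{eq:defetaC2} between integrals over two circular sectors, evaluate each of those in polar coordinates in terms of the function $c$ from \eqref{eq:cxmut}, and then invert $c$. I would first record two preliminary facts. Since $b$ is nonnegative and decreasing at $\infty$, if it vanished at some point of its decreasing range it would vanish on a whole tail, making the left-hand side of \eqref{eq:defetaC2} zero once $X(t)$ is large — impossible as $X(t)\to\infty$; hence $b>0$ on a neighbourhood of $+\infty$, so $c$ is strictly decreasing and invertible there because $c(x_1)-c(x_2)=\frac\pi2\int_{\sqrt2 x_1}^{\sqrt2 x_2}b(r)\,r\,dr>0$ for $x_1<x_2$ in that range. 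Also $\mu(t-\eps t)\to\infty$ since $(1-\eps)t\to\infty$, so the `large $x$, large $t$' prescriptions in \eqref{eq:cxmut} are eventually in force.

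For the upper bound $\mu(t)\ge X(t)$: if $y_1\ge X(t)$ and $y_2\ge X(t)$ then $y$ lies in the first quadrant and $|y|\ge\sqrt2\,X(t)$, so $[X(t),\infty)^2$ is contained in the sector $\{y:|y|\ge\sqrt2\,X(t),\ 0\le\arg y\le\frac\pi2\}$. Using $b\ge0$ and polar coordinates,
\[
e^{-\beta t}\ \le\ \int_{\{|y|\ge\sqrt2 X(t),\,0\le\arg y\le\frac\pi2\}}b(|y|)\,dy\ =\ \frac\pi2\int_{\sqrt2 X(t)}^\infty b(r)\,r\,dr\ =\ c\bigl(X(t)\bigr),
\]
so $c(X(t))\ge e^{-\beta t}=c(\mu(t))$, and applying $c^{-1}$ gives $X(t)\le\mu(t)$ for large $t$.

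For the lower bound $X(t)\ge\frac12\mu(t-\eps t)$: I would fix $\delta\in(0,\frac\pi4)$ small enough that $\cos\delta-\sin\delta>\frac12$, and for $R>0$ consider the sector $S_R:=\{y:|y|\ge R,\ \frac\pi4-\delta\le\arg y\le\frac\pi4+\delta\}$. For $y\in S_R$ one has $\min\{y_1,y_2\}\ge R\cos(\frac\pi4+\delta)$, hence $S_R\subset[X(t),\infty)^2$ once $R\ge X(t)/\cos(\frac\pi4+\delta)$; using the identity $\sqrt2\cos(\frac\pi4+\delta)=\cos\delta-\sin\delta$, taking $R:=X(t)/\cos(\frac\pi4+\delta)$ and again $b\ge0$ with polar coordinates,
\[
e^{-\beta t}\ \ge\ \int_{S_R}b(|y|)\,dy\ =\ 2\delta\int_{R}^\infty b(r)\,r\,dr\ =\ \frac{4\delta}{\pi}\,c\Bigl(\frac{X(t)}{\cos\delta-\sin\delta}\Bigr).
\]
Thus $c\bigl(X(t)/(\cos\delta-\sin\delta)\bigr)\le\frac{\pi}{4\delta}\,e^{-\beta t}$; since $e^{\beta\eps t}\to\infty$ the right-hand side is $\le e^{-\beta(1-\eps)t}=c\bigl(\mu(t-\eps t)\bigr)$ for all large $t$, and applying $c^{-1}$ yields $X(t)\ge(\cos\delta-\sin\delta)\,\mu(t-\eps t)\ge\frac12\,\mu(t-\eps t)$.

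I do not expect a genuine obstacle: the geometric inclusions and the polar-coordinate evaluations are elementary; the only points requiring care are confirming invertibility of $c$ on the relevant large range and that $X(t)$ and $\mu((1-\eps)t)$ both diverge, both handled by the preliminary remarks. The presence of the factor $\frac12$ and of the slack $\eps$ in the statement is structural and unavoidable with this method: the inner sector $S_R$ has angular aperture $2\delta<\frac\pi2$, which produces the constant $\frac{\pi}{4\delta}>1$ absorbed by the gain $e^{\beta\eps t}$, while its diagonal tilt contributes the shrink factor $\cos\delta-\sin\delta<1$, which is exactly what the $\frac12$ accommodates — one cannot let $\delta\to0$ because then $\frac{\pi}{4\delta}\to\infty$.
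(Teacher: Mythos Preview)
Your proof is correct and follows essentially the same approach as the paper: both convert to polar coordinates and sandwich the quadrant integral between sector integrals, obtaining the upper bound from the quarter-disc containing the quadrant and the lower bound by restricting to a sub-sector (the paper truncates radially to $r\ge(\sqrt2+\delta)X(t)$ and bounds the angular factor $\tfrac\pi2-2\arcsin\tfrac{X(t)}{r}$ below, while you take a diagonal wedge of half-angle $\delta$), then absorb the resulting constant into the $e^{\beta\eps t}$ slack and the geometric shrink into the factor $\tfrac12$. The two arguments are different parameterizations of the same idea.
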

\begin{proof}
Rewriting the set $\{(y_1,y_2)\in\R^2\mid y_1\geq X(t), y_2\geq X(t)\}$ for $X(t)>0$ in polar coordinates, we obtain from \eqref{eq:defetaC2} that, for large $t$,
\begin{align*}
 e^{-\beta t}&=\int_{\sqrt{2}X(t)}^\infty\int_{\arcsin\frac{X(t)}{r}}^{\arccos\frac{X(t)}{r}} b(r)r\,dr
  =\int_{\sqrt{2}X(t)}^\infty\biggl(\frac{\pi}{2}-2\arcsin\frac{X(t)}{r}\biggr) b(r)r\,dr\\
  &=X(t)^2\int_{\sqrt{2}}^\infty\biggl(\frac{\pi}{2}-2\arcsin\frac{1}{s}\biggr) b\bigl(X(t)s\bigr)\,s\,ds.
\end{align*}
Therefore, for any $\delta>0$,
\begin{align}
c\bigl( X(t)\bigr)\geq e^{-\beta t}&\geq X(t)^2\int_{\sqrt{2}+\delta}^\infty\biggl(\frac{\pi}{2}-2\arcsin\frac{1}{s}\biggr) b\bigl(X(t)s\bigr)\,s\,ds\notag\\
&\geq f(\delta) X(t)^2 \int_{\sqrt{2}+\delta}^\infty b\bigl(X(t)s\bigr)\,s\,ds=\frac{2}{\pi} f(\delta) c\biggl(\frac{\sqrt{2}+\delta}{\sqrt{2}}X(t) \biggr),\label{eq:ssfarsaddsasa}
\end{align}
where 
\[
  f(\delta):=\frac{\pi}{2}-2\arcsin\frac{1}{\sqrt{2}+\delta}\in\Bigl(0,\frac{\pi}{2}\Bigr), \quad \delta>0,
\]
is an increasing function. Since $c(x)$ is decreasing, we obtain from \eqref{eq:ssfarsaddsasa} that
\begin{equation}\label{eq:sassdwwee2}
c^{-1}(e^{-\beta t})\geq X(t) \geq \frac{\sqrt{2}}{\sqrt{2}+\delta}c^{-1}\biggl(\frac{\pi}{2f(\delta)}e^{-\beta t}\biggr).
\end{equation}
Set $\la=\frac{\sqrt{2}}{\sqrt{2}-1}>1$. Choose $\delta>0$ such that
\[
  f(\delta)=\frac{\pi}{2\la}<\frac{\pi}{2},
\]
then 
\[
  \frac{\sqrt{2}}{\sqrt{2}+\delta}=\sqrt{2}\sin\biggl(\frac{\pi}{4}\Bigl(1-\frac{1}{\la}\Bigr)\biggr)>\frac{1}{\sqrt{2}}\Bigl(1-\frac{1}{\la}\Bigr)=\frac{1}{2},
\]
where we used the inequality $\sin x>\frac{2}{\pi}x$ for $0<x<\frac{\pi}{2}$. Then \eqref{eq:sassdwwee2} implies
\[
  \mu(t)\geq X(t) \geq \frac{1}{2}c^{-1}\bigl(\la e^{-\beta t}\bigr).
\]
Take finally an $\eps\in(0,1)$ and assume that $t$ is big enough to ensure that $e^{\eps\beta t}>\la$. Since $c^{-1}(x)$ is a decreasing function, one gets the statement.
\end{proof}

\begin{remark}\label{rem:forweibR2ex}
Let \eqref{eq:weibexmon1}--\eqref{eq:weibexmon2} holds. Then, by Theorem~\ref{thm:combined}, \eqref{eq:whatweareproving1}--\eqref{eq:whatweareproving2} hold with $\La(t)=\La(t,c)$ given by \eqref{eq:defLa} where $c(x_1,x_2)=\int_{x_1}^\infty\int_{x_2}^\infty b(|y|)\,dy_1\,dy_2$, cf. \eqref{eq:frontinR2}, and $b\in\Et$ is log-equivalent to $e^{-\sqrt{s}}$, $s>0$. Take $b(s)=\frac{1}{\pi} s^{-\frac{3}{2}}e^{-\sqrt{s}}$ for large $s$. By~\cite[Corollary 3.1]{FT2017b}, $b\in\Et$. Let $X(t):=X_1(t)=X_2(t)$ describe the motion of the boundary of $\La(t)$ in the diagonal direction in \eqref{eq:frontinR2}. Then, by Lemma~\ref{le:2dim}, we have, cf.~\eqref{eq:cxmut},
\[
  c(x)=\frac{\pi}{2}\frac{1}{\pi} \int_{\sqrt{2}x}^\infty \frac{1}{\sqrt{r}}\exp\bigl(-\sqrt{r}\bigr)\,dr
  =\exp\bigl(-\sqrt[4]{2}\sqrt{x}\bigr).
\]
Then, by \eqref{eq:cxmut}, $\mu(t)=\frac{\beta^2}{\sqrt{2}}t^2$. Therefore, by Lemma~\ref{le:2dim}, for any $\eps\in(0,1)$ and large $t$, \eqref{eq:weibestforeta} holds.
\end{remark}

% \section*{Acknowledgements}
% Financial supports by the DFG through CRC 701, Research Group ``Stochastic
% Dynamics: Mathematical Theory and Applications'', and by the European
% Commission under the project STREVCOMS PIRSES-2013-612669 are gratefully acknowledged.
 
\newpage

\def\cprime{$'$}

\end{document}